\DeclareMathOperator*{\tend}{\longrightarrow}
\DeclareMathOperator*{\D}{\rm{div}}
\theoremstyle{definition}
\newtheorem{defi}{Definition}[section]
\newtheorem{rmk}[defi]{Remark}
\theoremstyle{plane}
\newtheorem{thm}[defi]{Theorem}
\newtheorem{prop}[defi]{Proposition}
\newtheorem{cor}[defi]{Corollary}
\newtheorem{lemma}[defi]{Lemma}
\newcommand{\tbf}{\textbf}
\newcommand{\tsl}{\textsl}
\newcommand{\mbb}{\mathbb}
\newcommand{\mc}{\mathcal}
\newcommand{\mf}{\mathfrak}
\newcommand{\mds}{\mathds}
\newcommand{\veps}{\varepsilon}
\newcommand{\eps}{\veps}
\newcommand{\what}{\widehat}
\newcommand{\wtilde}{\widetilde}
\newcommand{\vphi}{\varphi}
\newcommand{\oline}{\overline}
\newcommand{\ra}{\rightarrow}
\newcommand{\g}{\gamma}
\renewcommand{\k}{\kappa}
\newcommand{\s}{\sigma}
\renewcommand{\t}{\tau}
\newcommand{\z}{\zeta}
\newcommand{\lam}{\lambda}
\newcommand{\de}{\delta}
\newcommand{\lan}{\langle}
\newcommand{\ran}{\rangle}
\newcommand{\R}{\mathbb{R}}
\newcommand{\C}{\mathbb{C}}
\newcommand{\N}{\mathbb{N}}
\newcommand{\Z}{\mathbb{Z}}
\newcommand{\T}{\mathbb{T}}
\renewcommand{\P}{\mathbb{P}}
\renewcommand{\L}{\mc{L}}
\newcommand{\E}{\mc{E}}
\renewcommand{\H}{\mc{H}}
\renewcommand{\div}{{\rm div}\,}
\newcommand{\curl}{{\rm curl}\,}
\newcommand{\Id}{{\rm Id}\,}
\newcommand{\Supp}{{\rm Supp}\,}
\newcommand{\dx}{ \, {\rm d} x}
\newcommand{\dt}{ \, {\rm d} t}
\newcommand{\B}{B^s_{\infty, r}}
\newcommand{\al}{\alpha}
\newcommand{\bt}{\beta}
\def\d{\partial}
\def\div{{\rm div}\,}
\begin{document}

\newcommand{\dimitri}[1]{\textcolor{red}{[***DC: #1 ***]}}
\newcommand{\fra}[1]{\textcolor{blue}{[***FF: #1 ***]}}

\title{\textsc{\Large{\textbf{Els\"asser formulation of the ideal MHD \\ and
improved lifespan in two space dimensions}}}}

\author{\normalsize\textsl{Dimitri Cobb}$\,^1\qquad$ and $\qquad$
\textsl{Francesco Fanelli}$\,^{2}$ \vspace{.5cm} \\
\footnotesize{$\,^{1,} \,^2\;$ \textsc{Universit\'e de Lyon, Universit\'e Claude Bernard Lyon 1}}  \vspace{.1cm} \\
{\footnotesize \it Institut Camille Jordan -- UMR 5208}  \vspace{.1cm}\\
{\footnotesize 43 blvd. du 11 novembre 1918, F-69622 Villeurbanne cedex, FRANCE} \vspace{.2cm} \\
\footnotesize{$\,^{1}\;$\ttfamily{cobb@math.univ-lyon1.fr}}, $\;$
\footnotesize{$\,^{2}\;$\ttfamily{fanelli@math.univ-lyon1.fr}}
\vspace{.2cm}
}

\date\today

\maketitle

\subsubsection*{Abstract}
{\footnotesize In the present paper, we show an improved lower bound for the lifespan of the solutions to the ideal MHD equations in the case of space dimension $d=2$.
In particular, for small initial magnetic fields $b_0$ of size (say) $\veps>0$, the lifespan $T_\veps>0$ of the corresponding solution goes to $+\infty$ in the limit $\veps\ra0^+$.

Such a result does not follow from standard quasi-linear hyperbolic theory.
For proving it, 
three are the crucial ingredients: first of all, to work in endpoint Besov spaces $B^s_{\infty,r}$, under the condition $s>1$ and
$r\in[1,+\infty]$ or $s=r=1$; moreover, to use the Els\"asser formulation of the ideal MHD, recasted in its vorticity formulation; finally, to take advantage of the special structure
of the non-linear terms.

We also rigorously establish the equivalence between the original formulation of the ideal MHD and its Els\"asser formulation for a large class of weak solutions. The construction
of explicit counterexamples shows the sharpness of our assumptions. Related non-uniqueness issues are discussed as well.

}

\paragraph*{\small 2010 Mathematics Subject Classification:}{\footnotesize 35Q35 
(primary);
76W05, 
76B03, 
35L60 
(secondary).}

\paragraph*{\small Keywords: }{\footnotesize ideal MHD; Els\"asser variables; vorticity; critical regularity; improved lifespan in $2$-D.}

{\footnotesize
\tableofcontents
}


\section{Introduction} \label{s:intro}

In this paper, we are concerned with the well-posedness issue for the following system of PDEs, which we set in $\R_+\times\R^d$, with $d\geq2$ (the case of the flat torus $\T^d$ may be covered
with minor modifications):
\begin{equation}\label{i_eq:MHD-I}
\begin{cases}
\partial_t R + \D \big( R u \big) = 0\\[1ex]
\partial_t u + \D(u \otimes u - b \otimes b) + R \mathfrak{C}u + \nabla \left( \Pi + \dfrac{1}{2} |b|^2 \right) = 0\\[1ex]
\partial_t b + \D(u \otimes b - b \otimes u) = 0\\[1ex]
\D(u) = \div(b) = 0\,.
\end{cases}
\end{equation}
The previous system shares evident similarities with the \emph{ideal MHD system}, which we will recall in a while and which can be recovered from \eqref{i_eq:MHD-I} by setting $R\equiv0$.
Here and in what follows, ``ideal'' always means non-viscous and non-resistive.

Equations \eqref{i_eq:MHD-I} have been rigorously derived in \cite{Cobb-F_Rig} in the case of space dimension $d=2$. 
In certain physical regimes, they give an accurate description of the dynamics of slightly non-homogeneous ideal magnetofluids,
whose motion is subject to large forces depending linearly on the density variations and on the velocity field. 
See Subsection \ref{ss:i_qh} for more details.

The vector fields $u=u(t,x)\in\R^d$ and $b=b(t,x)\in\R^d$ stand for, respectively, the velocity field of the fluid and the self-generated magnetic field acting on the flow.
The scalar function $\Pi=\Pi(t,x)\in\R$ is the usual hydrodynamic fluid pressure, while 
$R=R(t,x)\in\R$ represents the density oscillation function
around a constant reference state. With this interpretation in mind, we will, throughout this text, refer to \eqref{i_eq:MHD-I} as the
\emph{quasi-homogeneous ideal MHD system}.

Similarly to a density, the function $R$ is simply advected by the flow. On the other hand, $R$ representing departures from the constant equilibrium, it is not relevant to impose any
sign condition on it. The coupling with the MHD part of the system takes place in the momentum equation, through the term $R\mf Cu$. Here, the tensor $\mf C\in \mc M_d(\R)$
is a $d \times d$ real-valued matrix, which is assumed to have constant coefficients.

Finally, we define the MHD pressure $\pi$ by
\begin{equation} \label{eq:MHD-p}
\pi\,:=\, \Pi\, +\, \frac{1}{2}\, |b|^2\,.
\end{equation}
The MHD pressure is the sum of the hydrodynamic pressure $\Pi$ and the magnetic pressure $|b|^2 /2$, which is itself the (opposite of the) trace of the magnetic stress tensor
$\sigma_m\,:=\,b \otimes b\, -\, \frac{1}{2}\, |b|^2\, {\rm Id}$. The MHD pressure function $\pi$ acts as a Lagrange multiplier related to the incompressibility constraint $\D (u) = 0$.

\subsection{On the ideal magnetohydrodynamics} \label{ss:i_idealMHD}


When $R\equiv0$, one recovers from \eqref{i_eq:MHD-I} the classical (homogeneous) ideal MHD system
\begin{equation}\label{i_eq:ideal}
\begin{cases}
\partial_t u + \D(u \otimes u - b \otimes b) + \nabla \left( \Pi + \dfrac{1}{2} |b|^2 \right) = 0\\[1ex]
\partial_t b + \D(u \otimes b - b \otimes u) = 0\\[1ex]
\D(u) = \div(b) = 0\,.
\end{cases}
\end{equation}
This system has been broadly studied in the past thirty years. If reviewing the whole literature on the subject goes beyond the scopes of the present paper, let us however give some
insights on the structure of the equations and well-known results.

\subsubsection{Previous results} \label{sss:i_wp}
From the mathematical viewpoint, equations \eqref{i_eq:ideal} are a quasi-linear symmetric hyperbolic system of first order. In particular, solutions to that system (formally) conserve
the total energy, defined as the sum of the kinetic and magnetic energies: for smooth enough solutions, one has
\begin{equation} \label{i_eq:id-en}
\frac{\rm d}{\dt} \Big( \|u\|^2_{L^2} + \|b\|^2_{L^2} \Big)\,=\,0\,.
\end{equation}
Notice that this identity does not hold true anymore if we replace the $L^2$ norms with the $L^p$ norms of the solutions, with $p\neq 2$.
Owing to the symmetric hyperbolic structure of the equations, one is naturally led to consider their well-posedness in energy spaces, and more precisely in Sobolev spaces $H^s$, with $s>s_0:=1+d/2$.
This was done in \cite{Sch} in bounded domains of $\R^d$, supplementing system \eqref{i_eq:ideal} by suitable boundary conditions.

On the other hand, a more careful inspection of equations \eqref{i_eq:ideal} reveals that this system is basically a coupling of transport equations.
As a matter of fact, introducing the so-called \emph{Els\"asser variables}
(sometimes called ``characteristic variables'' and denoted by $z^\pm$), defined by the relations
$$
\alpha\,:=\,u+b\qquad\qquad\mbox{ and }\qquad\qquad \bt\,:=\,u-b\,,
$$
system \eqref{i_eq:ideal} can be recasted in the following form:
\begin{equation}\label{i_eq:Els}
\begin{cases}
\partial_t \alpha + (\beta \cdot \nabla) \alpha + \nabla \pi_1 = 0\\[1ex]
\partial_t \beta + (\alpha \cdot \nabla) \beta + \nabla \pi_2  = 0\\[1ex]
\D(\alpha) = \D(\beta) = 0\,,
\end{cases}
\end{equation}
for two possibly different ``pressure functions'' $\pi_1$ and $\pi_2$. Both scalar functions $\pi_1$ and $\pi_2$ can be seen as Lagrange mutlipliers enforcing the two independent
divergence-free conditions $\D(\al) = \D(\bt) = 0$.
Notice that the homologous condition $\D(b)=0$ for the original magnetic field $b$ is naturally satisfied by solutions of \eqref{i_eq:ideal}, 
provided it is satisfied at the initial time; therefore, no Lagrange multiplier is needed in the equation for $b$ (equivalently, system \eqref{i_eq:ideal}
is not overdetermined). 

The Els\"asser formulation \eqref{i_eq:Els}, well-known to physicists, was already at the basis of the approach of \cite{Sch}, and since then
has been largely exploited in mathematical works.
For instance, the transport structure underlying equations \eqref{i_eq:Els} also makes it possible to propagate
$L^p$ norms of the solution, whenever both $u$ and $b$ are Lipschitz continuous. Starting from this observation, Secchi in \cite{Secchi} extended the result of \cite{Sch} to the class of
Sobolev spaces $W^{k,p}$, under the condition $k>k_0(p):=1+d/p$ and for $p\in\,]1,+\infty[\,$.
In \cite{C-K-S}  Els\"asser variables were used to investigate questions related to energy and magnetic helicity conservation (in
connection with Taylor's conjecture; see also \cite{BBV} for recent developments and further references) and blow-up criteria
for solutions to \eqref{i_eq:ideal}.
The same approach, based on the reformulation \eqref{i_eq:Els}, was also used (somehow in an implicit way) in \cite{MY} to study well-posedness of the ideal MHD system in
critical Besov spaces $B^{1+d/p}_{p,1}(\R^d)$, for any $p\in[1,+\infty]$, and in \cite{C-M-Z} for analogous results in the framework of Triebel-Lizorkin spaces.
We refer to \cite{C-C-M} and \cite{Hmidi} for further results and additional references on the classical ideal MHD equations.
We also refer to \cite{Bellan} for more physical insights about MHD-type systems, their (formal) derivation from the Vlasov equations and their regimes of validity.

\subsubsection{Motivations} \label{sss:i_motiv}
Before going ahead and considering the quasi-homogeneous counterpart \eqref{i_eq:MHD-I} of the ideal MHD equations, let us make two important remarks.

The first observation concerns the \emph{equivalence} between the Els\"asser formulation \eqref{i_eq:Els} and the original formulation \eqref{i_eq:ideal} of the ideal MHD.
Passing from \eqref{i_eq:ideal} to \eqref{i_eq:Els} is based only on algebraic manipulations of the equations, and yields in addition $\nabla\pi_2=\nabla\pi_1=\nabla\pi$, where $\pi$ is the MHD pressure defined in \eqref{eq:MHD-p}. On the contrary, proving that a solution of \eqref{i_eq:Els}, no matter how regular, also solves \eqref{i_eq:ideal} is not completely
clear, since one has to make sure that no ``pressure term'' appears in the magnetic field equation, or, in other words, that $\nabla\pi_1=\nabla\pi_2$.

On the other hand, as the equivalence problem seems to lie with the pressure terms, one could try to get rid of any gradient term in the equations, either the original system or the Els\"asser
one, by applying the Leray projector $\P$ onto the space of divergence-free vector fields.
One is thus led to another formulation of the ideal MHD system. This time, the equivalence between the two projected systems is clear
by purely algebraic arguments. On the other hand, the equivalence of each of these systems with the corresponding non-projected one is not totally obvious. As one may guess, in order to establish
the equivalence, one has to face a problem which is actually quite similar to the one described above.

In this paper, we will explore those issues in great detail, see Section \ref{s:Elsasser} below. In particular, we will exhibit sufficient conditions for the equivalences to hold,
for a fairly large class of weak solutions to the ideal MHD system. 
We will give an overview of our results in this direction in Subsection \ref{ss:overview}; for the time being, we only reveal that 
the previous study has some impact as well on the question of (lack of) uniqueness of solutions to systems \eqref{i_eq:ideal} and \eqref{i_eq:Els}.

\medskip

The second remark we want to make here is that all the previously mentioned well-posedness results for the ideal MHD system are only \emph{local in time}. As a matter of fact, the global
well-posedness issue for equations \eqref{i_eq:ideal} still remains an outstanding open problem in the mathematical theory of fluid mechanics, even in the two dimensional framework.
Interestingly enough, global in time results are missing even in presence of partial dissipation in the system, and more precisely when the equations are viscous but non-resistive
(see \tsl{e.g.} papers \cite{F-MC-R-R_JFA}, \cite{C-MC-R-R} and \cite{F-MC-R-R_ARMA} about this problem). 
It is worth mentioning that, in the reverse case of inviscid but resistive equations, the global existence of solutions has been established in \cite{K}.

Coming back to the ideal situation \eqref{i_eq:ideal}, the results do not improve even when the space dimension is $d=2$. This case plays a special role, because it is well-known
that the incompressible Euler equations, obtained from \eqref{i_eq:ideal} by setting $b\equiv0$, are globally well-posed in the planar case.
If one looks for explicit lower bounds for the existence time of solutions, quasi-linear hyperbolic theory gives that 
the lifespan $T>0$ of a solution $(u,b)$ can be bounded from below by
$$
T\,\geq\,\frac{C}{\left\|\big(u_0\,,\,b_0\big)\right\|_{H^s}}\,,
$$
where $C>0$ is a ``universal'' constant and $s>s_0=1+d/2$. In fact, a similar bound was exhibited in \cite{MY}, with the Sobolev norm $H^s$ replaced by the critical Besov norm $B^{1+d/p}_{p,1}$.
Notice however that, in view of the previous estimate, $T$ may remain bounded even in the regime of small magnetic fields,
specifically when $\|b_0\|_{H^s}\sim\veps$ (with $\veps>0$ small), although system \eqref{i_eq:ideal} formally converges to the Euler equations in that regime. Therefore, the previous
lower bound on $T$ is not very satisfactory in dimension $d=2$.

The study of both issues, namely the equivalence between the various formulations of the ideal MHD system and the lifespan of the solutions in two space dimensions, represents the main motivation
behind this work. For the sake of generality, we will investigate those questions in the context of the quasi-homogeneous system \eqref{i_eq:MHD-I},
but the results will apply straight away also to the classical ideal system \eqref{i_eq:ideal}.

\subsection{The quasi-homogeneous counterpart} \label{ss:i_qh}

In the case of space dimension $d=2$, the quasi-homogeneous equations \eqref{i_eq:MHD-I} have been rigorously derived in \cite{Cobb-F_Rig}
from an incompressible, viscous and resistive MHD system with variable density
and Coriolis force, in the regime of low Rossby number (\tsl{i.e.} fast rotation) and vanishing viscosity and resistivity coefficients, for small density perturbations around a constant state.

Let us briefly (and informally) explain how this works. If we denote by $\rho$ the true density of the fluid, and if the Rossby number is (say) of order $\veps>0$,
the Coriolis force, acting in the momentum equation, can be written in the form
$$
\C(\rho,u)\,:=\,\frac{1}{\veps}\,\rho\,u^\perp\,=\,\frac{1}{\veps}\,\rho\,\big(-u^2,u^1\big)\,.
$$
If the density is supposed to be a small variation of the constant state (say) $1$, we can write $\rho=1+\veps R$. On the one hand, the fluid being incompressible,
$R$ is simply advected by the flow, yielding the first equation in system \eqref{i_eq:MHD-I}. On the other hand, the Coriolis force splits into the sum of two terms:
$$
\C(\rho,u)\,=\,\frac{1}{\veps}\,u^\perp\,+\,R\,u^\perp\,.
$$
Due to incompressibility again, the first term on the right-hand side is a perfect gradient, so it can be absorbed in the pressure term. On the other hand, the second term is of order $1$, 
so it persists in the limit for $\veps$ going to $0$.
This gives rise to the coupling term $R\,\mf Cu$ in the momentum equation, with $\mf C$ of the form
\begin{equation}\label{eq:MatrixC}
\mathfrak{C} = \left(
\begin{array}{cc}
0 & -1\\
1 & 0
\end{array}
\right)\,.
\end{equation}
Taking the limit in all the other terms in the momentum and magnetic field equations is easy, using the \tsl{ansatz} $\rho=1+\veps R$, since the only singular
terms in the equations are the Coriolis force and the pressure gradient, which however disappears from the weak formulation of the system, owing to the divergence-free condition
on the test functions. Finally, the vanishing viscosity and resistivity limits lead to the ideal system \eqref{i_eq:MHD-I}.

We refer to \cite{Cobb-F_Rig} for details on the derivation of the quasi-homogeneous ideal MHD system (see also \cite{Cobb-F} for the derivation of the dissipative counterpart).
To conclude, we point out that here, for the sake of generality, we do not restrict our attention to the tensor $\mf C$ given by \eqref{eq:MatrixC}, and consider
rather a general form for it.


\medbreak
To the best of our knowledge, system \eqref{i_eq:MHD-I} is new in the literature. 
In paper \cite{Cobb-F_Rig} mentioned above, we have studied its well-posedness in Besov spaces $B^{s}_{p,r}(\R^d)$ which are embedded in the space of globally Lipschitz functions $W^{1,\infty}(\R^d)$,
in the case of \emph{finite} values of the integrability index $p\in\,]1,+\infty[\,$. Recall that the continuous embedding $B^s_{p,r}\hookrightarrow W^{1,\infty}$
holds true whenever
\begin{equation}\label{i_eq:Lip2}
s>1+\frac{d}{p}\quad \mbox{ and }\quad r\in [1,+\infty]\,,\qquad\qquad\mbox{ or }\qquad\qquad s=1+\frac{d}{p} \quad \mbox{ and }\quad r=1\,.
\end{equation}
The choice of the functional class is quite natural, after remarking that the structure of the equations is pretty similar to the classical case \eqref{i_eq:ideal},
with the only difference of an additional transport equation for the scalar function $R$ and an additional lower order coupling term $R\,\mf Cu$ in the momentum equation.
Therefore, it is natural to consider well-posedness of system \eqref{i_eq:MHD-I} in functional classes which guarantee the velocity field to be Lipschitz.

In \cite{Cobb-F_Rig}, we have exploited in a fundamental way the Els\"asser formulation of \eqref{i_eq:MHD-I}, which is absolutely analogous to \eqref{i_eq:Els}, with the addition
of the transport equation for $R$ (of course, recasted in terms of $\al$ and $\bt$, see system \eqref{eq:MHDab} below).
In order to get rid of the two ``pressure'' functions $\pi_1$  and $\pi_2$, we applied the Leray projector $\P$ onto the space of divergence-free vector fields and, after a commutator
process, we recasted the system as a coupling of three transport equations by divergence-free vector fields.
At that point, the standard Littlewood-Paley machinery applied with no essential difficulty, allowing to recover local in time well-posedness in spaces $B^s_{p,r}$,
for any $1<p<+\infty$ and under condition \eqref{i_eq:Lip2}. We also established a lower bound for the lifespan of the solutions in that functional framework
(analogous to the one given by hyperbolic theory).

The case $p=+\infty$ was not treated in \cite{Cobb-F_Rig}. There are at least two reasons for that.
The first one is that the Leray projector $\P$ is not well-defined in $L^\infty$-type spaces; therefore, treating the endpoint case $p=+\infty$ requires a different
approach. 
The second reason is pretty much linked with what we have mentiond in Paragraph \ref{sss:i_motiv}:
in a purely $L^\infty$-based framework, 
it is not clear to us that the original formulation
\eqref{i_eq:MHD-I} and the Els\"asser formulation \eqref{i_eq:Els} are equivalent. In the same way, without assuming any integrability for the solutions, the equivalence between each of those
formulations and the corresponding system obtained after application of the operator $\P$ also seems questionable.

We will give more details on how to overcome both issues in the next subsection.

\subsection{Contents of the paper and outline of the main results} \label{ss:overview}


In this subsection, we give an overview of the contents of the paper, and we formulate some rough statements, giving the flavour of our main results. We refer to Section \ref{s:results}
below for the rigorous formulation of our results.

\medbreak

The primary concern of this paper is to better understand the global in time existence issue for solutions of the ideal MHD \eqref{i_eq:ideal}, or its quasi-homogeneous counterpart \eqref{i_eq:MHD-I},
in the two-dimensional case. In this direction, we are able to give only a partial answer, contained in the next statement; we refer to Theorem \ref{th:lifespan} below for a more general
formulation.
\begin{thm} \label{t_i:life}
Let $d=2$. Let $(R_0,u_0,b_0)$ be a set of initial data in the H\"older space $C^{2,\g}$, for some $\g\in\,]0,1[\,$, with $\div(u_0)=\div(b_0)=0$ and such that $u_0$ and $b_0$ are integrable
on $\R^2$.
Assume also that the $C^{2,\g}$ norms of both $R_0$ and $b_0$ are of size $\veps>0$.
Let $(R,u,b)$ be the corresponding solution to system \eqref{i_eq:MHD-I}, and denote by $T=T_\veps>0$ its lifespan.

Then, there exists a constant $C>0$, depending also on the norms of the initial datum, such that one has the following lower bound for $T_\veps$:
$$
T_\veps\,\geq\,C\,\log^5\frac{1}{\veps}\,. 
$$
In the previous inequality, $\log^5$ stands for the fifth iterated logarithm. If $R_0\equiv0$, so that $R\equiv0$, then the $\log^5$ function can be replaced by $\log^3$.

In any case, one has the property $T_\veps\,\longrightarrow\,+\infty$ for $\veps\ra0^+$.
\end{thm}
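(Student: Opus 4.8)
The plan is to deduce the lower bound on $T_\veps$ from a set of a priori estimates fed into a continuation criterion of Beale--Kato--Majda type, namely that the solution extends as long as $\int_0^T\big(\|\nabla u\|_{L^\infty}+\|\nabla b\|_{L^\infty}\big)\,\d t$ remains finite. I would work throughout in the Els\"asser variables $\al=u+b$, $\bt=u-b$ and, since $d=2$, in their \emph{vorticity} formulation, setting $\Omega=\curl\al$ and $J=\curl\bt$ (so that $\curl u=(\Omega+J)/2$). Taking the curl of the Els\"asser system \eqref{i_eq:Els}, augmented by the transport equation for $R$ and the coupling $R\,\mf Cu$, annihilates the two pressure gradients and produces two scalar transport equations
\begin{equation*}
\partial_t\Omega+(\bt\cdot\nabla)\Omega=S^+,\qquad \partial_t J+(\al\cdot\nabla)J=S^-,
\end{equation*}
with source terms $S^\pm$ that are quadratic in the first derivatives of $(u,b)$ together with the contribution $\curl(R\,\mf Cu)$. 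The guiding principle is that, for $b$ and $R$ small, this is a small perturbation of the globally well-posed $2$-D Euler vorticity equation, and the whole point is to quantify for how long it stays so.

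The crucial structural observation --- the ``special structure of the non-linear terms'' --- is that $S^\pm$ carry an explicit factor of the small quantities $b$ and $R$. Indeed, the commutator $\curl\big((\bt\cdot\nabla)\al\big)-(\bt\cdot\nabla)\curl\al$ is bilinear in $\nabla\al,\nabla\bt$ and is exactly the ($2$-D) vortex-stretching contribution, which \emph{vanishes identically when $\al=\bt$}, i.e. when $b=0$. Writing $\al=u+b$, $\bt=u-b$ and using bilinearity, it collapses to its antisymmetric part, each summand of which contains one factor $\nabla b$; likewise $\curl(R\,\mf Cu)$ is linear in $R$ and $\nabla R$. Hence
\begin{equation*}
\|S^\pm\|_{L^\infty}\lesssim \|\nabla b\|_{L^\infty}\|\nabla u\|_{L^\infty}+\|R\|_{L^\infty}\|\nabla u\|_{L^\infty}+\|\nabla R\|_{L^\infty}\|u\|_{L^\infty},
\end{equation*}
every term being of size $\veps$ times a (possibly growing) norm. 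Transporting $\Omega,J$ along the divergence-free fields $\bt,\al$ then gives $\|\Omega(t)\|_{L^\infty}+\|J(t)\|_{L^\infty}\le \|\Omega_0\|_{L^\infty}+\|J_0\|_{L^\infty}+\int_0^t\|S^\pm\|_{L^\infty}\,\d t$, so the vorticities stay $O(1)$ as long as the small sources integrate to $O(1)$.

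From here I would run a continuation/bootstrap argument. The advected quantities satisfy $\|b(t)\|_X,\|R(t)\|_X\lesssim \veps\,\exp\!\big(C\!\int_0^t\|\nabla u\|_{L^\infty}\,\d t\big)$ for the relevant norms $X$, so $b$ and $R$ remain small precisely while $\int_0^t\|\nabla u\|_{L^\infty}\,\d t\lesssim \log(1/\veps)$. In turn, the logarithmic Biot--Savart estimate in $\R^2$, in the form $\|\nabla u\|_{L^\infty}\lesssim \|\omega\|_{L^\infty}\log\!\big(e+\|\omega\|_{C^\g}\big)+\|u\|_{L^2}$ (the integrability of $u_0,b_0$ supplying the $L^2$ control of the low frequencies, which compensates for the absence of the Leray projector $\P$ in the endpoint spaces $\B$), couples $\|\nabla u\|_{L^\infty}$ to higher H\"older norms that are themselves propagated by transport with a logarithmic loss. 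Chaining these estimates through the successive regularity levels up to the $C^{2,\g}$ regularity of the data produces a \emph{tower of exponentials} for the top norm; requiring it to remain below the threshold $\sim 1/\veps$ and inverting the tower turns each exponential layer into one iteration of logarithm, yielding $T_\veps\gtrsim \log^{k}(1/\veps)$ with $\log^k$ the $k$-fold iterated logarithm. Counting the layers gives $k=3$ in the purely magnetic case $R\equiv0$; the additional transported density $R$ and its coupling $R\,\mf Cu$ add two further levels to the hierarchy, raising the count to $k=5$. In either case $T_\veps\ra+\infty$ as $\veps\ra0^+$.

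The hard part is to close this nested scheme self-consistently on a single maximal interval $[0,T_\veps]$: one must propagate the $\veps$-smallness of $(b,R)$ and the $O(1)$ bound on $(\Omega,J)$ \emph{simultaneously} with the fast-growing control of the highest H\"older norm, ensuring that the logarithmic Biot--Savart loss never outruns the smallness gained from the factor $b$ (respectively $R$) in the sources. Organizing the hierarchy so that the precise iteration count emerges --- and, in particular, isolating the two extra layers generated by $R$ that account for the passage from $\log^3$ to $\log^5$ --- is the delicate bookkeeping at the heart of the proof; it rests decisively on the algebraic cancellation $S^\pm|_{b=0}=0$ noted above and on working in the endpoint spaces $\B$, where the transport estimates hold without loss.
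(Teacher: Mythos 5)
Your outline captures several of the paper's genuine ingredients: the Els\"asser variables, the passage to the vorticity formulation, the skew-symmetry of the bilinear term $\L$ in $2$-D (so that $\L(\nabla\al,\nabla\bt)=-2\L(\nabla u,\nabla b)$ is \emph{linear} in $\nabla b$, and likewise the coupling term is linear in $R$), the loss of one derivative in the bound for $b$ that forces the $C^{2,\g}\hookrightarrow B^2_{\infty,1}$ hypothesis, and the idea that a nested bootstrap converts exponential layers into iterated logarithms. However, there is a genuine gap at the quantitative core. You propose to close the scheme with the plain $L^\infty$ transport bound on $\Omega,J$ together with the logarithmic Biot--Savart interpolation $\|\nabla u\|_{L^\infty}\lesssim\|\omega\|_{L^\infty}\log(e+\|\omega\|_{C^\g})+\|u\|_{L^2}$. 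This is not what the paper does, and it is not innocuous: $L^\infty$ control of the vorticity does not give Lipschitz velocity, and the moment you propagate $C^\g$ (or any positive-regularity) norms of the vorticity by the classical transport estimate you pay a factor $\exp\big(C\int\|\nabla v\|_{L^\infty}\big)$ at that level, inserting an extra exponential layer into the tower. The paper's decisive tool is instead the Vishik/Hmidi--Keraani estimate (Theorem \ref{th:AnnInnLinTV}): the $B^0_{\infty,1}$ norm of $X=\curl\al$ and $Y=\curl\bt$ grows only \emph{linearly} in $\int\|\nabla(\al,\bt)\|_{L^\infty}$, and since $B^0_{\infty,1}$ control of the vorticity (plus the $L^2$ control of low frequencies) reconstructs $\|(\al,\bt)\|_{B^1_{\infty,1}}$, no logarithmic interpolation is needed at all. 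Your route would not obviously reproduce the exponents $3$ and $5$; it would at best yield a worse iteration count.

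Relatedly, the layer counting --- which you yourself identify as ``the delicate bookkeeping at the heart of the proof'' --- is asserted rather than derived, and the attribution of the two extra layers to $R$ is not quite right. In the paper the five logarithms arise from: the Gr\"onwall closure of $\E(t)\sim\|(\al,\bt)\|_{L^2\cap B^1_{\infty,1}}$; the Gr\"onwall closure of the higher norm $\H(t)\sim\|(R,\al,\bt)\|_{B^2_{\infty,1}}$; the exponential factor $e^{C\int\E}$ from propagating $\|R\|_{B^1_{\infty,1}}$; the factor $\exp\big(\int\H\big)$ coming from the loss-of-derivative estimate $\|b(t)\|_{B^1_{\infty,1}}\lesssim\|b_0\|_{B^1_{\infty,1}}\exp\big(C\int\|u\|_{B^2_{\infty,1}}\big)$; and finally the inversion of $F(t)=(c\H(0))^{-1}(e^{c\H(0)t}-1)$, which is present only because the energy estimate is exponential in time when $\mf C$ is not skew-symmetric. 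Thus one of the two extra layers in passing from $\log^{\bigcirc3}$ to $\log^{\bigcirc5}$ comes from the failure of energy conservation (cf.\ Remark \ref{r:lifespan}: skew-symmetric $\mf C$ gives $\log^{\bigcirc4}$), not from the regularity hierarchy of $R$. To complete your argument you would need to replace the log-interpolation step by the $B^0_{\infty,1}$ linear transport estimate (handling the fact that $B^0_{\infty,1}$ is not an algebra via the divergence-free structure, as in Lemma \ref{l:CommBPInfinite}) and then actually execute the three-step bootstrap of Section \ref{s:lifespan}.
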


The previous statement can be seen as an ``asymptotically global'' well-posedness result: for small non-homogeneities $R_0$ and magnetic fields $b_0$, system \eqref{i_eq:MHD-I}
behaves like the $2$-D incompressible Euler, which is globally well-posed, and the lifespan of its solutions tends to become infinite.
In particular, taking $R_0\equiv0$ (so that $R\equiv0$ for all times), we also get the corresponding result for solutions to the classical ideal MHD system \eqref{i_eq:ideal}.

As we are going to see, Theorem \ref{t_i:life} does not rely on a perturbative argument around special equilibria,
like \tsl{e.g.} \cite{Bed-M} and \cite{Elg} for Euler flows and porus media respectively; yet, one has to remark that, contrarily to Theorem \ref{t_i:life}, those results are global.

The basic idea behind Theorem \ref{t_i:life}, already used in \cite{DF} for the non-homogeneous incompressible Euler equations and in \cite{F-L} for a zero-Mach number limit system,
consists in taking advantage of improved estimates for linear transport equations in Besov spaces of regularity index $s=0$. Those estimates (recalled in Theorem \ref{th:AnnInnLinTV} below)
were first discovered by Vishik \cite{Vis} and, with a different proof, by Hmidi and Keraani \cite{HK}. They state that, when $s=0$, the $B^0_{p,r}$ norm of the solution can be bounded
\emph{linearly} with respect to the Lipschitz norm of the velocity field, instead of exponentially (as in classical $B^{s}_{p,r}$ estimates).

Now, keep in mind that we want to work in a framework which ensures the velocity field to be Lipschitz. Therefore, to get closer to a $B^0_{p,r}$ setting, the best we can do
is to take $p=+\infty$, in which case the constraint \eqref{i_eq:Lip2} becomes
\begin{equation}\label{i_eq:Lip}
\noindent s>1\quad \mbox{ and }\quad r\in [1,+\infty]\,,\qquad\qquad\mbox{ or }\qquad\qquad s=r=1\,.
\end{equation}
Observe that taking $p=+\infty$ somewhat destroys the quasi-linear symmetric hyperbolic structure of the equations. Therefore, in order to recover well-posedness in $\B$ spaces, we need to work
with the Els\"asser formulation of system \eqref{i_eq:MHD-I}.
However, as revealed in Paragraph \ref{sss:i_motiv} and recalled at the end of Subsection \ref{ss:i_qh} above, the equivalence between the two systems \eqref{i_eq:MHD-I} and \eqref{i_eq:Els}
becomes unclear in a framework based on $L^\infty$-type conditions. In fact, it is not always possible to apply the Leray projector $\P$ to the equations. Let us make an \tsl{intermezzo}
and comment on these issues for a while.

\medbreak
In Section \ref{s:Elsasser} below, we will explore in great detail the previous questions, in the case of any space dimension $d\geq2$.
In particular, in Theorem \ref{th:symm} we will exhibit sufficient conditions for the equivalence between \eqref{i_eq:MHD-I}
and \eqref{i_eq:Els} to hold for a fairly large class of weak solutions to the two systems (see Definitions \ref{d:MHDIdeal} and \ref{d:MHDab} below).
Our result requires some global integrability on the magnetic field (namely, on the quantity $\al-\bt$), and on the non-linear terms, while no special assumptions on $u$ are needed
at this level. In particular, by the construction of an explicit counterexample, we will prove the \emph{failure} of the equivalence of the two formulations,
in a framework based exclusively on $L^\infty$-type conditions on the solutions, without any additional integrability assumption. 

Similarly, in Theorem \ref{t:equiv-P} we will show sufficient conditions able to guarantee the equivalence between each of the previous systems with the corresponding system, obtained
after application of the Leary projector $\P$. Those conditions consist again in requiring global integrability of all the quantities appearing in the equations; this time,
assumptions have to be made on both $u$ and $b$ (or, equivalently, on $\al$ and $\bt$).
Moreover, we will shape a new counterexample from the previous one, which entails that the equivalence does not hold, in general, in a purely $L^\infty$ framework.

From those considerations, we will also deduce some conclusions about the lack of uniqueness of solutions to systems \eqref{i_eq:MHD-I} and \eqref{i_eq:Els} in a setting based
merely on $L^\infty$-type conditions.
In particular, we will see that uniqueness for the projected systems does not yield (for $L^\infty$ solutions, no matter how regular they are) uniqueness for the corresponding
original systems (namely, the systems where the pressure gradients appear). Notice that all we have said until now 
is not specific of the quasi-homogeneous system \eqref{i_eq:MHD-I}: it applies also to the classical ideal MHD system \eqref{i_eq:ideal} and (for what concerns
the use of the Leray projector $\P$ and consequences) the incompressible Euler equations.

\medbreak
This having been pointed out, let us come back to the question of the time of existence of solutions, and resume with a sketch of the proof to Theorem \ref{t_i:life} above.

In order to bypass the previous difficulties, we impose some integrability conditions on the initial velocity field $u_0$ and the initial magnetic field $b_0$:
specifically, we require $u_0$ and $b_0$ to belong to $L^2(\R^d)$. Notice that such integrability is not needed for $R$, which thus enjoys only $L^\infty$-based regularity properties.
We also point out that the finite energy condition is on the one hand physically relevant, since consistent with the derivation of system \eqref{i_eq:MHD-I} given in \cite{Cobb-F_Rig}, and
on the other hand natural in our context.
Indeed, similarly to \eqref{i_eq:id-en}, we can find a simple energy estimate also for equations \eqref{i_eq:MHD-I}: for regular solutions, a basic energy method yields
\begin{equation*}
\frac{1}{2} \frac{\rm d}{\dt} \int_{\R^d} \left(  |u|^2 + |b|^2 \right)\, \dx\, +\,  \int R\, \mathfrak{C} u \cdot u\, \dx\, =\, 0\,.
\end{equation*}
Now, using H\"older's inequality 
and the fact that the $L^\infty$ norms of $R$ are preserved by pure transport by a divergence-free vector field, namely $\|R(t)\|_{L^\infty} = \|R_0\|_{\infty}$,
we easily find
\begin{equation}\label{eq:BasicEN}
\|u(t)\|_{L^2} + \|b(t)\|_{L^2}\, \leq\, \big( \|u_0\|_{L^2} + \|b_0\|_{L^2} \big)\, e^{c\|R_0\|_{L^\infty} t}\,.
\end{equation}
Note that, if the matrix $\mathfrak{C}$ is skew-symmetric, or equivalently if $\mathfrak{C}y \cdot y = 0$ for all $y \in \mathbb{R}^d$,
then the integral involving $R \mathfrak{C} u\cdot u$ vanishes. This is the case in particular if $d = 2$ and $\mathfrak{C}$ is given by \eqref{eq:MatrixC}.
In that case, one recovers the classical energy conservation \eqref{i_eq:id-en}, namely, after integration in time,
\begin{equation} \label{eq:BasicENantiSymmetric}
\|u(t)\|_{L^2} + \|b(t)\|_{L^2}\, =\, \|u_0\|_{L^2} + \|b_0\|_{L^2}\,.
\end{equation}

Of course, the first step in the proof of Theorem \ref{t_i:life} is an existence and uniqueness result in the previous functional framework, together with
a continuation criterion which guarantees us that we can measure the lifespan of a solution
in the space of lowest regularity index, \tsl{i.e.} for $s=r=1$. Those facts are provided by the next statement (see Theorems \ref{th:BesovWP} and \ref{th:cont-crit} below for the rigorous
formulations).
\begin{thm} \label{t_i:WP}
Let $d\geq2$. Let $(s,r)\in\R\times[1,+\infty]$ satisfy condition \eqref{i_eq:Lip}. Then the quasi-homogeneous ideal MHD system \eqref{i_eq:MHD-I} is well-posed, locally in time,
in the space
$$
\mbb X^s_r\,:=\,\left\{(R,u,b)\in\left(\B(\R^d)\right)^3\;\Big|\quad \div(u)\,=\,\div(b)\,=\,0\qquad\mbox{ and }\qquad u\,,\,b\;\in\,L^2(\R^d)\; \right\}\,.
$$

In addition, let $(R,u,b)$ be a solution of system \eqref{i_eq:MHD-I} on $[0,T[\,\times\R^d$, for some $T>0$, with $\big(R(t),u(t),b(t)\big)\in\mbb X^s_r$ for all $t\in[0,T[\,$.
If $T<+\infty$ and
$$
\int_0^{T} \Big( \big\| \nabla u(t) \big\|_{L^\infty} + \big\| \nabla b(t) \big\|_{L^\infty} \Big) \dt < +\infty\,,
$$
then $(R,u,b)$ can be continued beyond $T$ into a solution of \eqref{i_eq:MHD-I} with the same regularity.
\end{thm}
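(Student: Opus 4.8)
The plan is to pass to the Els\"asser formulation of \eqref{i_eq:MHD-I}. Setting $\al=u+b$, $\bt=u-b$ (so $u=(\al+\bt)/2$) and using $\div u=\div b=0$, the momentum and magnetic field equations combine into
\[
\d_t\al+(\bt\cdot\nabla)\al+R\mf Cu+\nabla\pi=0,\qquad \d_t\bt+(\al\cdot\nabla)\bt+R\mf Cu+\nabla\pi=0,
\]
coupled with the transport equation $\d_t R+\div(Ru)=0$ and the constraints $\div(\al)=\div(\bt)=0$. I would then eliminate the pressure by applying the Leray projector $\P=\Id-\nabla\Delta^{-1}\div$, rewriting the $\al$-equation as $\d_t\al+\P[(\bt\cdot\nabla)\al+R\mf Cu]=0$ and symmetrically for $\bt$. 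The analytic obstruction, emphasized already in Paragraph \ref{sss:i_motiv}, is that $\P$ is \emph{not} bounded on $\B$ when $p=+\infty$; this is precisely where the finite-energy hypothesis $u,b\in L^2$ (equivalently $\al,\bt\in L^2$) must be exploited.

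For the a priori estimates I would apply the standard transport estimates in $\B$ (valid precisely under \eqref{i_eq:Lip}) to each of the three equations, keeping the convective terms $(\bt\cdot\nabla)\al$, $(\al\cdot\nabla)\bt$, $\div(Ru)$ as the transport part and treating $\nabla\pi$ and the coupling term $R\mf Cu$ as a forcing. For the $\al$-equation this gives, schematically,
\[
\|\al(t)\|_{\B}\le\Big(\|\al_0\|_{\B}+\int_0^t\big(\|\nabla\pi\|_{\B}+\|R\mf Cu\|_{\B}\big)\,d\tau\Big)\exp\Big(C\int_0^t\|\nabla\bt\|_{L^\infty}\,d\tau\Big).
\]
The crucial point is the estimate of $\nabla\pi=-\nabla\Delta^{-1}\div[(\bt\cdot\nabla)\al+R\mf Cu]$ in $\B$, which I would resolve by a low/high-frequency splitting. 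On high frequencies I would use the divergence-free cancellation $\div[(\bt\cdot\nabla)\al]=\sum_{i,j}\d_i\bt^j\,\d_j\al^i$ (valid since $\div\bt=0$), which exposes the source of the pressure as a product of \emph{first} derivatives; the remaining operator $\nabla\Delta^{-1}$ then gains one derivative and yields a Gronwall-compatible bound $\lesssim\|\nabla\bt\|_{L^\infty}\|\al\|_{\B}+\|\nabla\al\|_{L^\infty}\|\bt\|_{\B}$. On low frequencies, where $L^\infty$-control of $\nabla\Delta^{-1}\div$ fails, I would use Bernstein's inequality together with the $L^2$ bound on $\al,\bt$ (propagated by \eqref{eq:BasicEN}), estimating the low-frequency block of $\nabla\pi$ by $\|\bt\otimes\al\|_{L^2}\lesssim\|\bt\|_{L^\infty}\|\al\|_{L^2}$. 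Combining with the Besov product rule for $\|R\mf Cu\|_{\B}$, the conservation $\|R(t)\|_{L^\infty}=\|R_0\|_{L^\infty}$, and $\B\hookrightarrow W^{1,\infty}$, one collects all quantities into $N(t):=\|R\|_{\B}+\|\al\|_{\B}+\|\bt\|_{\B}+\|u\|_{L^2}+\|b\|_{L^2}$ and closes a differential inequality $N'(t)\lesssim N(t)^2$, whence a lifespan $T\gtrsim 1/N(0)$ and uniform bounds on $[0,T]$.

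With the a priori estimates in hand, existence follows from a standard approximation scheme: I would regularize the data (Friedrichs mollification or frequency truncation), solve the resulting system, obtain uniform bounds by the above argument, and pass to the limit by compactness, using that the equations provide uniform bounds on the time derivatives in a space one derivative below $\B$ (Aubin--Lions/Ascoli, with the $L^2$ integrability controlling the behaviour at spatial infinity). Uniqueness I would prove by estimating the difference of two solutions in the lower-regularity norm $B^{s-1}_{\infty,r}$, where the transport and pressure structure closes with the customary one-derivative loss absorbed by the uniform $\B$ bounds on both solutions. Finally, to return from the projected Els\"asser system to the original formulation \eqref{i_eq:MHD-I}, and to recover the pressure gradients, I would invoke the equivalence results (Theorems \ref{th:symm} and \ref{t:equiv-P}), whose hypotheses are met precisely because $u,b\in L^2$ and the nonlinear terms are integrable.

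The continuation criterion is then essentially a corollary of the a priori estimate. Since $\|\nabla u\|_{L^\infty}+\|\nabla b\|_{L^\infty}\simeq\|\nabla\al\|_{L^\infty}+\|\nabla\bt\|_{L^\infty}$, the divergence-free cancellation guarantees that the pressure forcing enters the $\B$-estimate with a Lipschitz weight, so that the top-order norm obeys a (possibly logarithmic, Osgood-type) Gronwall inequality driven by $\int_0^T(\|\nabla u\|_{L^\infty}+\|\nabla b\|_{L^\infty})\,d\tau$; the lower-order coupling terms are controlled beforehand using the energy bound \eqref{eq:BasicEN} and the conservation of $\|R\|_{L^\infty}$. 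Finiteness of that integral therefore keeps $N(t)$ bounded up to $T$, and one may restart the local existence result from a time close to $T$ with uniformly bounded data, extending the solution beyond $T$ with the same regularity. Throughout, I expect the main obstacle to be exactly the control of $\nabla\pi$ in the $\B$ norm, i.e.\ the failure of $\P$ to act on $L^\infty$-type spaces, which is overcome by the combination of the divergence-free cancellation on high frequencies and the finite-energy assumption on low frequencies.
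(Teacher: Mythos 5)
Your proposal is correct in outline and rests on the same structural ingredients as the paper (Els\"asser variables, the divergence-free cancellation turning the source of the pressure into a product of first-order derivatives, the $L^2$ hypothesis to control low frequencies, and Lipschitz-weighted commutator estimates for the continuation criterion), but it takes a genuinely different route for the core \textsl{a priori} estimates. You apply the Leray projector and then estimate $\nabla\pi=-\nabla\Delta^{-1}\div[(\bt\cdot\nabla)\al+R\mf Cu]$ directly in $\B$ by a low/high frequency splitting; the paper instead refuses to touch $\P$ at this stage, applies the $\curl$ to the Els\"asser system, estimates the vorticities $X,Y$ in $B^{s-1}_{\infty,r}$ (where the forcing $\L(\nabla\al,\nabla\bt)$ plays the role your pressure source plays), and reconstructs $\|\al\|_{\B}\lesssim\|\al\|_{L^2}+\|X\|_{B^{s-1}_{\infty,r}}$ by Biot--Savart. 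The two are morally equivalent — on divergence-free fields the curl and the projection kill the same gradient part — but the vorticity route has the advantage that no meaning need be given to $\P$ or to $\nabla\pi$ during the estimates (the pressure is recovered and estimated only \textsl{a posteriori}, exactly by the splitting you describe), whereas your route must first invoke the equivalence theorems to justify that the projected system is the right one, which your hypotheses ($u,b\in L^2\cap L^\infty$, $R\in L^\infty$) do permit. Your approach buys a more classical-looking argument; the paper's buys a cleaner separation between the analytic estimates and the functional-analytic issues surrounding $\P$ on $L^\infty$-type spaces.

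Two points deserve more care than you give them. First, at the endpoint $s=r=1$ your ``Gronwall-compatible bound'' for the high-frequency part of the pressure requires estimating $\sum_{i,j}\partial_i\bt^j\,\partial_j\al^i$ in $B^0_{\infty,1}$, which is \emph{not} an algebra: the paraproducts are fine, but the Bony remainder $\mathcal R(\partial_i\bt^j,\partial_j\al^i)$ needs the divergence-free condition a \emph{second} time, to rewrite the sum as $\sum_i\partial_i(\bt^j\partial_j\al^i)$ and shift one derivative outside the remainder (this is exactly Lemma \ref{l:CommBPInfinite} and the remark following \eqref{eq:ell}); a single invocation of the cancellation is not enough. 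Second, your uniqueness argument in $B^{s-1}_{\infty,r}$ is standard for $s>1$ but delicate at $s=r=1$, where the difference equation again produces products at regularity $0$; since the finite-energy hypothesis is available, the paper's $L^2$ energy estimate for the difference of two solutions (Theorem \ref{th:w-s}) is both simpler and uniform in $(s,r)$, and is the natural choice here.
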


Once global integrability of solutions is guaranteed, one may think that the method of \cite{Cobb-F_Rig}, based on the application of the Leray projector $\P$ to the equations,
would work and lead to the desired result. Actually, there is still one fundamental reason for avoiding the use of $\P$ and looking for a different approach:
since the couple $(s,r)$ satisfies \eqref{i_eq:Lip}, the lowest regularity space we can reach is $B^{1}_{\infty,1}$, whose regularity is still too high
to apply the estimates of \cite{Vis} and \cite{HK} (which, we recall, require to work in $B^0_{p,r}$ spaces).

At this point, the key remark is that, if $u$ and $b$ belong to $B^{1}_{\infty,1}$, then both the vorticity $\omega\,:=\,\d_1u^2-\d_2u^1$ and the electric current $j\,:=\,\d_1b^2-\d_2b^1$
have the sought regularity, namely they belong to $B^0_{\infty,1}$. Hence, the right way to proceed is to look for $B^{0}_{\infty,1}$ estimates for 
$\omega$ and $j$, or better for their respective counterparts in Els\"asser variables, which we will denote $X\,:=\,\d_1\al^2-\d_2\al^1$ and $Y\,:=\,\d_1\bt^2-\d_2\bt^1$.
On the one hand, notice that the $L^2$ condition on $u$ and $b$, or equivalently on $\al$ and $\bt$, also plays a role at this level, controlling the low frequency part of the solution so as to
reconstruct $\al$ and $\bt$ from their $\curl$, $X$ and $Y$.
On the other hand, we remark that working at the $B^0_{\infty,1}$ regularity level is a source of technical difficulties, since that space is \emph{not} an algebra.
The consequence is that the analysis of the non-linear terms becomes much more involved; however, one succeeds in saving the game by using in a crucial way the special structure of the non-linear
terms.

Before concluding, one last remark is necessary. The argument depicted above would lead to an estimate for the lifespan in terms of $\al$ and $\bt$, and not of $u$ and $b$. In that case,
it would be impossible to get the asymptotically global result of Theorem \ref{t_i:life} in terms of small values of $R_0$ and $b_0$; instead, our argument would yield a similar result,
in \emph{any} space dimension, in terms of small vaues of $R_0$ and $\bt_0$ (see also Remark \ref{r:life-b} below in this respect).
The last key remark coming into play here involves, once again, the special structure of the non-linear terms appearing in the equations for $X$ and $Y$:
the basic observation is that it is possible to bound those non-linear terms \emph{linearly} with respect to the density variation $R$ and the magnetic field $b$.

This fact is absolutely fundamental to get Theorem \ref{t_i:life}. However, invoking the magnetic field $b$ leads us to leave the Els\"asser formulation for a while, and lose its symmetric structure.
The consequence is that the bounds for $b$ involve a \emph{loss of derivatives}, namely one is obliged to use a higher order norm of the velocity field $u$.
This implies that we need to impose a higher regularity assumption on the initial datum (see Theorem \ref{th:lifespan} for the precise statement):
this requirement is possibly technical, but absolutely unavoidable for our argument to work.

In order to handle the loss of derivatives in the previous estimate, we start by bounding $u$ in terms of $\al$ and $\bt$: this
allows us to recover the symmetric framework provided by the Els\"asser formulation \eqref{i_eq:Els}.
Finally, and this is the end of our argument, the fundamental observation is that the higher order norm of the solution stays bounded as soon as the lower regularity norm stays bounded.
Roughly speaking, this is the same idea which stands at the basis of the continuation criterion of Theorem \ref{t_i:WP}: it plays a key role also here, and allows us to
get the desired lower bound for the lifespan $T$.

\subsubsection*{Structure of the paper}
Before concluding this introduction, we give an overview of the paper.

In the next section, we give the rigorous statements of our main results: well-posedness of the quasi-homogeneous ideal MHD system \eqref{i_eq:MHD-I}
in the $\B$ framework, a continuation criterion and the improved lower bound for the lifespan of the solutions in the case of two space dimensions.

In Section \ref{s:tools} we recall some basic material, mainly from Littlewood-Paley theory and Fourier analysis, which is heavily used throughout this work.
In particular, we recall classical transport estimates in Besov spaces, together with the improvement of \cite{Vis} and \cite{HK} in the case of Besov spaces having regularity index
$s=0$.

In Section \ref{s:Elsasser}, we investigate the equivalence between the original formulation of the ideal MHD system, the Els\"asser formulation, and the (original or Els\"asser) formulation
obtained after applying the Leray projector onto the space of divergence-free vector fields. The equivalence of all those formulations is proved under some
global integrability assumptions for $(u,b)$, or equivalently for $(\al,\bt)$, and the non-linear terms. We also construct simple counterexamples, which show that the equivalences
fails in a purely $L^\infty$ setting (without global integrability assumptions).
Considerations about lack of uniqueness of solutions will be made as well.

Section \ref{WP} contains the proof to the local well-posedness result and to the continuation criterion. The proof of the improved lifespan
of solutions in $2$-D is the topic of Section \ref{s:lifespan}, which concludes the paper.

\subsubsection*{Acknowledgements}

{\small
The work of the second author has been partially supported by the LABEX MILYON (ANR-10-LABX-0070) of Universit\'e de Lyon, within the program ``Investissement d'Avenir''
(ANR-11-IDEX-0007),  and by the projects BORDS (ANR-16-CE40-0027-01) and SingFlows (ANR-18-CE40-0027), all operated by the French National Research Agency (ANR).
}

\section{Statement of the main results} \label{s:results}

In this section, we give the rigorous statements of our main results, concerning the well-posedness of problem \eqref{i_eq:MHD-I}.
First of all, we have a local in time existence and uniqueness statement for initial data in the Besov space $\B$, under the Lipschitz condition \eqref{i_eq:Lip} on the indices. As already mentioned,
we ask also for finite energy initial velocity and magnetic fields.

\begin{thm}\label{th:BesovWP}
Let $(s,r)\in\R\times[1, +\infty]$ such that either $s>1$, or $s=r=1$. Let $\big(R_0,u_0, b_0\big)$ be a set of initial data such that $R_0\in\B(\R^d)$ and
$u_0,b_0\,\in L^2(\R^d;\R^d)\cap \B(\R^d;\R^d)$, with $\div(u_0)\,=\,\div(b_0)\,=\,0$.

Then, there exists a time $T > 0$ such that, on $[0,T]\times\R^d$, problem \eqref{i_eq:MHD-I} has a unique solution $(R,u,b)$ with the following properties: if $r < +\infty$,
\begin{itemize}
 \item $R\in C^0\big([0,T];\B(\R^d)\big)\,\cap\,C^1\big([0,T]; B^{s-1}_{\infty,r}(\R^d)\big)$;
 \item both $u$ and $b$ belong to $C^0\big([0,T];\B(\R^d;\R^d)\big)\,\cap\,C^1\big([0,T];L^2(\R^d;\R^d)\cap B^{s-1}_{\infty,r}(\R^d;\R^d)\big)$.
\end{itemize}
In the case $r=+\infty$, one only gets weak continuity in time, and the space $C^0\big([0,T];\B\big)$ has to be replaced by the space $C^0_w\big([0,T];B^s_{\infty,\infty}\big)$.

In addition, the hydrodynamic pressure $\Pi$ enjoys the property $\nabla\Pi\in C^0\big([0,T];L^2(\R^d;\R^d)\cap\B(\R^d;\R^d)\big)$, with the usual modification when $r=+\infty$.

Finally, there exists a constant $C>0$, depending only on the dimension $d$ and on the regularity parameters
$(s,r)$, such that
\begin{equation} \label{est:d-life}
T\, \geq \,\frac{1}{\| R_0 \|_{L^\infty}}\; {\rm argsinh}\! \left( \frac{C\, \| R_0 \|_{L^\infty} }{ \big\|R_0\big\|_{\B}\,+\,\big\|\big(u_0, b_0\big)\big\|_{B^{s}_{\infty, r}\cap L^2}} \right)\,.
\end{equation}
\end{thm}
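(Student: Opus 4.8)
The plan is to pass to the Els\"asser formulation \eqref{i_eq:Els} (augmented with the transport equation $\d_t R + (u\cdot\nabla)R = 0$, with $u = (\al+\bt)/2$) and then to eliminate the two pressure gradients by applying the Leray projector $\P$. Since the hypotheses grant $u_0,b_0\in L^2$, hence $\al_0,\bt_0\in L^2$, the projector is well defined on the dynamical variables, and the equivalence results of Section \ref{s:Elsasser} (Theorems \ref{th:symm} and \ref{t:equiv-P}) ensure that solving the projected Els\"asser system is the same as solving \eqref{i_eq:MHD-I}. After projection the system reads, schematically, as a coupling of transport equations, $\d_t\al + \P\big((\bt\cdot\nabla)\al\big) = -\P(R\,\mf C u) + [\ldots]$ and $\d_t\bt + \P\big((\al\cdot\nabla)\bt\big) = -\P(R\,\mf C u) + [\ldots]$, together with the pure transport of $R$, where the bracketed terms are commutators $[\P,\,v\cdot\nabla]$ of order zero controlled by $\B$ product and commutator estimates (recall that, under \eqref{i_eq:Lip}, $\B$ is an algebra continuously embedded in $W^{1,\infty}$).

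I would then construct approximate solutions by a standard iterative linear scheme, at each step solving three linear transport equations with divergence-free drift, to which the Besov transport estimates recalled in Section \ref{s:tools} apply. The core of the argument is the uniform a priori bound. Set $E(t):=\|R(t)\|_{\B}+\|(u,b)(t)\|_{\B\cap L^2}$ and $\rho:=\|R_0\|_{L^\infty}$, which is conserved by the pure transport of $R$. The $L^2$ part is controlled by the energy estimate \eqref{eq:BasicEN}, while the transport estimates together with the algebra property yield a differential inequality in which the quadratic transport nonlinearity produces a term $\sim E^2$ and the Coriolis coupling $R\,\mf C u$, estimated \emph{linearly} through the conserved quantity $\rho$, produces a term $\sim \rho\,E$. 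I expect the resulting closed inequality to be of Riccati type, comparable to $E'(t)\le C\,E(t)\sqrt{E(t)^2+\rho^2}$; integrating it through the substitution $w=1/E$ yields exactly the existence time in \eqref{est:d-life}, the $\mathrm{argsinh}$ being the signature of this $\sqrt{E^2+\rho^2}$ structure and degenerating to the classical hyperbolic bound $T\gtrsim 1/E(0)$ as $\rho\to0$.

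With uniform bounds in hand, convergence of the scheme follows by estimating differences of consecutive iterates in the lower-regularity norm $B^{s-1}_{\infty,r}\cap L^2$ — the natural loss of one derivative coming from the difference of transport drifts — and recovering the full regularity by interpolation with the uniform high bound (and a Fatou-type argument). Passing to the limit gives a solution of the projected Els\"asser system; the equivalence of Section \ref{s:Elsasser} then returns a genuine solution of \eqref{i_eq:MHD-I}, the pressure being reconstructed from the elliptic relation $-\Delta\pi = \D\,\D(u\otimes u - b\otimes b) + \D(R\,\mf C u)$ and $\nabla\Pi=\nabla\pi-\tfrac12\nabla|b|^2$, which gives the stated $L^2\cap\B$ regularity of $\nabla\Pi$ by Calder\'on--Zygmund theory. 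Uniqueness comes from the same difference estimate in $B^{s-1}_{\infty,r}\cap L^2$, and time continuity from transport theory: strong continuity $C^0\big([0,T];\B\big)$ when $r<+\infty$ (using density of smooth functions), and only weak-$*$ continuity $C^0_w\big([0,T];B^s_{\infty,\infty}\big)$ when $r=+\infty$, since that space is non-separable.

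The main obstacle I anticipate is twofold. First, the endpoint index $p=+\infty$: the Leray projector $\P$ is unbounded on $L^\infty$-type spaces, so the whole reduction to transport equations hinges on the $L^2$ assumption and on the equivalence theorems, and one must verify at every step that the integrability needed to legitimately apply $\P$ and to reconstruct $(u,b)$ from their low-frequency ($L^2$) and high-frequency ($\B$) parts is propagated in time. Second, and more delicate, is extracting the sharp $\mathrm{argsinh}$ lifespan: this forces one to keep the conserved norm $\rho=\|R_0\|_{L^\infty}$ isolated as a linear factor in the coupling term rather than absorbing it into $E^2$, which is precisely where the special structure of $R\,\mf C u$ (linearity in $R$, together with the transport of $\|R\|_{L^\infty}$) must be exploited.
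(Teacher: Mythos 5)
Your overall architecture --- Els\"asser variables, an iterative linear scheme, the $L^2$ energy estimate producing the $e^{c\|R_0\|_{L^\infty}t}$ factor, a quadratic Besov estimate, and the $\rm argsinh$ obtained by playing the two against each other --- is the same as the paper's, and your Riccati-type integration of $E'\lesssim E^2+\|R_0\|_{L^\infty}E$ does yield a bound equivalent to \eqref{est:d-life} (since $\log(1+x)$ and ${\rm argsinh}(x)$ are comparable). The paper packages this last step slightly differently, proving the integral inequality $E(t)\lesssim E(0)e^{ct\|R_0\|_{L^\infty}}+\int_0^tE^2$ --- the only $\|R_0\|_{L^\infty}$-linear contribution coming from the energy inequality \eqref{eq:BasicEN} --- and extracting the $\rm argsinh$ by a bootstrap on the first time at which $\int_0^tE^2$ exceeds $E(0)e^{ct\|R_0\|_{L^\infty}}$; your packaging is acceptable. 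Likewise, estimating differences of iterates in $B^{s-1}_{\infty,r}\cap L^2$ is a viable alternative to the paper's choice of running the convergence argument purely in $C^0_T(L^2)$.

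The genuine gap is in the core technical device. You propose to apply the Leray projector and to control the commutators $[\P,v\cdot\nabla]$ ``by $\B$ product and commutator estimates''. This is precisely what the paper refuses to do, and for a reason: $\P$ is \emph{not} bounded on $B^s_{\infty,r}$ (its symbol is discontinuous at $\xi=0$, so $\Delta_{-1}\P$ is unbounded on $L^\infty$), hence neither $\P\big(R\,\mf C(\al+\bt)\big)$ nor $[\bt\cdot\nabla,\P]\al=(\Id-\P)\,\D(\bt\otimes\al)$ can be estimated in $\B$ by the lemmas you invoke; in particular Lemma \ref{l:ParaComm} requires a symbol that is smooth on all of $\R^d$. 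Your $L^2$ hypothesis makes these objects \emph{well defined}, but turning well-definedness into \emph{norm estimates} forces you to split low and high frequencies, bound the low-frequency block through $\|\cdot\|_{L^2}$ via Bernstein, and exploit the smoothness of the symbol on annuli for the rest --- which is exactly the Biot--Savart inequality $\|f\|_{\B}\lesssim\|f\|_{L^2}+\|\curl(f)\|_{B^{s-1}_{\infty,r}}$ underlying the paper's vorticity formulation \eqref{eq:MHDVorticity}. You must either carry out this splitting explicitly or pass to the vorticity variables as the paper does. A second, related omission concerns the endpoint $s=r=1$: there the relevant quadratic expressions (the trace $\nabla\bt:\nabla\al$ appearing in the commutator and in the pressure equation, or $\L(\nabla\al,\nabla\bt)$ in the vorticity equation) are products of two $B^0_{\infty,1}$ functions, and $B^0_{\infty,1}$ is \emph{not} an algebra (Remark \ref{r:tame}): Bony's remainder only lands in $B^0_{\infty,\infty}$. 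The paper's Lemma \ref{l:CommBPInfinite} rescues this by using $\div(\al)=\div(\bt)=0$ to rewrite the remainder in divergence form and gain one derivative. Your appeal to ``the algebra property of $\B$'' does not cover this case, and without this structural observation the a priori estimate does not close at the lowest admissible regularity.
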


Before going on, let us make some remarks about the previous statement.
\begin{rmk} \label{r:d-life}
The $\rm argsinh$ function is present in the previous lower bound \eqref{est:d-life} for the lifespan $T$ only because the energy inequality \eqref{eq:BasicEN} is exponential with respect to time.

In the case where $R \equiv 0$, or if the matrix $\mathfrak{C}$ is skew-symmetric, then we have the better energy inequality \eqref{eq:BasicENantiSymmetric}. This yields the lower bound
\begin{equation*}
T\, \geq\, \frac{C}{\big\|R_0\big\|_{\B}\,+\,\big\|\big(u_0, b_0\big)\big\|_{B^{s}_{\infty, r}\cap L^2}}\,,
\end{equation*}
which is the classical lower bound one expects to get for quasi-linear hyperbolic problems.
Note that, for small values of $\|R_0\|_{L^\infty}\,\big(\big\|R_0\big\|_{\B}+\big\|\big(u_0, b_0\big)\big\|_{B^{s}_{\infty, r}\cap L^2}\big)^{-1}$, the lower bound \eqref{est:d-life}
is near to the previous inequality.
\end{rmk}

\begin{rmk}
Uniqueness in Theorem \ref{th:BesovWP} is a consequence of a stability estimate for the Els\"asser variables, see Theorem \ref{th:w-s} below. However, a slight modification
of the argument of the proof would in fact yield a weak-strong uniqueness result: so long as the solution of Theorem \ref{th:BesovWP} exists, this solution
is the unique solution to problem \eqref{i_eq:MHD-I} in the energy space $L^\infty ([0, T] ; L^2 \cap L^\infty \times L^2 \times L^2)$.

See also Remark \ref{r:w-s} below for more comments about this.
\end{rmk}

%

Next, we establish a continuation criterion in the previous functional framework, in the spirit of the classical Beale-Kato-Majda continuation criterion \cite{B-K-M} for solutions
to the incompressible Euler equations.

\begin{thm} \label{th:cont-crit}
Let $T^* > 0$ and let $(R, u, b)$ be a solution to \eqref{i_eq:MHD-I} on $[0,T^*[\,\times\R^d$, enjoying the properties described in Theorem \ref{th:BesovWP} for all $T<T^*$. Assume that 
\begin{equation*}
\int_0^{T^*} \Big( \big\| \nabla u(t) \big\|_{L^\infty} + \big\| \nabla b(t) \big\|_{L^\infty} \Big) \dt < +\infty\,.
\end{equation*}

Then, if $T^*<+\infty$, the triplet $(R, u, b)$ can be continued beyond $T^*$ into a solution of \eqref{i_eq:MHD-I} with the same regularity.
\end{thm}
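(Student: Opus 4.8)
The plan is to show that the hypothesis forces the $\mbb X^s_r$ norm of the solution to stay bounded as $t\uparrow T^*$; once this a priori bound is available, one concludes exactly as in Theorem \ref{th:BesovWP}. Namely, pick $t_0<T^*$ close to $T^*$, solve \eqref{i_eq:MHD-I} locally with data $(R,u,b)(t_0)$, and observe that the lower bound \eqref{est:d-life} on the lifespan depends only on $\|R_0\|_{L^\infty}$, $\|R(t_0)\|_{\B}$ and $\|(u,b)(t_0)\|_{\B\cap L^2}$, hence is uniform in $t_0$. The resulting solution therefore exists on a time interval of fixed length, which exceeds $T^*-t_0$ and extends $(R,u,b)$ past $T^*$; uniqueness guarantees that the two solutions coincide. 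So everything reduces to the a priori bound.

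I would first collect the low-order information. Since $R$ is transported by the divergence-free field $u$, its $L^\infty$ norm is conserved, $\|R(t)\|_{L^\infty}=\|R_0\|_{L^\infty}$, and the energy estimate \eqref{eq:BasicEN} keeps $\|u(t)\|_{L^2}+\|b(t)\|_{L^2}$ bounded on the finite interval $[0,T^*[$. The finite-energy assumption then buys integrability of $\|u\|_{L^\infty}$: by the Gagliardo--Nirenberg inequality $\|u\|_{L^\infty}\lesssim\|u\|_{L^2}^{2/(d+2)}\|\nabla u\|_{L^\infty}^{d/(d+2)}$, so H\"older's inequality and the hypothesis give $\int_0^{T^*}\|u\|_{L^\infty}\dt<+\infty$. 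Finally, propagating the lowest-regularity norm of $R$ by a plain transport estimate, $\frac{\rm d}{\dt}\|R\|_{B^1_{\infty,1}}\lesssim\|\nabla u\|_{L^\infty}\|R\|_{B^1_{\infty,1}}$, and Gronwall's lemma yield a bound for $\|R(t)\|_{B^1_{\infty,1}}$, hence for $\|\nabla R(t)\|_{L^\infty}\le K$, uniformly on $[0,T^*[$. Note that only $\int_0^{T^*}\|\nabla u\|_{L^\infty}\dt<+\infty$ enters here, which is exactly the content of the hypothesis.

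For the top-order bound I would pass to the Els\"asser variables $\al=u+b$, $\bt=u-b$ and to the corresponding projected system, whose equivalence with \eqref{i_eq:MHD-I} in the present finite-energy framework is granted by Section \ref{s:Elsasser}. Writing $A=\|\al\|_\B$, $B=\|\bt\|_\B$ and $\rho=\|R\|_\B$, the crucial point is that the symmetric transport structure of the Els\"asser system, combined with the refined commutator estimates, produces inequalities whose coefficients are \emph{Lipschitz} norms only:
\begin{equation*}
\frac{\rm d}{\dt}A\,\lesssim\,\|\nabla\bt\|_{L^\infty}\,A\,+\,\|\nabla\al\|_{L^\infty}\,B\,+\,\|g_\al\|_\B\,,
\end{equation*}
and symmetrically for $B$; here the forcing $g_\al$ gathers the pressure commutator $(\Id-\P)(\bt\cdot\nabla\al)=\nabla\Delta^{-1}(\nabla\bt:\nabla\al)$, bounded in $\B$ by $\|\nabla\bt\|_{L^\infty}A+\|\nabla\al\|_{L^\infty}B$ thanks to the divergence-free cancellation, and the coupling term $\P(R\mathfrak{C}u)$, bounded by $\|R_0\|_{L^\infty}(A+B)+\|u\|_{L^\infty}\rho$ since $\B$ is an algebra. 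For $R$ the transport estimate gives $\frac{\rm d}{\dt}\rho\lesssim\|\nabla u\|_{L^\infty}\rho+\|\nabla R\|_{L^\infty}\|u\|_\B$, where the a priori dangerous, quadratic-looking cross term is tamed by the bound $\|\nabla R\|_{L^\infty}\le K$ into $K(A+B)$. Summing the three inequalities yields
\begin{equation*}
\frac{\rm d}{\dt}\big(A+B+\rho\big)\,\lesssim\,\Phi(t)\,\big(A+B+\rho\big)\,,\qquad \Phi\,:=\,\|\nabla\al\|_{L^\infty}+\|\nabla\bt\|_{L^\infty}+\|\nabla u\|_{L^\infty}+\|u\|_{L^\infty}+\|R_0\|_{L^\infty}+K\,,
\end{equation*}
and $\int_0^{T^*}\Phi\,\dt<+\infty$ by the previous step. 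Gronwall's lemma then bounds $A+B+\rho$, hence $\|(R,u,b)\|_{\mbb X^s_r}$, on $[0,T^*[$, which is what we needed.

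The main obstacle is precisely the closure of this top-order estimate: a naive transport estimate at regularity $s>1$ would cost $\int\|\nabla v\|_{B^{s-1}_{\infty,r}}\sim\int(A+B)$ in the exponential, a quantity not controlled by the hypothesis and leading to a Riccati-type inequality. What saves the argument is the interplay of two structural facts: the symmetry of the Els\"asser coupling, which lets every commutator contribution be read off as a Lipschitz norm times a top-order norm, and the cancellation $\div(\bt\cdot\nabla\al)=\nabla\bt:\nabla\al$, which spares one derivative in the pressure term; the remaining non-symmetric coupling through $R$ is defused by the auxiliary integrable/bounded quantities $\|u\|_{L^\infty}$ and $\|\nabla R\|_{L^\infty}$ obtained above. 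I expect the genuinely delicate case to be the endpoint $s=r=1$: there $B^0_{\infty,1}$ fails to be an algebra, so the product $\nabla\bt:\nabla\al$ can no longer be controlled by a plain tame-product inequality, and one must exploit the special structure of the nonlinear terms (the same mechanism used in the lifespan analysis) to recover the $\B$ bound on $g_\al$ without loss.
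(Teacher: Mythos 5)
Your overall strategy coincides with the paper's: reduce everything to a uniform bound of the $\B\times(L^2\cap\B)^2$ norm on $[0,T^*[\,$, restart from a time $t_0$ close to $T^*$ using the uniform lower bound \eqref{est:d-life} on the local existence time, symmetrize via the Els\"asser variables, arrange the top-order estimate so that every coefficient is a Lipschitz norm, and defuse the remaining couplings through the auxiliary quantities $\|u\|_{L^\infty}$ (integrable in time) and $\|\nabla R\|_{L^\infty}$ (bounded). These are exactly the ingredients of the paper's Proposition \ref{p:ContInfinite}. Where you diverge is in how the pressure is eliminated: you apply the Leray projector and estimate the commutator $(\Id-\P)\,\div(\bt\otimes\al)=\nabla(-\Delta)^{-1}(\nabla\bt:\nabla\al)$, while the paper takes the $\curl$ of the equations, estimates the vorticities $X,Y$ in $B^{s-1}_{\infty,r}$, and reconstructs $\al,\bt$ through the Biot--Savart inequality \eqref{eq:abFromCurl}. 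The two routes are essentially dual, and both hinge on a point you leave implicit: neither $\P$ nor $\nabla(-\Delta)^{-1}$ is bounded on $L^\infty$-based Besov spaces, because of the low frequencies. Your bound $\|g_\al\|_\B\lesssim\|\nabla\bt\|_{L^\infty}A+\|\nabla\al\|_{L^\infty}B$ is correct only for the high-frequency part, where the symbol of $\nabla(-\Delta)^{-1}$ is smooth and of order $-1$; the block $\Delta_{-1}$ of the pressure gradient must be handled separately, using $\bt\otimes\al\in L^1$ or $\nabla\pi\in L^2$. This is precisely where the finite-energy hypothesis enters, as it does in the paper's $\|f\|_\B\lesssim\|f\|_{L^2}+\|\curl f\|_{B^{s-1}_{\infty,r}}$ and in its control of $\|\al+\bt\|_{L^\infty}$ by $\|\al+\bt\|_{L^2}+\|\nabla(\al+\bt)\|_{L^\infty}$. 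At the endpoint $s=r=1$ the product $\nabla\bt:\nabla\al$ in $B^0_{\infty,1}$ indeed requires the divergence-free rewriting of Lemma \ref{l:CommBPInfinite}; you correctly flag this as the delicate case.

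One step is not justified as written: the inequality $\frac{\rm d}{\dt}\|R\|_{B^1_{\infty,1}}\lesssim\|\nabla u\|_{L^\infty}\|R\|_{B^1_{\infty,1}}$. At the borderline regularity $s=1$, $p=+\infty$, $r=1$, the transport estimate (Theorem \ref{th:transport} together with Lemma \ref{l:CommBCD}) costs $\|\nabla u\|_{B^0_{\infty,1}}$ in the exponential, not merely $\|\nabla u\|_{L^\infty}$; the Lipschitz norm of the velocity alone controls the growth of $B^s_{\infty,r}$ norms only for $0<s<1$. Fortunately all you need is $\|\nabla R\|_{L^\infty}\le K$, and this follows from the cruder $W^{1,\infty}$ propagation --- differentiate the transport equation and perform an $L^\infty$ estimate, or use the flow map --- which costs exactly $\exp\big(C\int_0^{T^*}\|\nabla u\|_{L^\infty}\dt\big)$; this is the paper's estimate \eqref{est:DR}. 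Replacing the $B^1_{\infty,1}$ detour by this direct bound closes the argument. Your Gagliardo--Nirenberg route to $\int_0^{T^*}\|u\|_{L^\infty}\dt<+\infty$ is a fine alternative to the paper's frequency splitting.
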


From the previous statement, we immediately deduce the following result.
\begin{cor} \label{c:reg}
Let $(s,r)\in\R\times[1,+\infty]$ such that either $s>1$, or $s=r=1$. The lifespan of a solution $(R,u,b)$ to system \eqref{i_eq:MHD-I} does not depend on $(s,r)$.
In particular, the lifespan in the space $\B\times\big(L^2\cap\B\big)\times\big(L^2\cap\B\big)$ is the same as the lifespan in
$B^1_{\infty,1}\times\big(L^2\cap B^1_{\infty,1}\big)\times\big(L^2\cap B^1_{\infty,1}\big)$.
\end{cor}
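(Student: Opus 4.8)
The plan is to deduce Corollary~\ref{c:reg} directly from the continuation criterion of Theorem~\ref{th:cont-crit}, exploiting the fact that the blow-up quantity appearing there, namely $\int_0^{T^*}\big(\|\nabla u\|_{L^\infty}+\|\nabla b\|_{L^\infty}\big)\,\dt$, does \emph{not} involve the regularity indices $(s,r)$. The key observation is that this criterion provides a single, $(s,r)$-independent mechanism controlling finiteness of the lifespan. I would first fix two admissible couples, $(s_1,r_1)$ and $(s_2,r_2)$, both satisfying condition \eqref{i_eq:Lip}, and without loss of generality assume the first corresponds to the lower-regularity space (e.g.\ $s_1=r_1=1$) and the second to a higher one. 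Denote by $T_1$ and $T_2$ the respective lifespans of the solution emanating from a fixed initial datum lying in the intersection $\mbb X^{s_1}_{r_1}\cap\mbb X^{s_2}_{r_2}$.

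The first inclusion, $T_2\le T_1$, is the easy direction and follows from the embedding of Besov spaces: since $\B$ with the higher indices embeds continuously into the one with lower indices (for $(s,r)$ ordered appropriately under \eqref{i_eq:Lip}), any solution of regularity $(s_2,r_2)$ is in particular a solution of regularity $(s_1,r_1)$, so it persists at least as long. The substance is in the reverse inequality $T_1\le T_2$, which I would argue by contradiction. Suppose $T_2<T_1$. Then the solution exists with the lower regularity $(s_1,r_1)$ on a time interval strictly containing $[0,T_2]$; in particular, by uniqueness (Theorem~\ref{th:BesovWP}), the two solutions coincide on $[0,T_2[$, and the lower-regularity solution remains bounded in $\B^{s_1}_{r_1}$ up to and past $T_2$. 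Because condition \eqref{i_eq:Lip} guarantees the embedding $\B^{s_1}_{r_1}\hookrightarrow W^{1,\infty}$, boundedness of the solution in the $(s_1,r_1)$ norm on $[0,T_2]$ forces
\[
\int_0^{T_2}\Big(\big\|\nabla u(t)\big\|_{L^\infty}+\big\|\nabla b(t)\big\|_{L^\infty}\Big)\,\dt<+\infty\,.
\]
But this is precisely the hypothesis of Theorem~\ref{th:cont-crit} applied at the higher regularity level $(s_2,r_2)$, so the solution can be continued beyond $T_2$ in $\mbb X^{s_2}_{r_2}$, contradicting the maximality of $T_2$. Hence $T_1\le T_2$, and combining the two inequalities gives $T_1=T_2$.

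The main obstacle, and the point requiring care rather than routine estimation, is the logical bookkeeping around uniqueness and the regularity at which the continuation criterion is invoked. One must ensure that the two solutions of different regularity are genuinely the \emph{same} object on their common interval of existence—this is where the weak-strong uniqueness remark following Theorem~\ref{th:BesovWP} is essential, since it lets us identify the higher- and lower-regularity solutions without circularity. A second subtlety is verifying that the integral control of $\|\nabla u\|_{L^\infty}+\|\nabla b\|_{L^\infty}$ genuinely follows from $\B^{s_1}_{r_1}$-boundedness: this is immediate from the embedding \eqref{i_eq:Lip2}–\eqref{i_eq:Lip} into $W^{1,\infty}$ together with continuity in time, but it is the linchpin that makes the blow-up criterion insensitive to $(s,r)$. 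Once these two points are secured, the final sentence of the corollary—that the lifespan in $\B\times(L^2\cap\B)^2$ equals that in $B^1_{\infty,1}\times(L^2\cap B^1_{\infty,1})^2$—is just the special case $(s_2,r_2)=(s,r)$ arbitrary and $(s_1,r_1)=(1,1)$.
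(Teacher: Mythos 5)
Your argument is correct and is exactly the route the paper intends: the corollary is stated there as an immediate consequence of the continuation criterion of Theorem~\ref{th:cont-crit}, with no further proof given, and your write-up (embedding of Besov spaces for one inequality, uniqueness plus the $(s,r)$-independent blow-up quantity and the embedding $\B\hookrightarrow W^{1,\infty}$ for the other) is precisely the standard filling-in of that deduction.
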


So far, all our results were independent of the dimension. Now, we focus instead on the special case of space dimension $d = 2$:
exploiting the (formal) proximity of the quasi-homogeneous ideal MHD system \eqref{i_eq:MHD-I} with the incompressible Euler equations in the regime of small
$R$ and $b$, we can prove that the solutions may have a very long lifespan, if the initial density variation $R_0$ and initial magnetic field $b_0$ are small.

\begin{thm} \label{th:lifespan}
Consider an initial datum $\big(R_0, u_0, b_0\big)$ such that $R_0\in B^2_{\infty,1}(\R^2)$ and $u_0, b_0 \in L^2(\R^2;\R^2)\cap B^2_{\infty,1}(\R^2;\R^2)$, with $\div(u_0)=\div(b_0)=0$.

Then, the lifespan $T>0$ of the corresponding solution $(R, u, b)$ of the $2$-D quasi-homogeneous ideal MHD problem \eqref{i_eq:MHD-I}, given by Theorem \ref{th:BesovWP} above,
enjoys the following lower bound:
\begin{equation*}
T\,\geq\,\frac{C}{\|R_0\|_{B^2_{\infty,1}}\,+\,\big\| \big(u_0, b_0\big) \big\|_{B^2_{\infty, 1} \cap L^2 }}\;
\bigg[\log\big(1\,+\,C\,\cdot\,\big)\bigg]^{\bigcirc5}\left(\frac{\big\| \big(u_0, b_0\big) \big\|_{B^1_{\infty, 1} \cap L^2} }{\big\| \big(R_0, b_0\big) \big\|_{B^1_{\infty, 1}}}\right)\,,
\end{equation*}
where $C>0$ is a ``universal'' constant, independent of the initial datum, and $\big[\log(1+C\,\cdot\,)\big]^{\bigcirc n}$ is the $n$-th iterated logarithm function $z\mapsto \log(1+Cz)$.
\end{thm}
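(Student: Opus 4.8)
The plan is to work with the Els\"asser formulation of \eqref{i_eq:MHD-I} and pass to its vorticity form, so as to exploit the improved $s=0$ transport estimates recalled in Theorem \ref{th:AnnInnLinTV}. Setting $\al=u+b$, $\bt=u-b$ and taking the planar $\curl$ (i.e. $\curl v=\d_1v^2-\d_2v^1$) of the two Els\"asser momentum equations, I first derive transport equations for $X:=\curl\al$ and $Y:=\curl\bt$ of the form
\begin{equation*}
\d_t X+(\bt\cdot\nabla)X=g_X,\qquad \d_t Y+(\al\cdot\nabla)Y=g_Y,
\end{equation*}
together with the pure transport equation $\d_tR+(u\cdot\nabla)R=0$. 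The pressure gradients drop out under the $\curl$, and the key algebraic observation is that $g_X,g_Y$ collect exactly the bilinear "vortex-stretching"-type terms that survive only because $\al\neq\bt$: a direct computation using $\div\al=\div\bt=0$ shows that these terms vanish identically when $\al=\bt$ (that is, when $b\equiv0$), so that, after subtracting the trivial contribution, $g_X$ and $g_Y$ are \emph{linear} in $b=(\al-\bt)/2$ and in $R$ (the latter through $\curl(R\,\mf Cu)$). This structural fact is the cornerstone of the whole argument.

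The core estimate then applies Theorem \ref{th:AnnInnLinTV} to $X$ and $Y$ at the $B^0_{\infty,1}$ level, producing a bound for $\|X(t)\|_{B^0_{\infty,1}}+\|Y(t)\|_{B^0_{\infty,1}}$ that depends only \emph{linearly} on $\int_0^t(\|\nabla\al\|_{L^\infty}+\|\nabla\bt\|_{L^\infty})$; by Biot-Savart and the energy bound \eqref{eq:BasicEN}, these Lipschitz norms are in turn controlled by $\|X\|_{B^0_{\infty,1}}+\|Y\|_{B^0_{\infty,1}}$ plus the (bounded) $L^2$ energy. The delicate point is to estimate $g_X,g_Y$ in $B^0_{\infty,1}$, which is \emph{not} an algebra. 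Here I would combine the linear-in-$(R,b)$ structure with Bony's paraproduct decomposition, splitting each product so that the factor carrying the small quantity ($R$ or $b$) is measured in the low norm $B^1_{\infty,1}$ while the other factor is measured one derivative higher, in $B^2_{\infty,1}$. The constraint $s_1+s_2>0$ on the Bony remainder forces precisely this loss of one derivative, and explains why a $B^2_{\infty,1}$ control on $(R,u,b)$ is unavoidable.

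Closing the scheme therefore requires a second, higher-order tier: since $B^2_{\infty,1}$ \emph{is} an algebra, standard transport estimates in $\B$ give $\|(R,u,b)(t)\|_{B^2_{\infty,1}}\lesssim\|(R_0,u_0,b_0)\|_{B^2_{\infty,1}}\exp\big(C\int_0^t(\|\nabla u\|_{L^\infty}+\|\nabla b\|_{L^\infty})\big)$, in the spirit of the continuation criterion of Theorem \ref{th:cont-crit}: the high norm stays finite as long as the low-tier Lipschitz bound does. Collecting the three ingredients yields a closed system of (integro-)differential inequalities for $W(t):=\int_0^t(\|\nabla u\|_{L^\infty}+\|\nabla b\|_{L^\infty})$, the small quantity $\sigma(t):=\|(R,b)(t)\|_{B^1_{\infty,1}}$ (of initial size $\veps$), and the high norm $\mathcal H(t)$: schematically, $\mathcal H\lesssim\mathcal H_0\,e^{CW}$, $\sigma\lesssim\sigma_0\,e^{CW}$, and $W'\lesssim\big(\mathcal A_0+\int_0^t\sigma\,\mathcal H\big)(1+W)+E$, so that the feedback of the \emph{small} sources keeps the velocity vorticity close to the globally defined $2$-D Euler dynamics.

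The final step is to extract the lifespan from this system, and this is where I expect the main difficulty to lie. The mechanism producing the iterated logarithm is the repeated nesting of exponential bounds: solving for the blow-up time of $W$ forces one to invert a tower of exponentials, each coupled quantity with exponential feedback contributing one application of $z\mapsto\log(1+Cz)$. Careful bookkeeping should attribute three iterations to the $b$-feedback through the Els\"asser sources (whence $\log^3$ when $R\equiv0$), and the two remaining ones to the transport of $R$ in $B^1_{\infty,1}$, which — lacking the $s=0$ improvement — grows exponentially and re-enters $g_X,g_Y$, giving $\log^5$ in general; the prefactor $1/\big(\|R_0\|_{B^2_{\infty,1}}+\|(u_0,b_0)\|_{B^2_{\infty,1}\cap L^2}\big)$ records that the high norm sets the time scale, while the argument $\|(u_0,b_0)\|_{B^1_{\infty,1}\cap L^2}/\|(R_0,b_0)\|_{B^1_{\infty,1}}\sim1/\veps$ is exactly the inverse smallness of the perturbation. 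The genuine obstacle is that the two hard steps interact: one must bound the sources in the non-algebra $B^0_{\infty,1}$ while keeping the small factor at the low regularity level (forcing the derivative loss and the $B^2$ hypothesis), and then run the ODE analysis sharply enough to obtain the exact number of iterated logarithms rather than a weaker, single-logarithm, bound.
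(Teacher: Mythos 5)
Your proposal follows the paper's proof in all its essentials: Els\"asser variables, the vorticity formulation, the observation that in two space dimensions the bilinear source satisfies $\L(\nabla\al,\nabla\bt)=-2\,\L(\nabla u,\nabla b)$ and is therefore linear in $b$ (with the other source linear in $R$), the Vishik/Hmidi--Keraani estimate at the $B^0_{\infty,1}$ level combined with Biot--Savart and the energy bound, a two-tier low-norm/high-norm scheme closed by a bootstrap, and even the accounting of the iterated logarithms (three from the $b$-feedback, one more from $R\neq0$ entering the source, one more from the exponential-in-time energy growth caused by $R\,\mf Cu$).

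The one place where your reasoning departs from what actually works is the attribution of the derivative loss. The Bony remainder in the $B^0_{\infty,1}$ estimate of $\L(\nabla u,\nabla b)$ does \emph{not} force a $B^2_{\infty,1}$ control: using $\div (u)=\div (b)=0$ one rewrites the remainder part of $\L$ as $\sum_k\big(\partial_k\mc R(u_i,\partial_j b_k)-\partial_k\mc R(u_j,\partial_i b_k)\big)$, and the continuity of $\mc R$ on $B^{1}_{\infty,\infty}\times B^{0}_{\infty,1}$ then yields $\|\L(\nabla u,\nabla b)\|_{B^0_{\infty,1}}\lesssim\|\nabla u\|_{L^\infty}\|b\|_{B^1_{\infty,1}}+\|\nabla b\|_{L^\infty}\|u\|_{B^1_{\infty,1}}$ with no loss whatsoever (this is Lemma \ref{l:CommBPInfinite}). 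The genuine loss --- and the actual reason the $B^2_{\infty,1}$ hypothesis is unavoidable --- occurs when you propagate the \emph{low} norm of $b$ itself: to keep the source linear in the small datum you must bound $\|b(t)\|_{B^1_{\infty,1}}$ by $\|b_0\|_{B^1_{\infty,1}}$ times a controlled factor, and the magnetic field equation $\partial_tb+(u\cdot\nabla)b=(b\cdot\nabla)u$ has a source whose $B^1_{\infty,1}$ norm costs $\|b\|_{B^1_{\infty,1}}\|u\|_{B^2_{\infty,1}}$. Consequently your schematic bound $\sigma\lesssim\sigma_0\,e^{CW}$, with $W$ the time integral of the Lipschitz norms only, is not justified as stated: the correct exponent is $C\int_0^t\|u\|_{B^2_{\infty,1}}$, and it is precisely this coupling of the low tier to the high tier (together with the exponential growth of $\|\nabla R\|_{L^\infty}$ inside the high-norm estimate) that produces the extra nestings of exponentials and hence the precise count of iterated logarithms. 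With that correction, your scheme is exactly the paper's.
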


\begin{rmk} \label{r:lifespan}
As will appear clear in the proof (see Section \ref{s:lifespan}), if $R \equiv 0$ we can replace the fifth iterated logarithm by the better $\big[\log(1+C\,\cdot\,)\big]^{\bigcirc3}$ function.
If instead $R\neq0$ but the matrix $\mf C$ is skew-symmetric, \tsl{i.e.} under the assumption that \eqref{eq:MatrixC} holds (up to a multiplicative constant), then we can replace the fifth iterated logarithm by the
$\big[\log(1+C\,\cdot\,)\big]^{\bigcirc4}$ function.
\end{rmk}

In particular, by taking $R\equiv0$, we see that Theorem \ref{th:lifespan} holds as well for solutions to the classical ideal MHD system \eqref{i_eq:ideal}. Then, solutions
to that system also enjoy the improved lifespan in space dimension $d=2$, and the ``asymptotically global'' well-posedness result of Theorem \ref{t_i:life} also holds true
for \eqref{i_eq:ideal}.

\section{Tools} \label{s:tools}

In this section we recall some tools, mainly from Fourier analysis, which we are going to employ in our study.
Subsections \ref{ss:LP} and \ref{ss:NHPC} are devoted to recall some basic definitions and properties of Littlewood-Paley theory, Besov spaces and paradifferential calculus.
Those notions will find application in Subsection \ref{ss:transport}, which focuses on the study of transport equations in Besov spaces.
Finally, in Subsection \ref{ss:int} we present in full detail a key integrability lemma.

\subsection{Non-homogeneous Littlewood-Paley theory and Besov spaces} \label{ss:LP}

Here we recall the main ideas of Littlewood-Paley theory. We refer to Chapter 2 of \cite{BCD} for details.
For simplicity of exposition, we deal with the $\R^d$ case; however, everything can be adapted to the torus $\T^d$ with minor modifications.

First of all, let us introduce the so-called ``Littlewood-Paley decomposition'', based on a non-homogeneous dyadic partition of unity with
respect to the Fourier variable. 
We fix a smooth radial function $\chi$ supported in the ball $B(0,2)$, equal to $1$ in a neighborhood of $B(0,1)$
and such that $r\mapsto\chi(r\,e)$ is nonincreasing over $\R_+$ for all unitary vectors $e\in\R^d$. Set
$\varphi\left(\xi\right)=\chi\left(\xi\right)-\chi\left(2\xi\right)$ and
$\vphi_j(\xi):=\vphi(2^{-j}\xi)$ for all $j\geq0$.
The dyadic blocks $(\Delta_j)_{j\in\Z}$ are defined by\footnote{Throughout we agree  that  $f(D)$ stands for 
the pseudo-differential operator $u\mapsto\mc{F}^{-1}[f(\xi)\,\what u(\xi)]$.} 
$$
\Delta_j\,:=\,0\quad\mbox{ if }\; j\leq-2,\qquad\Delta_{-1}\,:=\,\chi(D)\qquad\mbox{ and }\qquad
\Delta_j\,:=\,\varphi(2^{-j}D)\quad \mbox{ if }\;  j\geq0\,.
$$
We  also introduce the following low frequency cut-off operator:
\begin{equation} \label{eq:S_j}
S_ju\,:=\,\chi(2^{-j}D)\,=\,\sum_{k\leq j-1}\Delta_{k}\qquad\mbox{ for }\qquad j\geq0\,.
\end{equation}
Note that $S_j$ is a convolution operator. More precisely, if we denote $\mc F(f)\,=\,\what f$ the Fourier transform of a function $f$ and $\mc F^{-1}$
the inverse Fourier transform, after defining
$$
K_0\,:=\,\mc F^{-1}\chi\qquad\qquad\mbox{ and }\qquad\qquad K_j(x)\,:=\,\mathcal{F}^{-1}[\chi (2^{-j}\cdot)] (x) = 2^{jd}K_0(2^j x)\,,
$$
we have, for all $j\in\N$ and all tempered distributions $u\in\mc S'$, that $S_ju\,=\,K_j\,*\,u$.
Thus the $L^1$ norm of $K_j$ is independent of $j\geq0$, hence $S_j$ maps continuously $L^p$ into itself, for any $1 \leq p \leq +\infty$.


The following property holds true: for any $u\in\mc{S}'$, one has the equality~$u=\sum_{j}\Delta_ju$ in the sense of $\mc{S}'$.
Let us also mention the so-called \emph{Bernstein inequalities}, which explain the way derivatives act on spectrally localized functions.
  \begin{lemma} \label{l:bern}
Let  $0<r<R$.   A constant $C$ exists so that, for any nonnegative integer $k$, any couple $(p,q)$ 
in $[1,+\infty]^2$, with  $p\leq q$,  and any function $u\in L^p$,  we  have, for all $\lambda>0$,
$$
\displaylines{
{\Supp}\, \widehat u \subset   B(0,\lambda R)\quad
\Longrightarrow\quad
\|\nabla^k u\|_{L^q}\, \leq\,
 C^{k+1}\,\lambda^{k+d\left(\frac{1}{p}-\frac{1}{q}\right)}\,\|u\|_{L^p}\;;\cr
{\Supp}\, \widehat u \subset \{\xi\in\R^d\,|\, r\lambda\leq|\xi|\leq R\lambda\}
\quad\Longrightarrow\quad C^{-k-1}\,\lambda^k\|u\|_{L^p}\,
\leq\,
\|\nabla^k u\|_{L^p}\,
\leq\,
C^{k+1} \, \lambda^k\|u\|_{L^p}\,.
}$$
\end{lemma}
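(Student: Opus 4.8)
The plan is to realize each differential operator as a convolution against a rescaled kernel and then to apply Young's convolution inequality, the whole point being that spectral localization lets us insert a smooth Fourier multiplier for free. Recall that $\nabla^k u$ is the collection of the derivatives $\partial^\alpha u$ with $|\alpha|=k$. For the first inequality, since $\Supp\widehat u\subset B(0,\lambda R)$, I would fix once and for all a function $\phi\in\mc D(\R^d)$ equal to $1$ on $B(0,R)$; then $\phi(\lambda^{-1}\xi)\equiv1$ on $\Supp\widehat u$, so that for each multi-index $\alpha$ with $|\alpha|=k$ one has $\widehat{\partial^\alpha u}(\xi)=(i\xi)^\alpha\,\phi(\lambda^{-1}\xi)\,\widehat u(\xi)$, whence $\partial^\alpha u=g_{\lambda,\alpha}*u$ with $g_{\lambda,\alpha}:=\mc F^{-1}\bigl[(i\xi)^\alpha\,\phi(\lambda^{-1}\,\cdot\,)\bigr]$. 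A change of variables in the inverse Fourier transform gives the scaling relation $g_{\lambda,\alpha}(x)=\lambda^{k+d}\,g_{1,\alpha}(\lambda x)$, where $g_{1,\alpha}:=\mc F^{-1}[(i\xi)^\alpha\phi]$ is a fixed Schwartz function independent of $\lambda$.

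With this representation, I would apply Young's inequality $\|g_{\lambda,\alpha}*u\|_{L^q}\le\|g_{\lambda,\alpha}\|_{L^m}\,\|u\|_{L^p}$ with $1/m=1-1/p+1/q$; the hypothesis $p\le q$ is precisely what guarantees $m\ge1$, so that this exponent is admissible. Computing the kernel norm by scaling, $\|g_{\lambda,\alpha}\|_{L^m}=\lambda^{k+d(1-1/m)}\,\|g_{1,\alpha}\|_{L^m}$, and since $d(1-1/m)=d(1/p-1/q)$, I obtain $\|\partial^\alpha u\|_{L^q}\le C\,\lambda^{k+d(1/p-1/q)}\,\|u\|_{L^p}$; taking the supremum (or sum) over the finitely many $\alpha$ with $|\alpha|=k$ yields the stated bound for $\|\nabla^k u\|_{L^q}$.

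For the second inequality, the upper bound is obtained exactly as above, with $\phi$ replaced by a function $\tilde\phi\in\mc D(\R^d\setminus\{0\})$ equal to $1$ on the annulus $\{r\le|\xi|\le R\}$ and with $m=1$ (hence $p=q$), using $\|g_{\lambda,\alpha}\|_{L^1}=\lambda^k\|g_{1,\alpha}\|_{L^1}$. The lower bound is where the annular support really matters: on $\Supp\widehat u$ one has $|\xi|\ge r\lambda>0$, so $|\xi|^{2k}$ is invertible there, and I would use the algebraic identity $|\xi|^{2k}=\sum_{|\alpha|=k}\binom{k}{\alpha}\,\xi^{2\alpha}$ together with $\xi^{2\alpha}=(i\xi)^\alpha\,(-i\xi)^\alpha$ to write $\widehat u=\tilde\phi(\lambda^{-1}\xi)\,|\xi|^{-2k}\sum_{|\alpha|=k}\binom{k}{\alpha}(-i\xi)^\alpha\,\widehat{\partial^\alpha u}$. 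This expresses $u$ as a finite sum $u=\sum_{|\alpha|=k}\binom{k}{\alpha}\,m_{\lambda,\alpha}*\partial^\alpha u$, where the kernel $m_{\lambda,\alpha}=\mc F^{-1}\bigl[\tilde\phi(\lambda^{-1}\xi)(-i\xi)^\alpha|\xi|^{-2k}\bigr]$ scales as $\lambda^{-k}$ in $L^1$; Young's inequality with $m=1$ then gives $\|u\|_{L^p}\le C\,\lambda^{-k}\sum_{|\alpha|=k}\binom{k}{\alpha}\,\|\partial^\alpha u\|_{L^p}$, which is the desired lower bound.

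The scaling computations and the applications of Young's inequality are routine; the genuinely delicate point, and the one I would treat with care, is the tracking of the constant $C^{k+1}$, namely the dependence on the differentiation order $k$. This reduces to showing that the $L^1$ (and $L^m$) norms of the fixed kernels $g_{1,\alpha}$ and $\mc F^{-1}[\tilde\phi(-i\xi)^\alpha|\xi|^{-2k}]$ grow at most geometrically in $k=|\alpha|$. I would establish this through the decay estimate $|g_{1,\alpha}(x)|\,(1+|x|)^{d+1}\le C\sup_{|\beta|\le d+1}\|\partial^\beta\bigl((i\xi)^\alpha\phi\bigr)\|_{L^1}$: expanding $\partial^\beta\bigl((i\xi)^\alpha\phi\bigr)$ by the Leibniz rule, at most $d+1$ derivatives can fall on $(i\xi)^\alpha$, each producing a factor bounded by $k$, while $|(i\xi)^\alpha|\le 2^k$ on the fixed support of $\phi$; together with the combinatorial factors $\binom{k}{\alpha}$ and $\sum_{|\alpha|=k}\binom{k}{\alpha}=d^k$, all these contributions are dominated by $C^{k+1}$ for a suitable constant $C$. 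This geometric-in-$k$ control of the kernel norms is exactly what upgrades the bounds above to the claimed form with constant $C^{k+1}$.
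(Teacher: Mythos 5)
The paper does not prove this lemma: it is recalled verbatim from Chapter~2 of \cite{BCD}, so there is no internal proof to compare against. Your argument is correct and is essentially the standard one from that reference --- spectral localisation realised as convolution with dilated Schwartz kernels, Young's inequality with $1/m = 1 - 1/p + 1/q$ (admissible precisely because $p\leq q$), and, for the lower bound, inversion of $|\xi|^{2k}=\sum_{|\alpha|=k}\binom{k}{\alpha}\xi^{2\alpha}$ on the annulus; your geometric-in-$k$ tracking of the kernel norms, which is indeed the only delicate point in obtaining the constant $C^{k+1}$, is also sound.
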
   

By use of Littlewood-Paley decomposition, we can define the class of Besov spaces.
\begin{defi} \label{d:B}
  Let $s\in\R$ and $1\leq p,r\leq+\infty$. The \emph{non-homogeneous Besov space}
$B^{s}_{p,r}\,=\,B^s_{p,r}(\R^d)$ is defined as the subset of tempered distributions $u$ for which
$$
\|u\|_{B^{s}_{p,r}}\,:=\,
\left\|\left(2^{js}\,\|\Delta_ju\|_{L^p}\right)_{j\geq -1}\right\|_{\ell^r}\,<\,+\infty\,.
$$
\end{defi}
Besov spaces are interpolation spaces between Sobolev spaces. In fact, for any $k\in\N$ and~$p\in[1,+\infty]$,
we have the following chain of continuous embeddings:
$$
B^k_{p,1}\hookrightarrow W^{k,p}\hookrightarrow B^k_{p,\infty}\,,
$$
where  $W^{k,p}$ stands for the classical Sobolev space of $L^p$ functions with all the derivatives up to the order $k$ in $L^p$.
When $1<p<+\infty$, we can refine the previous result (this is the non-homogeneous version of Theorems 2.40 and 2.41 in \cite{BCD}): we have
$B^k_{p, \min (p, 2)}\hookrightarrow W^{k,p}\hookrightarrow B^k_{p, \max(p, 2)}$.
In particular, for all $s\in\R$, we deduce the equivalence $B^s_{2,2}\equiv H^s$, with equivalence of norms.

As an immediate consequence of the first Bernstein inequality, one gets the following embedding result.
\begin{prop}\label{p:embed}
The space $B^{s_1}_{p_1,r_1}$ is continuously embedded in the space $B^{s_2}_{p_2,r_2}$ for all indices satisfying $p_1\,\leq\,p_2$ and
$$
s_2\,<\,s_1-d\left(\frac{1}{p_1}-\frac{1}{p_2}\right)\qquad\qquad\mbox{ or }\qquad\qquad
s_2\,=\,s_1-d\left(\frac{1}{p_1}-\frac{1}{p_2}\right)\;\;\mbox{ and }\;\;r_1\,\leq\,r_2\,. 
$$
\end{prop}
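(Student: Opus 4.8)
The plan is to prove the embedding block by block, reducing everything to the first Bernstein inequality of Lemma~\ref{l:bern}. First I would fix $u \in B^{s_1}_{p_1,r_1}$ and recall that, for each $j \geq -1$, the dyadic block $\Delta_j u$ is spectrally supported in a ball $B(0,\lambda R)$ with $\lambda \sim 2^j$ (for $j=-1$ one simply has support in $B(0,2)$). Applying the first Bernstein inequality with $k=0$ and the exponents $p_1 \leq p_2$ therefore yields, uniformly in $j$,
$$
\|\Delta_j u\|_{L^{p_2}} \,\leq\, C\, 2^{jd\left(\frac{1}{p_1} - \frac{1}{p_2}\right)}\, \|\Delta_j u\|_{L^{p_1}}\,.
$$

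Next I would multiply this inequality by $2^{js_2}$ and set $\sigma := s_1 - s_2 - d\left(\tfrac{1}{p_1} - \tfrac{1}{p_2}\right)$, which is $\geq 0$ under either set of hypotheses. Rewriting the exponent gives
$$
2^{js_2}\, \|\Delta_j u\|_{L^{p_2}} \,\leq\, C\, 2^{-j\sigma}\, \Big( 2^{js_1}\, \|\Delta_j u\|_{L^{p_1}} \Big)\,,
$$
so that everything reduces to a statement about the nonnegative sequence $c_j := 2^{js_1}\|\Delta_j u\|_{L^{p_1}}$, which belongs to $\ell^{r_1}$ with $\|(c_j)_j\|_{\ell^{r_1}} = \|u\|_{B^{s_1}_{p_1,r_1}}$, and the weight $2^{-j\sigma}$.

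The argument then splits into the two cases appearing in the statement. In the endpoint case $\sigma = 0$ (that is, $s_2 = s_1 - d(1/p_1 - 1/p_2)$), the inequality reads $2^{js_2}\|\Delta_j u\|_{L^{p_2}} \leq C\, c_j$, and it suffices to take the $\ell^{r_2}$ norm and invoke the elementary embedding $\ell^{r_1} \hookrightarrow \ell^{r_2}$, valid precisely because $r_1 \leq r_2$; this is the only place where the constraint on the last indices is used. In the strict case $\sigma > 0$ (with no restriction on $r_1,r_2$), I would instead bound $c_j \leq \|u\|_{B^{s_1}_{p_1,r_1}}$ using $\ell^{r_1} \hookrightarrow \ell^\infty$, whence
$$
2^{js_2}\|\Delta_j u\|_{L^{p_2}} \,\leq\, C\, 2^{-j\sigma}\, \|u\|_{B^{s_1}_{p_1,r_1}}\,,
$$
and conclude by taking the $\ell^{r_2}$ norm, since $\sum_{j \geq -1} 2^{-j\sigma r_2} < +\infty$ (a geometric series, as $\sigma > 0$), with the obvious supremum bound when $r_2 = +\infty$.

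There is no deep obstacle here: the whole content is the single application of Bernstein's inequality, and the only point requiring a little care is the clean separation of the two cases. In particular, one should notice that the hypothesis $r_1 \leq r_2$ is needed \emph{only} in the borderline case $\sigma = 0$, whereas for $\sigma > 0$ the decay of the geometric weight $2^{-j\sigma}$ makes the indices $r_1,r_2$ irrelevant. A minor technical check is that the lowest block $j=-1$, whose spectrum is a ball rather than an annulus, is covered by the very same (first, ball-type) Bernstein estimate, so that no separate treatment is required.
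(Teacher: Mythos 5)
Your proof is correct and follows exactly the route the paper intends: the paper gives no written proof but states the proposition as "an immediate consequence of the first Bernstein inequality," which is precisely the block-by-block argument you carry out, including the correct case split on $\sigma=0$ versus $\sigma>0$ and the observation that the lowest block $j=-1$ is covered by the ball-type estimate. Nothing to add.
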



In particular, we get the following chain of continuous embeddings: provided that $(s, p, r) \in \mathbb{R} \times [1, +\infty]^2$ satisfies the condition
\begin{equation}\label{eq:AnnLInfty}
s > \frac{d}{p} \qquad\qquad \text{ or } \qquad\qquad s = \frac{d}{p}\quad \text{ and }\quad r = 1\,,
\end{equation}
then we have
\begin{equation*}
B^s_{p,r} \hookrightarrow B^{s - \frac{d}{p}}_{\infty, r} \hookrightarrow B^0_{\infty, 1} \hookrightarrow L^\infty\,.
\end{equation*}

To conclude this part, let us discuss continuity properties of the Leary projector $\P$ on Besov spaces, whose use was fundamental in the approach of \cite{Cobb-F_Rig}.
First of all, we recall that $\mathbb{P}$ is the $L^2$-orthogonal projection on the subspace of divergence-free functions. It is defined by the formula
\begin{equation} \label{eq:P-op}
\mathbb{P}\,=\,m(D)\,:=\, \Id \,+\, \nabla (- \Delta)^{-1} \D\,,
\end{equation}
where $m=m(\xi)$ denotes the symbol of $\P$.
The previous formula has to be interpreted in the sense of Fourier multipliers:
\begin{equation}\label{eq:LerayFM}
\forall\, f \in L^2(\mathbb{R}^d ; \mathbb{R}^d), \qquad\qquad \what{(\mathbb{P}f)_j}(\xi) = \sum_{k=1}^d\left( 1 - \frac{\xi_j \xi_k}{|\xi|^2} \right) \what{f_k}(\xi)\,.
\end{equation}
In general, if $f \in \mathcal{S}'$ is a tempered distribution, the projection $\mathbb{P}f$ is well defined (as a tempered distribution) as long as the product
$m(\xi) \what{f}(\xi)$ is also well defined as a tempered distribution, which is the case if {\tsl{e.g.}} $\what{f}$ is in $L^1_{\rm loc}(\R^d)$.

We also recall that $\P$ is a singular integral operator. By Calder\'on-Zygmund theory, it is therefore a continuous operator from $L^p$ into itself for any $1<p<+\infty$.
Thanks to this property, it is easy to see that $\P$ is also continuous  from $B^s_{p,r}$ into itself, for all $(s, r) \in \mathbb{R} \times [1, +\infty]$, as long as $1 < p < +\infty$.

In the case where $p = 1$ or $p = +\infty$, however, the previous continuity property is no longer true.
Even worse, $\P$ is ill-defined as a Fourier multiplier on $L^\infty$. We refer to the last paragraph of Subsection \ref{ss:integrab} for more comments about this issue.
In this article, where we investigate well-posedness in the $\B$ framework (\tsl{i.e.} for $p=+\infty$), we will resort to the vorticity formulation of the equations in order to avoid the use of $\P$.

\subsection{Non-homogeneous paradifferential calculus}\label{ss:NHPC}

Let us now introduce the paraproduct operator (after J.-M. Bony, see \cite{Bony}). Once again, we refer to Chapter 2 of \cite{BCD} for full details.
Constructing the paraproduct operator relies on the observation that, 
formally, the product  of two tempered distributions $u$ and $v$ may be decomposed into 
\begin{equation*} 
u\,v\;=\;\mathcal{T}_u(v)\,+\,\mathcal{T}_v(u)\,+\,\mathcal{R}(u,v)\,,
\end{equation*}
where we have defined
$$
\mathcal{T}_u(v)\,:=\,\sum_jS_{j-1}u\,\Delta_j v\qquad\qquad\mbox{ and }\qquad\qquad
\mathcal{R}(u,v)\,:=\,\sum_j\sum_{|j'-j|\leq1}\Delta_j u\,\Delta_{j'}v\,.
$$
The above operator $\mc T$ is called ``paraproduct'' whereas
$\mc R$ is called ``remainder''.
The paraproduct and remainder operators have many nice continuity properties. 
The following ones will be of constant use in this paper.
\begin{prop}\label{p:op}
For any $(s,p,r)\in\R\times[1,+\infty]^2$ and $t>0$, the paraproduct operator 
$\mathcal{T}$ maps continuously $L^\infty\times B^s_{p,r}$ in $B^s_{p,r}$ and  $B^{-t}_{\infty,\infty}\times B^s_{p,r}$ in $B^{s-t}_{p,r}$.
Moreover, the following estimates hold:
$$
\|\mathcal{T}_u(v)\|_{B^s_{p,r}}\,\leq\, C\,\|u\|_{L^\infty}\,\|\nabla v\|_{B^{s-1}_{p,r}}\qquad\mbox{ and }\qquad
\|\mathcal{T}_u(v)\|_{B^{s-t}_{p,r}}\,\leq\, C\|u\|_{B^{-t}_{\infty,\infty}}\,\|\nabla v\|_{B^{s-1}_{p,r}}\,.
$$
For any $(s_1,p_1,r_1)$ and $(s_2,p_2,r_2)$ in $\R\times[1,+\infty]^2$ such that 
$s_1+s_2>0$, $1/p:=1/p_1+1/p_2\leq1$ and~$1/r:=1/r_1+1/r_2\leq1$,
the remainder operator $\mathcal{R}$ maps continuously~$B^{s_1}_{p_1,r_1}\times B^{s_2}_{p_2,r_2}$ into~$B^{s_1+s_2}_{p,r}$.
In the case $s_1+s_2=0$, provided $r=1$, the operator $\mathcal{R}$ is continuous from $B^{s_1}_{p_1,r_1}\times B^{s_2}_{p_2,r_2}$ with values
in $B^{0}_{p,\infty}$.
\end{prop}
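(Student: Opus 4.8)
The plan is to exploit the \emph{spectral localization} of the dyadic summands of $\mathcal{T}_u(v)$ and $\mathcal{R}(u,v)$, combined with the Bernstein inequalities (Lemma \ref{l:bern}) and the uniform bound $\|S_ju\|_{L^\infty}\leq C\,\|u\|_{L^\infty}$ recalled above. The guiding principle is that each summand is frequency-localized, so its contribution to a Besov norm can be read off directly. The decisive structural distinction is that the building blocks of the paraproduct are supported in dyadic \emph{annuli}, whereas those of the remainder are supported in \emph{balls}: this is exactly what accounts for the extra restriction $s_1+s_2\geq0$ in the second half of the statement.

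First I would treat the paraproduct. Since $\Supp\widehat{\Delta_j v}$ lies in a dyadic annulus $\{|\xi|\sim2^j\}$ while $\Supp\widehat{S_{j-1}u}$ lies in a ball of radius $\lesssim2^j$, the product $S_{j-1}u\,\Delta_jv$ has Fourier support in a fixed dyadic annulus $2^j\widetilde{\mathcal{C}}$; moreover only indices $j\geq1$ contribute, as $S_{j-1}\equiv0$ for $j\leq0$. Hölder's inequality together with $\|S_{j-1}u\|_{L^\infty}\leq C\,\|u\|_{L^\infty}$ gives $\|S_{j-1}u\,\Delta_jv\|_{L^p}\leq C\,\|u\|_{L^\infty}\,\|\Delta_jv\|_{L^p}$, and since $j\geq1$ the second Bernstein inequality turns $\|\Delta_jv\|_{L^p}$ into $C\,2^{-j}\|\Delta_j\nabla v\|_{L^p}$, which is precisely what produces the factor $\|\nabla v\|_{B^{s-1}_{p,r}}$. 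The annular localization then permits summing over $j$ in $\ell^r$ after regrouping into genuine dyadic blocks, yielding the first estimate. For the second estimate the only change is the bound on $\|S_{j-1}u\|_{L^\infty}$: writing $S_{j-1}u=\sum_{k\leq j-2}\Delta_ku$ and using $\|\Delta_ku\|_{L^\infty}\leq2^{kt}\|u\|_{B^{-t}_{\infty,\infty}}$, the hypothesis $t>0$ makes the geometric series converge and yields $\|S_{j-1}u\|_{L^\infty}\leq C\,2^{jt}\|u\|_{B^{-t}_{\infty,\infty}}$; carrying this factor through shifts the regularity by $t$ and produces the target space $B^{s-t}_{p,r}$.

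For the remainder I would use the decomposition $\mathcal{R}(u,v)=\sum_\nu R_\nu$ with $R_\nu:=\Delta_\nu u\,\widetilde\Delta_\nu v$, where $\widetilde\Delta_\nu:=\Delta_{\nu-1}+\Delta_\nu+\Delta_{\nu+1}$. The crucial point is that now \emph{both} factors are frequency-localized below $2^\nu$, so $R_\nu$ has Fourier support in a ball $B(0,c\,2^\nu)$ rather than in an annulus. By Hölder's inequality (with $1/p=1/p_1+1/p_2$) one has $\|R_\nu\|_{L^p}\leq\|\Delta_\nu u\|_{L^{p_1}}\|\widetilde\Delta_\nu v\|_{L^{p_2}}$, whence
\[
2^{\nu(s_1+s_2)}\|R_\nu\|_{L^p}\leq\big(2^{\nu s_1}\|\Delta_\nu u\|_{L^{p_1}}\big)\big(2^{\nu s_2}\|\widetilde\Delta_\nu v\|_{L^{p_2}}\big),
\]
and a discrete Hölder inequality in $\nu$ (with $1/r=1/r_1+1/r_2$) bounds the right-hand side by $C\,\|u\|_{B^{s_1}_{p_1,r_1}}\|v\|_{B^{s_2}_{p_2,r_2}}$. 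To conclude that $\mathcal{R}(u,v)\in B^{s_1+s_2}_{p,r}$ I would invoke the standard reconstruction lemma for series whose terms are supported in balls $B(0,c\,2^\nu)$, which requires the regularity index to be \emph{strictly positive} — exactly the hypothesis $s_1+s_2>0$. At the endpoint $s_1+s_2=0$ this lemma is unavailable, so I would argue directly: as $\Supp\widehat{R_\nu}\subset B(0,c\,2^\nu)$, one has $\Delta_k R_\nu\equiv0$ unless $\nu\geq k-N_0$ for a fixed $N_0$, hence $\|\Delta_k\mathcal{R}(u,v)\|_{L^p}\leq C\sum_{\nu\geq k-N_0}\|R_\nu\|_{L^p}$; with $s_1+s_2=0$ and $r=1$ the previous bound gives $\sum_\nu\|R_\nu\|_{L^p}\leq C\,\|u\|_{B^{s_1}_{p_1,r_1}}\|v\|_{B^{s_2}_{p_2,r_2}}$, uniformly in $k$, which is precisely $\mathcal{R}(u,v)\in B^0_{p,\infty}$.

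I expect the point demanding the most care to be the bookkeeping of spectral supports: recognizing that the \emph{ball} (rather than annulus) localization of the remainder pieces is what forces the constraint $s_1+s_2\geq0$ and degrades the target space to $B^0_{p,\infty}$ at the borderline, while the clean annular localization of the paraproduct imposes no lower bound on $s$.
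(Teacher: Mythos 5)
Your argument is correct and is essentially the standard proof of these continuity properties (Bony's paraproduct/remainder estimates) as given in Chapter 2 of \cite{BCD}, which is exactly the reference the paper cites in lieu of a proof: annulus localization plus reverse Bernstein for $\mathcal{T}$, ball localization plus H\"older in space and in the dyadic index for $\mathcal{R}$, with the reconstruction lemma for ball-supported series explaining both the condition $s_1+s_2>0$ and the degradation to $B^0_{p,\infty}$ at the endpoint. Nothing to add.
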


The consequence of this proposition is that the spaces $\B$ are Banach algebras as long as condition \eqref{eq:AnnLInfty} holds with $s > 0$.
Moreover, in that case, we have the so-called \emph{tame estimates}.

\begin{cor}\label{c:tame}
Let $(s, r)\in\R\times[1,+\infty]$ be such that that $s > 0$. Then, we have
\begin{equation*}
\forall\, f, g \in \B\,, \quad\qquad \| fg \|_{\B}\, \lesssim \,\| f \|_{L^\infty}\, \|g\|_{\B}\, +\, \| f \|_{\B} \,\| g \|_{L^\infty}\,.
\end{equation*}
\end{cor}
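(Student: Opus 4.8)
The plan is to reduce everything to Bony's decomposition of the product,
$$
fg\;=\;\mathcal{T}_f(g)\,+\,\mathcal{T}_g(f)\,+\,\mathcal{R}(f,g)\,,
$$
and to estimate each of the three pieces separately with Proposition \ref{p:op}, noting that the only structural input I need beyond that proposition is the embedding $L^\infty\hookrightarrow B^0_{\infty,\infty}$ (an immediate consequence of Definition \ref{d:B}). The guiding idea is that each paraproduct is controlled by an $L^\infty$ norm of its ``low-frequency'' argument times a $\B$ norm of the other, while the remainder is the one term forcing the hypothesis $s>0$.

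First I would handle the two paraproducts using the first bound of Proposition \ref{p:op} with $p=+\infty$. Applying it with $u=f$, $v=g$ gives $\|\mathcal{T}_f(g)\|_{\B}\leq C\,\|f\|_{L^\infty}\,\|\nabla g\|_{B^{s-1}_{\infty,r}}$, and since differentiation maps $\B=B^s_{\infty,r}$ continuously into $B^{s-1}_{\infty,r}$ we get $\|\nabla g\|_{B^{s-1}_{\infty,r}}\lesssim\|g\|_{\B}$, whence $\|\mathcal{T}_f(g)\|_{\B}\lesssim\|f\|_{L^\infty}\,\|g\|_{\B}$. Exchanging the roles of $f$ and $g$ yields symmetrically $\|\mathcal{T}_g(f)\|_{\B}\lesssim\|g\|_{L^\infty}\,\|f\|_{\B}$. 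Both contributions are thus of the desired tame form, and no restriction on $s$ enters here.

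The remaining term is the remainder, and this is where the condition $s>0$ is genuinely used. I would embed one factor, say $g$, into $B^0_{\infty,\infty}$ via $\|g\|_{B^0_{\infty,\infty}}\lesssim\|g\|_{L^\infty}$, and then invoke the continuity of $\mathcal{R}$ from Proposition \ref{p:op} with the choice $(s_1,p_1,r_1)=(s,\infty,r)$ for $f$ and $(s_2,p_2,r_2)=(0,\infty,\infty)$ for $g$. The admissibility conditions read $s_1+s_2=s>0$, $1/p=0\leq1$, and $1/r=1/r\leq1$, all satisfied, so $\mathcal{R}(f,g)\in B^{s}_{\infty,r}$ with
$$
\|\mathcal{R}(f,g)\|_{\B}\;\lesssim\;\|f\|_{B^{s}_{\infty,r}}\,\|g\|_{B^0_{\infty,\infty}}\;\lesssim\;\|f\|_{\B}\,\|g\|_{L^\infty}\,.
$$
Summing the three bounds gives precisely $\|fg\|_{\B}\lesssim\|f\|_{L^\infty}\|g\|_{\B}+\|f\|_{\B}\|g\|_{L^\infty}$, which is the claim. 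The only delicate point—hence the ``main obstacle''—is the remainder: the hypothesis $s>0$ is exactly what guarantees $s_1+s_2>0$, so that the endpoint $s_1+s_2=0$ case of Proposition \ref{p:op} (which would only land in $B^0_{\infty,\infty}$ and would require $r=1$) is avoided; this is also the reason the corollary fails for $s=0$.
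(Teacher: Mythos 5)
Your proof is correct and follows exactly the route the paper intends: the corollary is stated as an immediate consequence of Proposition \ref{p:op} via Bony's decomposition, with the paraproducts controlled by the $L^\infty\times B^s_{\infty,r}$ bounds and the remainder by its continuity on $B^s_{\infty,r}\times B^0_{\infty,\infty}$ (using $L^\infty\hookrightarrow B^0_{\infty,\infty}$), which is precisely where $s>0$ is needed. The index bookkeeping ($s_1+s_2=s>0$, $1/p=0$, $1/r=1/r$) is accurate, so there is nothing to add.
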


\begin{rmk} \label{r:tame}
The space $B^0_{\infty, 1}$ is \emph{not} an algebra. If $f, g \in B^0_{\infty, 1}$, one can use Proposition \ref{p:op} to bound the paraproducts $\mc T_f(g)$ and $\mc T_g(f)$, but not the
remainder $\mathcal{R}(f, g)$. 
\end{rmk}

\subsection{Transport equations and commutator estimates} \label{ss:transport}

In this section, we focus on transport equations in non-homogeneous Besov spaces. We refer to Chapter 3 of \cite{BCD} for a complete presentation of the subject.
We study the initial value problem
\begin{equation}\label{eq:TV}
\begin{cases}
\partial_t f + v \cdot \nabla f = g \\
f_{|t = 0} = f_0\,.
\end{cases}
\end{equation}
We will always assume the velocity field $v=v(t,x)$ to be a Lipschitz divergence-free function, \tsl{i.e.} $\D(v) = 0$.
It is therefore practical to formulate the following definition: the couple $(s, r) \in \mathbb{R} \times [1, +\infty]$ is said to satisfy the Lipschitz condition if 
condition \eqref{i_eq:Lip} holds. This implies the embedding $\B \hookrightarrow W^{1, \infty}$.

The main well-posedness result concerning problem \eqref{eq:TV} in Besov spaces is contained in the following statement, stated in the case
$p=+\infty$ (the only relevant one for our analysis).
We recall here that, when $X$ is Banach, the notation $C^0_w\big([0,T];X\big)$ refers to the space of functions which are continuous in time with values in $X$ endowed with its weak topology.
\begin{thm}\label{th:transport}
Let $(s, r) \in \mathbb{R} \times [1, +\infty]$ satisfy the Lipschitz condition \eqref{i_eq:Lip}. Given some $T>0$, let $g \in L^1_T(\B)$. Assume that $v \in L^1_T(\B)$ and that there exist real numbers
$q > 1$ and $M > 0$ for which $v \in L^q_T(B^{-M}_{\infty, \infty})$. Finally, let $f_0 \in \B$ be an initial datum. Then, the transport equation \eqref{eq:TV} has a unique solution $f$ in:
\begin{itemize}
\item the space $C^0\big([0,T];\B\big)$, if $r < +\infty$;
\item the space $\left( \bigcap_{s'<s} C^0\big([0,T];B^{s'}_{\infty, \infty}\big) \right) \cap C^0_{w}\big([0,T];B^s_{p, \infty}\big)$, if $r = +\infty$.
\end{itemize}
Moreover, this unique solution satisfies the following estimate:
\begin{equation*} 
\| f \|_{L^\infty_T(\B)} \leq \exp \left( C\!\! \int_0^T \| \nabla v \|_{B^{s-1}_{\infty, r}} \right)
\left\{ \| f_0 \|_{\B} + \int_0^T \exp \left( - C\!\! \int_0^t \| \nabla v \|_{B^{s-1}_{\infty, r}} \right) \| g(t) \|_{\B} {\rm d} t  \right\},
\end{equation*}
for some constant $C = C(d, s, r)>0$.
\end{thm}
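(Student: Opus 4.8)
The plan is to prove Theorem \ref{th:transport} by the classical Littlewood-Paley approach for transport equations, which reduces the problem to a collection of localized estimates coupled with a Gronwall argument. I would proceed in the following order.

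\textbf{Step 1: A priori estimate via frequency localization.} First I would apply the dyadic block $\Delta_j$ to the equation \eqref{eq:TV}, obtaining for each $j \geq -1$ the localized equation
\begin{equation*}
\partial_t \Delta_j f + v \cdot \nabla \Delta_j f = \Delta_j g + R_j, \qquad R_j := v \cdot \nabla \Delta_j f - \Delta_j(v \cdot \nabla f),
\end{equation*}
where $R_j$ is a commutator term. Since $\div(v) = 0$, a standard $L^p$ (here $L^\infty$) energy estimate on this transport equation gives
\begin{equation*}
\|\Delta_j f(t)\|_{L^\infty} \leq \|\Delta_j f_0\|_{L^\infty} + \int_0^t \Big( \|\Delta_j g\|_{L^\infty} + \|R_j\|_{L^\infty} \Big) \, {\rm d}\tau.
\end{equation*}

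\textbf{Step 2: Commutator estimate.} The heart of the matter is to control $R_j$. The key commutator inequality, proved via Bony's decomposition (Proposition \ref{p:op}) and the Bernstein inequalities (Lemma \ref{l:bern}), states that there is a sequence $(c_j)_{j \geq -1}$ with $\|(c_j)\|_{\ell^r} \leq 1$ such that
\begin{equation*}
2^{js} \|R_j\|_{L^\infty} \leq C \, c_j \, \|\nabla v\|_{B^{s-1}_{\infty, r}} \, \|f\|_{\B}.
\end{equation*}
This is exactly where the Lipschitz condition \eqref{i_eq:Lip} is used, to ensure $\nabla v \in L^\infty$ and that the paraproduct and remainder terms land in the correct space. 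Multiplying the Step 1 estimate by $2^{js}$, taking the $\ell^r$ norm, and inserting this bound yields, after an application of Minkowski's inequality,
\begin{equation*}
\|f(t)\|_{\B} \leq \|f_0\|_{\B} + \int_0^t \Big( \|g(\tau)\|_{\B} + C \|\nabla v(\tau)\|_{B^{s-1}_{\infty, r}} \|f(\tau)\|_{\B} \Big) \, {\rm d}\tau.
\end{equation*}
The Gronwall lemma then produces precisely the stated exponential estimate. \textbf{I expect this commutator bound to be the main obstacle}, since it requires careful bookkeeping of the paraproduct and remainder pieces, and is the only place where the endpoint nature of the indices (condition \eqref{i_eq:Lip}) genuinely matters.

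\textbf{Step 3: Existence, uniqueness, and time-continuity.} With the a priori estimate in hand, existence follows by a standard approximation scheme: I would regularize the data and velocity field (e.g. by mollification or frequency truncation $S_n v$, $S_n f_0$, $S_n g$), solve the resulting smooth problems, and use the uniform bound from Step 2 together with a compactness or Cauchy-sequence argument to pass to the limit. The auxiliary hypothesis $v \in L^q_T(B^{-M}_{\infty,\infty})$ enters here to guarantee weak time-continuity and to make sense of the limiting object, particularly in the endpoint case $r = +\infty$. Uniqueness follows by applying the same a priori estimate to the difference of two solutions (with $g = 0$ and $f_0 = 0$) at a regularity level one notch below $s$, where $\B$ may fail to be an algebra but the transport estimate still closes. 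Finally, the time-continuity statements --- strong continuity $C^0([0,T]; \B)$ when $r < +\infty$, and only weak continuity $C^0_w([0,T]; B^s_{\infty,\infty})$ when $r = +\infty$ --- are obtained by the usual argument: the solution lies in $L^\infty_T(\B)$, and continuity in the strong topology for $r < +\infty$ comes from the density of Schwartz functions and the dominated convergence of the $\ell^r$ tail, which fails precisely at $r = +\infty$, forcing the weaker conclusion there.
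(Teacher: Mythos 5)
Your proposal is correct and follows exactly the classical procedure the paper itself invokes for this statement (which it recalls from Chapter 3 of \cite{BCD} rather than reproving): localize with $\Delta_j$, control the commutator $\big[v\cdot\nabla,\Delta_j\big]f$ via Bony decomposition as in Lemma \ref{l:CommBCD}, sum in $\ell^r$ and apply Gr\"onwall, then conclude existence, uniqueness and time-continuity by approximation. The only cosmetic difference is that you state the commutator bound in the already-absorbed form $c_j\,\|\nabla v\|_{B^{s-1}_{\infty,r}}\|f\|_{\B}$, which under the Lipschitz condition \eqref{i_eq:Lip} follows from the two-term estimate of Lemma \ref{l:CommBCD} by the embeddings $B^{s-1}_{\infty,r}\hookrightarrow L^\infty$ and $\B\hookrightarrow W^{1,\infty}$.
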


The proof of the previous statement is based on a classical procedure, which consists in finding a transport equation the dyadic blocks $\Delta_jf$ solve. Several commutator estimates
are then needed. The first one is contained in the next statement (see Lemma 2.100 and Remark 2.101 in \cite{BCD}).
Notice that estimate \eqref{eq:lCommBLinfty} below is not contained in \cite{BCD}, but it easily follows by slight
modifications to the arguments of the proof (see in particular the control of the term $R^3_j$ at pages 113-114 of \cite{BCD}).


\begin{lemma}\label{l:CommBCD}
Assume that $v \in \B$ with $(s, r)$ satisfying the Lipschitz condition  \eqref{i_eq:Lip}.
Denote by $\big[ v \cdot \nabla, \Delta_j \big] f\,=\,(v \cdot \nabla) \Delta_j - \Delta_j (v \cdot \nabla)$ the commutator between the transport operator $v\cdot\nabla$ and the frequency
localisation operator $\Delta_j$. 
Then we have
\begin{equation*}
\forall\, f \in \B\,, \qquad\qquad  2^{js}\left\| \big[ v \cdot \nabla, \Delta_j \big] f  \right\|_{L^\infty} \lesssim c_j \Big( \|\nabla v \|_{L^\infty} \| f \|_{\B} +
\|\nabla v \|_{B^{s-1}_{\infty, r}} \|\nabla f \|_{L^\infty} \Big)\,,
\end{equation*}
and also
\begin{equation}\label{eq:lCommBLinfty}
\forall\, f \in B^{s-1}_{\infty, r}\,, \qquad\qquad
2^{j(s-1)} \left\| \big[ v \cdot \nabla, \Delta_j \big] f  \right\|_{L^\infty} \lesssim c_j \Big( \|\nabla v \|_{L^\infty} \| f \|_{B^{s-1}_{\infty, r}} +
\|\nabla v \|_{B^{s-1}_{\infty, r}} \| f \|_{L^\infty} \Big)\,,
\end{equation}
where the $\big(c_j\big)_{j\geq -1}$ are (possibly distinct) sequences in the unit ball of $\ell^r$. 

\end{lemma}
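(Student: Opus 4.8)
The commutator estimates are proved via Bony's paraproduct decomposition, following the strategy of Lemma 2.100 in \cite{BCD}. The plan is to decompose the product $v \cdot \nabla f$ using paraproducts and the remainder, localise each piece in frequency by applying $\Delta_j$, and then estimate the resulting commutators block by block, exploiting the spectral localisation properties of $\Delta_j$ together with the Bernstein inequalities from Lemma \ref{l:bern}.

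\medskip

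First I would write $v^k \d_k f = \mc T_{v^k}(\d_k f) + \mc T_{\d_k f}(v^k) + \mc R(v^k, \d_k f)$ (summation over $k$), using the divergence-free condition $\div(v)=0$ to move the derivative freely in and out of the product when convenient, so that the transport term becomes $\div(v\,\cdot\,)$ acting on $f$. Applying $\Delta_j$ and subtracting $\Delta_j(v \cdot \nabla) f$, the commutator splits into a sum of three pieces coming from the three terms of Bony's decomposition. The genuinely delicate piece is the paraproduct-type term $\sum_{|j'-j|\le N} \big[ S_{j'-1}v^k, \Delta_j\big] \d_k \Delta_{j'} f$, which one rewrites as a convolution against the kernel $h = \mc F^{-1}\vphi$ using the integral representation of $\Delta_j$; a first-order Taylor expansion of $f$ inside the convolution produces the factor $\|\nabla v\|_{L^\infty}$ (or $\|\nabla f\|_{L^\infty}$, in the dual form) paid against a gain of one dyadic order, which is exactly what yields the product structure on the right-hand side. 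The remaining paraproduct and remainder contributions are estimated directly by Proposition \ref{p:op} and the Bernstein inequalities, with the high-frequency remainder term being where the Lipschitz condition \eqref{i_eq:Lip} (equivalently the embedding $\B\hookrightarrow W^{1,\infty}$) is used to close the sums in $\ell^r$. Collecting the estimates, each piece contributes a term bounded by $c_j\big(\|\nabla v\|_{L^\infty}\|f\|_{\B} + \|\nabla v\|_{B^{s-1}_{\infty,r}}\|\nabla f\|_{L^\infty}\big)$ with $(c_j)\in \ell^r$ of unit norm, giving the first inequality.

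\medskip

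For the second estimate \eqref{eq:lCommBLinfty}, which is the one not stated in \cite{BCD}, the plan is to repeat the same decomposition but to track the regularity bookkeeping one derivative lower, i.e. at the level $B^{s-1}_{\infty,r}$ for $f$ rather than $B^s_{\infty,r}$. Concretely, in the offending term labelled $R^3_j$ in \cite{BCD} (pages 113--114), instead of distributing derivatives so as to land on $\|\nabla f\|_{L^\infty}$, one balances them so that the low-regularity norm $\|f\|_{L^\infty}$ is paired with $\|\nabla v\|_{B^{s-1}_{\infty,r}}$, while the term $\|\nabla v\|_{L^\infty}\|f\|_{B^{s-1}_{\infty,r}}$ absorbs the commutator where the velocity carries the $L^\infty$ norm. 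The main obstacle, as usual for these endpoint commutator bounds, is the summability of the dyadic series in the critical regime $s=r=1$: here the spaces are not algebras (cf. Remark \ref{r:tame}) and the naive bounds would only give $\ell^\infty$ sequences, so I would need the sharper counting of overlapping frequency supports together with the condition \eqref{i_eq:Lip} to recover genuine $\ell^r$ summability with unit-ball coefficients $c_j$. This is the step requiring care, whereas the algebraic manipulations and the individual block estimates are routine once the decomposition is set up.
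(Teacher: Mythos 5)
Your plan follows exactly the route the paper takes: the paper simply invokes Lemma 2.100 and Remark 2.101 of \cite{BCD} for the first estimate and notes that the second follows by modifying the control of the term $R^3_j$ on pages 113--114 of that reference, which is precisely the Bony-decomposition argument you outline (including the rebalancing of derivatives in $R^3_j$ so that $\|f\|_{L^\infty}$ is paired with $\|\nabla v\|_{B^{s-1}_{\infty,r}}$). No gap.
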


The second commutator result deals with commutators between paraproduct operators and Fourier multipliers.

\begin{lemma}\label{l:ParaComm}
Let $\k$ be a smooth function on $\mathbb{R}^d$, which is homogeneous of degree $m$ away from a neighborhood of $0$. Then, for a vector field $v$ such that $\nabla v \in L^\infty$, one has:
\begin{equation*}
\forall\, f \in \B\,, \qquad \left\| \big[ \mathcal{T}_v, \k(D) \big] f \right\|_{B^{s-m+1}_{\infty, r}}\, \lesssim\, \|\nabla v\|_{L^\infty} \|f\|_{\B}\,.
\end{equation*}
\end{lemma}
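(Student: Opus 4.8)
The plan is to reduce the estimate to a pointwise, kernel-based commutator bound after decomposing along the paraproduct. Since $\Delta_j$ and $\k(D)$ are both Fourier multipliers they commute, so starting from $\mc T_v(g) = \sum_j S_{j-1}v\,\Delta_j g$ and using $\Delta_j \k(D) = \k(D)\Delta_j$, I would first record the algebraic identity
\[
\big[\mc T_v,\k(D)\big]f\;=\;\sum_j\big[S_{j-1}v,\k(D)\big]\Delta_jf\,,
\]
where $[S_{j-1}v,\k(D)]g = S_{j-1}v\,\k(D)g - \k(D)(S_{j-1}v\,g)$ is the commutator of $\k(D)$ with multiplication by the smooth low-frequency function $S_{j-1}v$. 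The point of this splitting is that each summand $T_j := [S_{j-1}v,\k(D)]\Delta_jf$ is spectrally localized in a dyadic annulus $2^j\mathcal C$: indeed both $S_{j-1}v\,\k(D)\Delta_jf$ and $\k(D)(S_{j-1}v\,\Delta_jf)$ are images under a Fourier multiplier of the paraproduct block $S_{j-1}v\,\Delta_jf$, whose spectrum lies in such an annulus by the usual support considerations.

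Next I would remove the singular/non-homogeneous behaviour of $\k$ near the origin by localizing the symbol. Fix a smooth $\psi$ supported in an annulus and equal to $1$ on a neighborhood of $\mathcal C_0\cup\mathcal C$, where $\mathcal C_0$ is the annulus containing the spectrum of $\Delta_jf$. Then $\k(D)\Delta_jf = \k_j(D)\Delta_jf$ and $\k(D)(S_{j-1}v\,\Delta_jf) = \k_j(D)(S_{j-1}v\,\Delta_jf)$, with $\k_j(\xi):=\k(\xi)\,\psi(2^{-j}\xi)$. For $j\geq j_0$ large enough that the relevant frequencies lie in the homogeneity region of $\k$, the degree-$m$ homogeneity gives the clean rescaling $\k_j(\xi)=2^{jm}\,\theta(2^{-j}\xi)$ with $\theta:=\k\,\psi$ a fixed Schwartz symbol independent of $j$. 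Consequently $T_j = [S_{j-1}v,\k_j(D)]\Delta_jf$ exactly, and its convolution kernel $h_j=\mc F^{-1}\k_j$ satisfies $h_j(x)=2^{j(m+d)}h(2^jx)$ with $h=\mc F^{-1}\theta$.

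The heart of the matter is then a pointwise commutator bound. Writing $a = S_{j-1}v$ and $g=\Delta_jf$, one has the exact integral identity
\[
[a,\k_j(D)]g(x)=\int_{\R^d}h_j(x-y)\big(a(x)-a(y)\big)\,g(y)\,{\rm d}y\,,
\]
so the mean value inequality $|a(x)-a(y)|\leq \|\nabla a\|_{L^\infty}\,|x-y|$ yields $\|T_j\|_{L^\infty}\leq \|\nabla a\|_{L^\infty}\,\big\||z|\,h_j\big\|_{L^1}\,\|g\|_{L^\infty}$. A change of variables gives $\||z|\,h_j\|_{L^1}=2^{j(m-1)}\||z|\,h\|_{L^1}$, the last factor being finite since $\theta$ is Schwartz, while $\|\nabla S_{j-1}v\|_{L^\infty}\lesssim\|\nabla v\|_{L^\infty}$ (uniform $L^\infty$-boundedness of $S_{j-1}$) and $\|\Delta_jf\|_{L^\infty}=c_j\,2^{-js}\,\|f\|_{\B}$ with $(c_j)_j$ in the unit ball of $\ell^r$. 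Hence $\|T_j\|_{L^\infty}\lesssim c_j\,2^{j(m-1-s)}\,\|\nabla v\|_{L^\infty}\,\|f\|_{\B}$. Finally, since the $T_j$ are annular, the standard reconstruction of Besov norms from dyadically localized pieces gives
\[
\big\|[\mc T_v,\k(D)]f\big\|_{B^{s-m+1}_{\infty,r}}\lesssim \Big\|\big(2^{j(s-m+1)}\|T_j\|_{L^\infty}\big)_j\Big\|_{\ell^r}\lesssim \|\nabla v\|_{L^\infty}\,\|f\|_{\B}\,,
\]
using $2^{j(s-m+1)}\,2^{j(m-1-s)}=1$.

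I expect the main obstacle to be the two technical wrinkles hidden in this scheme: verifying the annular spectral localization of the paraproduct blocks (which requires the usual gap in the Littlewood--Paley partition) and correctly isolating the homogeneous region of $\k$. The finitely many low-frequency indices $j<j_0$, where $\k$ need not be homogeneous, must be handled separately; there the symbol $\k\,\psi(2^{-j}\cdot)$ is still a smooth compactly supported multiplier with integrable-first-moment kernel, so the same pointwise bound applies with $j$-uniform constants over this finite range, contributing only a fixed multiple of $\|\nabla v\|_{L^\infty}\,\|f\|_{\B}$.
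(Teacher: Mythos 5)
Your argument is correct and is exactly the standard proof of this statement (the paper gives no proof of Lemma \ref{l:ParaComm}, deferring to the classical result in Bahouri--Chemin--Danchin, Lemma 2.99, whose proof follows precisely your scheme: reduce to $\sum_j[S_{j-1}v,\k(D)]\Delta_jf$, use the kernel representation of the commutator together with the first-order Taylor bound $|a(x)-a(y)|\leq\|\nabla a\|_{L^\infty}|x-y|$, exploit homogeneity to rescale the localized symbol, and resum the spectrally localized blocks). The two technical wrinkles you flag --- the annular localization of the paraproduct blocks and the finitely many non-homogeneous low frequencies --- are handled exactly as you describe.
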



To conclude this part, let us present a refinement of Theorem \ref{th:transport} above, discovered by Vishik \cite{Vis} and, with a different proof, by Hmidi and Keraani \cite{HK}.
It states that, if $\D(v) = 0$ and the Besov regularity index is $s = 0$,
then the estimate in Theorem \ref{th:transport} can be replaced by an inequality which is linear with respect to $\|\nabla v\|_{L^1_T(L^\infty)}$.

\begin{thm}\label{th:AnnInnLinTV}
Assume that $\nabla v \in L^1_T(L^\infty)$ and that $v$ is divergence-free. Let $r \in [1, +\infty]$.
Then there exists a constant $C = C(d)$ such that, for any solution $f$ to problem \eqref{eq:TV} in $C^0\big([0,T];B^0_{\infty,r}\big)$, with the usual modification of $C^0$ into $C^0_w$
if $r=+\infty$, we have
\[ 
\| f \|_{L^\infty_T(B^0_{\infty, r})}\, \leq\, C\, \bigg\{ \| f_0 \|_{B^0_{\infty, r}}\, +\, \| g \|_{L^1_T(B^0_{\infty, r})} \bigg\}\;\left( 1+\int_0^T\| \nabla v(\tau) \|_{L^\infty}{\rm d} \tau \right)\,.
\] 
\end{thm}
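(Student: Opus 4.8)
The plan is to prove the $s=0$ linear transport estimate (Theorem \ref{th:AnnInnLinTV}) by the Vishik--Hmidi--Keraani strategy, exploiting the fact that the divergence-free condition removes the factor that would otherwise force exponential growth in the Gr\"onwall argument. The starting point is the localized equation: applying $\Delta_j$ to \eqref{eq:TV} gives
\begin{equation*}
\partial_t \Delta_j f + v \cdot \nabla \Delta_j f = \Delta_j g + \big[ v \cdot \nabla, \Delta_j \big] f\,.
\end{equation*}
Because $\div(v)=0$, the transport operator $\partial_t + v\cdot\nabla$ preserves $L^\infty$ norms exactly (transport along the measure-preserving flow of $v$), so integrating this ODE along characteristics and taking $L^\infty$ norms yields, for each $j$,
\begin{equation*}
\big\| \Delta_j f(t) \big\|_{L^\infty} \leq \big\| \Delta_j f_0 \big\|_{L^\infty} + \int_0^t \big\| \Delta_j g \big\|_{L^\infty}\, {\rm d}\tau + \int_0^t \big\| \big[ v \cdot \nabla, \Delta_j \big] f \big\|_{L^\infty}\, {\rm d}\tau\,.
\end{equation*}

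The heart of the matter is to control the accumulated commutator term in a way that is \emph{linear} rather than exponential in $\|\nabla v\|_{L^1_T(L^\infty)}$. At regularity $s=0$ one cannot simply invoke Lemma \ref{l:CommBCD}, since that estimate, summed over $j$, reproduces the $\|\nabla v\|_{B^{s-1}_{\infty,r}}\|f\|_{\B}$ coupling which reintroduces an $f$-dependence on the right-hand side and forces Gr\"onwall to produce exponential growth. The refinement of Vishik and Hmidi--Keraani instead splits the commutator using the paraproduct decomposition $v\cdot\nabla f = \mathcal{T}_{v^k}\partial_k f + \mathcal{T}_{\partial_k f}v^k + \mathcal{R}(v^k,\partial_k f)$ and estimates $\big[\mathcal{T}_{v^k},\Delta_j\big]\partial_k f$, $\Delta_j\big(\mathcal{T}_{\partial_k f}v^k\big)$ and the remainder piece separately. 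The key structural gain is that, after multiplying by $2^{0\cdot j}=1$ and summing in $\ell^r$, each of these pieces can be bounded by $C\,\|\nabla v\|_{L^\infty}\,\|f\|_{B^0_{\infty,r}}$ \emph{up to a logarithmically-spread, frequency-localized error} whose accumulation over the dyadic range produces the benign factor $\big(1+\int_0^T\|\nabla v\|_{L^\infty}\big)$ rather than an exponential. Concretely, one tracks the solution through the low-frequency ($j<0$), intermediate, and high-frequency regimes and uses that $\Delta_j$ applied to a function transported by $v$ only ``leaks'' into neighboring frequency bands at a rate controlled by $\|\nabla v\|_{L^\infty}$.

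I would organize the argument as follows. First, fix the $\ell^r$ sequence $c_j$ and define $f_j(t):=\|\Delta_j f(t)\|_{L^\infty}$; the inequality above is an integral inequality for each $f_j$. Second, apply the commutator decomposition to write the offending term as a sum of pieces, the dominant one being of the form $c_j(t)\,\|\nabla v(t)\|_{L^\infty}\,\|f(t)\|_{B^0_{\infty,r}}$ plus a remainder that shifts mass between adjacent scales. Third, introduce the auxiliary quantity that absorbs the scale-leakage, and set up a Gr\"onwall-type inequality for $\|f\|_{L^\infty_t(B^0_{\infty,r})}$ in which the coefficient multiplying the unknown is exactly $\|\nabla v\|_{L^\infty}$; solving it gives the bound with the linear-in-$\int\|\nabla v\|_{L^\infty}$ prefactor. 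The main obstacle, and the genuinely delicate point, is precisely the estimate of the commutator and remainder at the critical index $s=0$: since $B^0_{\infty,1}$ is not an algebra (Remark \ref{r:tame}) and the naive commutator bound loses the logarithm, one must carefully exploit the frequency-cascade structure — counting how many dyadic blocks interact and showing their total contribution grows only linearly, not exponentially, in the velocity's Lipschitz norm. This is the computation for which Vishik's and Hmidi--Keraani's original proofs were designed, and I would follow their frequency-localized bookkeeping rather than attempting a direct Gr\"onwall estimate on the full Besov norm.
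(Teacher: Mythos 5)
First, a point of reference: the paper does not prove Theorem \ref{th:AnnInnLinTV} at all; it is recalled from Vishik \cite{Vis} and Hmidi--Keraani \cite{HK} and used as a black box. So the comparison can only be against the known proofs in those references.

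Your proposal contains a genuine gap at its final and decisive step. You propose to bound the commutator so that the resulting integral inequality has the form
\begin{equation*}
\| f(t) \|_{B^0_{\infty,r}}\,\leq\,\| f_0 \|_{B^0_{\infty,r}}\,+\,\| g \|_{L^1_t(B^0_{\infty,r})}\,+\,C\int_0^t \|\nabla v(\tau)\|_{L^\infty}\,\| f(\tau) \|_{B^0_{\infty,r}}\,{\rm d}\tau\,,
\end{equation*}
and then to ``solve it by Gr\"onwall to get the linear-in-$\int\|\nabla v\|_{L^\infty}$ prefactor.'' But Gr\"onwall applied to exactly this inequality yields $\exp\big(C\int_0^t\|\nabla v\|_{L^\infty}\big)$, i.e.\ the classical exponential bound of Theorem \ref{th:transport} at $s=0$ --- it can never produce the factor $1+\int_0^T\|\nabla v\|_{L^\infty}$. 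The linear bound is fundamentally \emph{not} a Gr\"onwall bound, and no amount of ``frequency-cascade bookkeeping'' feeding into a Gr\"onwall inequality with coefficient $\|\nabla v\|_{L^\infty}$ can recover it. The phrases about a ``logarithmically-spread, frequency-localized error whose accumulation produces the benign factor'' describe the desired outcome, not a mechanism that achieves it.

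The actual mechanism (in the Hmidi--Keraani proof) is an optimization over an auxiliary regularity parameter. One decomposes the data and the force into dyadic pieces, $f_0=\sum_{j'}\Delta_{j'}f_0$, $g=\sum_{j'}\Delta_{j'}g$, and solves the transport equation separately for each piece $f_{j'}$. Each $f_{j'}$ starts spectrally localized, so the classical estimates in $B^{\pm\veps}_{\infty,\infty}$ (with growth rate $e^{C\veps V(t)}$, $V(t)=\int_0^t\|\nabla v\|_{L^\infty}$, the rate being \emph{proportional to} $\veps$) give $\|\Delta_j f_{j'}(t)\|_{L^\infty}\lesssim 2^{-\veps|j-j'|}e^{C\veps V(t)}\|\Delta_{j'}f_0\|_{L^\infty}$ plus the corresponding force contribution. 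Summing over $j'$ by Young's inequality in $\ell^r$ costs a factor $\sum_k 2^{-\veps|k|}\sim 1/\veps$, so one obtains $\|f\|_{L^\infty_T(B^0_{\infty,r})}\lesssim \veps^{-1}e^{C\veps V(T)}\big(\|f_0\|_{B^0_{\infty,r}}+\|g\|_{L^1_T(B^0_{\infty,r})}\big)$. Choosing $\veps\sim (1+V(T))^{-1}$ balances the two factors and yields exactly $C\big(1+V(T)\big)$. This trade-off between the off-diagonal summation cost $1/\veps$ and the $\veps$-dependent exponential rate is the idea your sketch is missing; without it the argument collapses back to the exponential estimate.
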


\subsection{An integrability lemma} \label{ss:int}

This section is devoted to the proof of Lemma \ref{l:Integrability} below, which plays a crucial role in establishing the equivalence between the original formulation and the Els\"asser
formulation of the quasi-homogeneous ideal MHD system.

This is a standard property, which however is usually stated in a $L^1_{\rm loc}$ setting (this is enough when dealing, for instance, with Navier-Stokes type systems),
see \tsl{e.g.} Lemma 1.1 in Chapter III of \cite{Galdi}. In default of a precise reference, we provide a full proof for reader's convenience.

\begin{lemma}\label{l:Integrability}
Let $T \in \mathcal{S}'(\mathbb{R}^d ; \mathbb{R}^d)$ be a tempered distribution which satisfy 
\begin{equation} \label{hyp:T}
\lan T\,,\,\vphi\ran_{\mc D'\times\mc D}\,=\,0\qquad\qquad\forall\,\vphi\,\in\,\mc D(\Omega)\quad\mbox{ such that }\quad \div(\vphi)=0\,.
\end{equation}

Then there exists another tempered distribution $S \in \mathcal{S}'(\mathbb{R}^d ; \mathbb{R})$ such that $T = \nabla S$.
\end{lemma}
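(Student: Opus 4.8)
The plan is to show that a vector-valued tempered distribution $T$ which annihilates all divergence-free test functions must be a gradient, by passing to the Fourier side. The key observation is that condition \eqref{hyp:T} is a statement about orthogonality to the kernel of the divergence, and on the Fourier side the divergence-free constraint $\div(\vphi)=0$ becomes the pointwise condition $\xi\cdot\what\vphi(\xi)=0$. Heuristically, this forces $\what T(\xi)$ to be parallel to $\xi$ for each $\xi$, which is exactly the Fourier-side characterization of a gradient: if $\what T(\xi)=\xi\,g(\xi)$ for some scalar $g$, then $T=\nabla S$ with $\what S=-i\,g$. So the content of the lemma is really a rigorous version of this pointwise algebraic fact, upgraded to the level of distributions.

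First I would reformulate the hypothesis in a more usable form. Using the Leray projector $\P$ recalled in \eqref{eq:P-op}, any divergence-free field can be written as $\P\psi$, so \eqref{hyp:T} says that $\lan T, \P\psi\ran=0$ for all suitable test functions $\psi$, i.e. $\P T = 0$ in the distributional sense (on the appropriate class of test functions where $\P$ makes sense). Since $\P = \Id + \nabla(-\Delta)^{-1}\div$, this reads $T = -\nabla(-\Delta)^{-1}\div T$, which already exhibits $T$ as a gradient with candidate potential $S := -(-\Delta)^{-1}\div T$. The real work is to make this manipulation legitimate at the level of tempered distributions, where $(-\Delta)^{-1}$ is only defined up to the singularity of $|\xi|^{-2}$ at the origin. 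I would therefore argue as follows: fix a test function $\vphi\in\mc D(\R^d;\R^d)$ and decompose it as $\vphi=\P\vphi+(\Id-\P)\vphi$. The divergence-free part $\P\vphi$ is annihilated by $T$ by hypothesis (after checking $\P\vphi$ can be approximated by, or genuinely lies in, the class of divergence-free test fields — a point needing a small density/smoothing argument since $\P\vphi$ need not be compactly supported). The gradient part $(\Id-\P)\vphi=-\nabla(-\Delta)^{-1}\div\vphi$ is a gradient of a Schwartz-type function, so pairing $T$ against it defines the action of $S$ on $\div\vphi$.

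Concretely, I would define $S$ by its action on test functions through $\lan S,\theta\ran := -\lan T, \nabla(-\Delta)^{-1}\theta\ran$ whenever $\theta=\div\vphi$ lies in the range of the divergence; one then checks this is well-defined (independent of the representative $\vphi$, using \eqref{hyp:T} precisely to absorb the ambiguity), linear, and continuous, so that it extends to a tempered distribution $S$, and verifies $\nabla S=T$ by construction. The main obstacle — and the reason a careful proof is needed rather than the one-line Fourier computation — is the behavior at the \emph{zero frequency}: the multiplier $|\xi|^{-2}$ is not locally integrable near $\xi=0$ in high dimensions handled naively, and $(-\Delta)^{-1}$ is not a bounded operator on $\mc S'$. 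The technical heart of the argument is thus to show that $\nabla(-\Delta)^{-1}\div\vphi$ is a legitimate test object (it is, because the symbol $\xi_j\xi_k/|\xi|^2$ is bounded and smooth away from the origin, and the factor $\div\vphi$ vanishes appropriately — or one works with the homogeneous components $\Delta_j$ for $j\geq0$ and treats the low-frequency piece $\Delta_{-1}T$ separately, reconstructing its potential by hand since on a neighborhood of the origin $\what T$ is smooth and parallel to $\xi$, hence divisible by $\xi$). I expect this low-frequency/origin analysis to be the delicate step; everything else is bookkeeping with the identity $\P=\Id+\nabla(-\Delta)^{-1}\div$ and the defining relation \eqref{hyp:T}.
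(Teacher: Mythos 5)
Your overall strategy (pass to the Fourier side, use the Helmholtz decomposition, isolate the zero frequency) is the same as the paper's, and you correctly locate the difficulty at $\xi=0$; but the step you defer as ``delicate'' is where the entire content of the lemma lies, and your proposed resolution of it does not work. First, for $\vphi\in\mc D(\R^d;\R^d)$ the fields $\P\vphi$ and $\nabla(-\Delta)^{-1}\div\vphi$ are in general \emph{not} Schwartz functions: their Fourier transforms carry a homogeneous-of-degree-zero discontinuity at $\xi=0$ (the symbol $\xi_j\xi_k/|\xi|^2$ has no limit there), so these fields decay only like $|x|^{-d}$ at infinity. Consequently the pairings $\lan T,\P\vphi\ran$ and $\lan T,\nabla(-\Delta)^{-1}\div\vphi\ran$ are simply undefined for a general $T\in\mc S'$; this is not a matter of approximating non-compactly-supported divergence-free fields by compactly supported ones. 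The identity $\P=\Id+\nabla(-\Delta)^{-1}\div$ therefore cannot be applied before the low-frequency obstruction is removed, so your candidate definition of $S$ is circular at exactly the point that needs proof.

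Second, your fallback for the low-frequency piece --- that near the origin $\what T$ is ``smooth and parallel to $\xi$, hence divisible by $\xi$'' --- is false: $\chi(\xi)\,\what T(\xi)$ is a compactly supported distribution of some finite order $p$, not a smooth function (the fact that $\Delta_{-1}T$ is a smooth function of $x$ says nothing about the regularity of its Fourier transform in the variable $\xi$), and dividing a distribution by $\xi$ at a zero of the divisor is precisely the hard step. The paper resolves this by flattening the \emph{test function} rather than the distribution: it subtracts from $\what\phi$ its Taylor polynomial at $\xi=0$ up to an order $N$ exceeding $p$, so that $\xi\,|\xi|^{-2}\,\chi(\xi)\big(\what\phi(\xi)-\sum_{|\al|\leq N}\xi^\al\,\d^\al\what\phi(0)/\al!\big)$ is of class $C^p$ and can legitimately be paired with $\what T$; the discarded polynomial part produces, after pairing with $\what T$, a genuine polynomial $Q$, which is then shown to be a gradient by first deriving $\curl T=0$ from hypothesis \eqref{hyp:T} and transferring that relation to $Q$. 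Without this Taylor-subtraction device (or an equivalent one) your argument does not close; with it, your high-frequency treatment via the Helmholtz decomposition is fine and coincides with the paper's.
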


\begin{proof}
To begin with, we notice that, from our assumption, it is easy to deduce (see \tsl{e.g.} the remarks preceding Proposition 1.2 of \cite{CDGG})
that\footnote{From now on, we agree that $\nabla f$ is the transpose matrix of the differential $Df$ of $f$: in other words, $[\nabla f]_{ij} = \partial_i f_j$. Of course, all derivatives here are to be understood in the weak sense.}
\begin{equation} \label{def:vort}
\curl T\,:=\,DT\,-\,\nabla T\,=\,0\qquad\qquad \mbox{ in the sense of } \; \mc S'\,.
\end{equation}
Unless otherwise mentionned, all brackets $\langle \, \cdot \, , \, \cdot \, \rangle$ should be understood in the sense of the $\mc S' \times \mc S$ duality. We work in Fourier variables. 

We must show the existence of a tempered distribution $\what S$ such that $\what{T_j} (\xi) = i \xi_j \what S(\xi)$.
Thanks to the condition $\curl(T) = 0$, this is no problem for frequencies away from $\xi = 0$. Thus, let us fix a smooth $\chi\in C^{\infty}_0(\R^d)$ such that
$\chi(\xi)\equiv1$ in a neighborhood of the origin.

Now, let $\phi\in\mc S(\R^d;\R^d)$ be a Schwartz function, and decompose its Fourier transform into
$$
\what\phi(\xi)\,=\,\chi(\xi)\,\what\phi(\xi)\,+\,\big(1-\chi(\xi)\big)\,\what\phi(\xi)\,.
$$

For the high-frequency part $\big(1-\chi(\xi)\big)\,\what\phi(\xi)$, we can apply the classical Leray-Helmholtz decomposition to write, in Fourier variables, the following identity:
$$
\big(1-\chi(\xi)\big)\what\phi(\xi)\,=\,\xi\big(1-\chi(\xi)\big)\,\frac{\xi\cdot\what\phi(\xi)}{|\xi|^2}\,+\,\psi(\xi)\,,\quad\mbox{ with }\;
\psi(\xi)\,:=\,
\big(1-\chi(\xi)\big)\left(\what\phi(\xi)-\xi\,\frac{\xi\cdot\what\phi(\xi)}{|\xi|^2}\right).
$$
We remark that both terms in the right-hand side of the previous relation belong to $\mc S$. Therefore, after noticing that $\xi\cdot\psi(\xi)=0$ and using hypothesis \eqref{hyp:T} on $T$, we can compute
\begin{align*}
\left\lan T\,,\,\big(1-\chi(D)\big)\,\phi \right\ran\,&=\,\left\lan\what T(\xi)\,,\,\xi\,\big(1-\chi(\xi)\big)\,\frac{\xi\cdot\what\phi(\xi)}{|\xi|^2} \right\ran\;=\;
\left\lan i\,\xi\,\left(\big(1-\chi(\xi)\big)\,\frac{-i\,\xi\cdot\what T(\xi)}{|\xi|^2}\right)\,,\,\what\phi(\xi) \right\ran \\
&=\,\left\lan i\,\xi\,\what S_1(\xi)\,,\,\what\phi(\xi) \right\ran\;=\;\left\lan \nabla S_1\,,\,\phi \right\ran\,.
\end{align*}

Let us now deal with the low-frequency part $\chi(\xi)\,\what\phi(\xi)$.
The obvious problem is that  the term $\xi\cdot \what{T}(\xi)\,|\xi|^{-2}$ may not be defined as a distribution near $\xi = 0$.
To circumvent this obstacle, we ``flatten'' the test function around zero by subtracting a Taylor polynomial: let us introduce, for some integer $N \geq 0$ which we will fix later on,
the $\xi$-function
\begin{equation*}
\mf R[\phi](\xi)\,:=\,
\chi(\xi)\,\left(\what{\phi}(\xi)\,-\,\sum_{|\alpha| \leq N} \frac{\xi^\alpha}{\alpha !} \partial^\alpha \what{\phi}(0)\right)\,.
\end{equation*}
The sum ranges on all multi-indices $\alpha \in \mathbb{N}^d$ such that $|\alpha| = \alpha_1 + \ldots + \alpha_d \leq N$.
Then, for $N$ large enough, $\Phi[\phi](\xi) := \xi\,|\xi|^{-2}\,\mf R[\phi](\xi)$ is of class $C^p$, where $p$ is the order of $\what{T}$ on the compact set $\Supp(\chi)$, so that the bracket
$\langle \what{T}, \Phi \rangle$ is well-defined.

Let us focus for a while on the action of $\what T$ on the polynomial part of $\what\phi$: we have
\begin{align}
\left\langle \what{T} (\xi)\, ,\, \chi(\xi) \sum_{|\alpha| \leq N} \frac{\xi^\alpha}{\alpha !} \partial^\alpha \what{\phi}(0) \right\rangle\,&=\,
\left\langle \sum_{|\alpha| \leq N}  \frac{1}{\alpha !} \left\langle \what{T}(\xi)\, ,\, \xi^\alpha\, \chi(\xi)\,\right\rangle\; \partial^\alpha \delta_0(\xi)\,,\,\what{\phi}(\xi) \right\rangle
\label{eq:IntegrabilityProofEQ1} \\
& := \left\langle \sum_{|\alpha| \leq N} \gamma_\alpha  \, \partial^\alpha \delta_0(\xi)\, , \,\what{\phi}(\xi) \right\rangle\,, \nonumber
\end{align}
where $\de_0$ is the Dirac mass centred at the origin.
As a linear combination of $\delta_0$ and its derivatives is the Fourier transform of a polynomial, we can fix $Q \in \mathbb{R}[x]$ such that the brackets
\eqref{eq:IntegrabilityProofEQ1} are equal to $\langle Q \,,\, \phi \rangle $. Hence, we have shown that
\begin{equation*} 
\langle T\,,\, \chi(D)\,\phi \rangle\, =\, \left\langle \what{T}\,,\,\mf R[\phi] \right\rangle\, +\, \langle Q \,, \,\phi \rangle\,.
\end{equation*}
Performing the same computations with the distribution $\curl(T)\equiv0$ and a matrix-valued Schwartz function $\Theta$, we see that we have
\begin{equation*}
\langle \curl(T)\,,\, \Theta \rangle \,=\, \left\langle \mc F\big(\curl(T)\big)(\xi)\,,\, \mf R[\Theta](\xi)\right\rangle\, +\, \langle \curl(Q)\,,\, \Theta \rangle\, =\, \langle \curl(Q)\,,\, \Theta \rangle\,,
\end{equation*}
whence we deduce that $\curl(Q) = 0$ in $\mathcal{S}'$. This means that there exists a polynomial $S_2 \in \mathbb{R}[x]$ such that $Q = \nabla S_2$.

Let us look now at the bracket $\langle \what{T}, \mf R[\phi] \rangle$. By virtue of hypothesis \eqref{hyp:T} on $T$, arguing exactly as for the high-frequency part, we get
\begin{equation*}
\left\langle \what{T}(\xi)\,,\,\mf R[\phi](\xi) \right\rangle\,=\, 
\left\langle \what{T}(\xi)\,, \,\xi\, \frac{\xi\cdot\mf R[\phi](\xi)}{|\xi|^2}\right\rangle = \left\langle i\,\xi\, \frac{-i\xi\cdot\what T(\xi)}{|\xi|^2}\,,\, \mf R[\phi](\xi) \right\rangle\,.
\end{equation*}
Therefore, by setting
\begin{equation*}
\langle S_3 \,,\, \vphi \rangle\,:=\, \left\langle  \frac{-i\,\xi\cdot \what T(\xi)}{|\xi|^2}\, ,\,\mf R[\vphi](\xi) \right\rangle\,=\,\left\lan -i\,\what T(\xi)\,,\,\Phi[\vphi](\xi) \right\ran
\qquad\qquad\forall\,\vphi\in\mc S(\R^d;\R)\,,
\end{equation*}
we define a bounded linear functional on the space $\mathcal{S}(\mathbb{R}^d)$, so that $S_3$ is a tempered distribution. In addition, by the previous computations we gather
\begin{equation*}
\left\langle \nabla S_3\, ,\, \phi \right\rangle\, =\, \left\langle \what{T}(\xi)\,,\,\mf R[\phi](\xi)  \right\rangle\,.
\end{equation*}
for all $\phi\in\mc S(\R^d;\R^d)$.

In the end, we have shown that, for any $\phi\in \mc S(\R^d;\R^d)$, the following series of equalities holds:
\begin{align*}
\lan T\,,\,\phi\ran\,=\,\left\lan T\,,\,\big(1-\chi(D)\big)\,\phi \right\ran\,+\,\left\lan T\,,\,\chi(D)\,\phi \right\ran\,=\,\left\lan \nabla S_1\,,\,\phi \right\ran\,+\,
\left\lan \nabla \big(S_2+S_3\big)\,,\,\phi \right\ran\,.
\end{align*}
This implies that $T\,=\,\nabla\big(S_1+S_2+S_3\big)$, and the lemma is proved.
\end{proof}

\section{Reformulation using Els\"asser variables} \label{s:Elsasser}

Let us introduce the so-called \emph{Els\"asser variables} $(\alpha,\beta)$, defined
by the transformation
\[ 
\alpha = u + b \qquad \text{ and } \qquad \beta = u - b.
\] 
In the new set of unknowns $(R,\alpha,\beta)$, the quasi-homogeneous ideal MHD system \eqref{i_eq:MHD-I} can be recasted in the following form:
\begin{equation}\label{eq:MHDab}
\begin{cases}
\partial_t R + \D \left( \dfrac{1}{2} R(\al + \bt) \right) = 0 \\[1ex]
\partial_t \alpha + (\beta \cdot \nabla) \alpha + \dfrac{1}{2}  R \mathfrak{C} (\al + \bt) + \nabla \pi_1 = 0\\[1ex]
\partial_t \beta + (\alpha \cdot \nabla) \beta + \dfrac{1}{2}  R \mathfrak{C} (\al + \bt) + \nabla \pi_2  = 0\\[1ex]
\D(\alpha) = \D(\beta) = 0\,,
\end{cases}
\end{equation}
where $\pi_1$ and $\pi_2$ are (possibly distinct) scalar ``pressure'' functions. In fact, as we will see in a while, in our framework we must have $\nabla \pi_1 = \nabla \pi_2$.

The main goal of this section is to establish equivalence of the two formulations \eqref{i_eq:MHD-I} and \eqref{eq:MHDab} for a large class of data and solutions. Namely, we will do this
in the framework of \emph{weak solutions}, as defined in Subsection \ref{ss:def-weak} below. 

\subsection{Weak solutions} \label{ss:def-weak}

In this subsection, we define the notions of \emph{weak solutions} of both the original ideal MHD system \eqref{i_eq:MHD-I} and the new system \eqref{eq:MHDab},
which are of relevance for us.

Specifically, we require that all terms appearing in the equations, apart from the pressure terms, possess some (at least local) integrability on $[0,T[\,\times \R^d$, where $T>0$. The fact of considering \emph{semi open} time
intervals $[0, T[\,$ allows us to include the case of solutions which may potentially blow up for $t \rightarrow T^-$.

\medskip

We start by defining weak solutions of the original quasi-homogeneous ideal MHD system \eqref{i_eq:MHD-I}.

\begin{defi}\label{d:MHDIdeal}
Let $T > 0$. Consider a set of initial data $(R_0, u_0, b_0) \in L^1_{\rm loc}(\R^d)\times L^1_{\rm loc}(\R^d;\R^d)\times L^1_{\rm loc}(\R^d;\R^d)$,
with $\div(u_0)=\D(b_0) = 0$ in the sense of $\mc D'\big([0,T[\,\times\R^d\big)$.
We say that a triplet $(R, u, b)$ of tempered distributions on $[0,T[\,\times\R^d$ is a weak solution to system \eqref{i_eq:MHD-I}, related to the initial datum $(R_0,u_0,b_0)$,
if the following conditions are satisfied:
\begin{enumerate}[(i)]
\item $(R,u,b)$ belongs to $L^1_{\rm loc}\big([0,T[\,\times\R^d\big)\times L^1_{\rm loc}\big([0,T[\,\times\R^d;\R^d\big)\times L^1_{\rm loc}\big([0,T[\,\times\R^d;\R^d\big)$;
\item the non-linear terms $u\otimes u\,-\,b\otimes b$ and $u\otimes b\,-\,b\otimes u$ belong to $L^1_{\rm loc}\big([0,T[\,\times\R^d;\mc M_d(\R)\big)$ and $R\,u$ belongs to
$L^1_{\rm loc}\big([0,T[\,\times\R^d;\R^d\big)$;
\item the equations are satisfied in the weak sense: for all $\phi \in \mathcal{D}\big([0, T[\, \times \mathbb{R}^d\big)$, we have
\begin{equation} \label{eq:weak-R}
\int_0^t \int_{\R^d} \Big\{ R \partial_t \phi + R u \cdot \nabla \phi \Big\} \dx \dt + \int_{\R^d} R_0 \phi(0) \dx = 0\,;
\end{equation}
for all $\psi\in\mc D\big([0,T[\,\times\R^d;\R^d\big)$ such that $\div\psi=0$,
we have
\begin{equation}\label{eq:WeakMomentum}
\int_0^T \int_{\R^d} \Big\{ u \cdot \partial_t \psi + \big( u \otimes u - b \otimes b \big) : \nabla \psi - R \mathfrak{C} u \cdot \psi \Big\} \dx \dt + \int_{\R^d} u_0 \cdot \psi(0) \dx = 0\,;
\end{equation}
for all $\z \in \mathcal{D}\big( [0, T[\, \times \mathbb{R}^d;\R^d\big)$, we have
\begin{equation}\label{eq:WeakMagnetic}
\int_0^T \int_{\R^d} \Big\{ b  \cdot \partial_t \z + \big( u \otimes b - b \otimes u \big) : \nabla \z \Big\} \dx \dt + \int_{\R^d} b_0 \cdot \z(0) \dx = 0\,;
\end{equation}
\item the divergence-free condition holds in the sense of distributions:
\begin{equation*}
\D (u) = 0 \qquad\qquad \text{ in }\qquad \mathcal{D}'\big(\,]0, T[\,\times \mathbb{R}^d\big)\,.
\end{equation*}
\end{enumerate}
\end{defi}

Note that, because the momentum equation is only tested with divergence-free functions, we cannot deduce from \eqref{eq:WeakMomentum} any form of continuity of the solutions with respect to time,
even in the $\mc D'$ topology. This means that the initial datum is to be understood only in the weak sense.

As we will see below, time continuity follows from extra integrability assumptions on the weak solutions $(R, u, b)$. However, in their absence, it is fairly easy to construct weak bounded
solutions which are discontinuous with respect to time (see Subsection \ref{ss:integrab} for more details).

\medskip

Similarly to Definition \ref{d:MHDIdeal}, we can define weak solutions of the symmetrised system \eqref{eq:MHDab}.

\begin{defi}\label{d:MHDab}
Let $T > 0$. Consider a set of initial data $(R_0, \al_0, \bt_0) \in L^1_{\rm loc}(\R^d)\times L^1_{\rm loc}(\R^d;\R^d)\times L^1_{\rm loc}(\R^d;\R^d)$,
with $\div(\al_0)=\D(\bt_0) = 0$ in the sense of $\mc D'\big([0,T[\,\times\R^d\big)$.
We say that a triplet $(R, \al, \bt)$ of tempered distributions on $[0,T[\,\times\R^d$ is a weak solution to system \eqref{eq:MHDab}, related to the initial datum $(R_0,\al_0,\bt_0)$,
if the following conditions are satisfied:
\begin{enumerate}[(i)]
\item $(R,\al,\bt)$ belongs to $L^1_{\rm loc}\big([0,T[\,\times\R^d\big)\times L^1_{\rm loc}\big([0,T[\,\times\R^d;\R^d\big)\times L^1_{\rm loc}\big([0,T[\,\times\R^d;\R^d\big)$;
\item the tensor product $\al\otimes\bt$ belongs to $L^1_{\rm loc}\big([0,T[\,\times\R^d;\mc M_d(\R)\big)$ and the vector
$R\,(\al+\bt)$ belongs to $L^1_{\rm loc}\big([0,T[\,\times\R^d;\R^d\big)$;
\item the equation for $R$ is satisfied in the weak sense, \tsl{i.e.} equation \eqref{eq:weak-R} where we replace $u$ by $(\al+\bt)/2$,
holds for all $\phi \in \mathcal{D}\big([0, T[\, \times \mathbb{R}^d\big)$;
\item both evolution equations for $\al$ and $\bt$ are satisfied in the weak sense: for any test function $\psi \in \mathcal{D} \big( [0, T[\, \times \mathbb{R}^d;\R^d \big)$ such that
$\D (\psi) = 0$, we have
\begin{equation*}
\int_0^T \int_{\R^d} \Big\{ \al \cdot \partial_t \psi + \big( \bt \otimes \al \big) : \nabla \psi - \frac{1}{2} R \mathfrak{C} (\al + \bt ) \cdot \psi \Big\} \dx \dt +
\int_{\R^d} \al_0 \cdot \psi(0) \dx = 0\,,
\end{equation*}
and likewise for the equation on $\bt$;
\item the divergence-free conditions for $\al$ and $\bt$ hold in the sense of distributions:
\begin{equation*}
\D (\al) = \D (\bt) = 0 \qquad\qquad \text{ in }\qquad \mathcal{D}'\big(\,]0, T[\, \times \mathbb{R}^d\big)\,.
\end{equation*}
\end{enumerate}
\end{defi}




To conclude this part, let us notice that Definitions \ref{d:MHDIdeal} and \ref{d:MHDab} are consistent with systems \eqref{i_eq:MHD-I} and \eqref{eq:MHDab}, even though
they do not state the existence of ``pressure'' fields $\pi$, $\pi_1$ and $\pi_2$. In fact, this is a consequence of Lemma \ref{l:Integrability}.
%
%

%




\subsection{Equivalence of the Els\"asser formulation}\label{ss:symm}

In this section, we are concerned with the way the definitions stated above interact. We are going to prove that both systems \eqref{i_eq:MHD-I} and \eqref{eq:MHDab} are equivalent
for a large class of weak solutions, provided they satisfy some global integrability properties.


\begin{thm}\label{th:symm}
Let $T > 0$. 
%
\begin{enumerate}[(i)]
 \item Consider a set of initial data $(R_0,u_0,b_0)$ enjoying the properties stated in Definition \ref{d:MHDIdeal},
 and let $(R,u,b)$ 
be a weak solution (in the sense of Definition \ref{d:MHDIdeal})
to \eqref{i_eq:MHD-I} related to that initial datum and to some hydrodyamic pressure function $\Pi$. \\ 
Then, after defining $(\al_0, \bt_0) = (u_0+b_0,  u_0-b_0)$ and $(\alpha,\beta)=(u+b,u-b)$, the triplet
$(R,\alpha,\beta)$ 
is a weak solution (in the sense of Definition \ref{d:MHDab}) to system \eqref{eq:MHDab}, with the initial datum $(R_0,\alpha_0,\beta_0)$
and for suitable ``pressure'' functions $\pi_1$ and $\pi_2$ such that $\nabla\pi=\nabla\pi_1=\nabla\pi_2$, where $\pi$ is the MHD pressure defined in \eqref{eq:MHD-p}.
\item Conversely, consider $(R_0,\alpha_0,\beta_0)$ 
as in Definition \ref{d:MHDab} above, and let
$(R,\alpha,\beta)$
be a corresponding weak solution to system \eqref{eq:MHDab}, for suitable ``pressures'' $\pi_1$ and $\pi_2$. Assume that there exist $1\leq p, q<+\infty$ such that $\al_0 - \bt_0 \in L^p$,
$\al-\bt\in L^1_{\rm loc}\big([0,T[\,; L^p(\R^d;\R^d)\big)$ and $\al^j\,\bt^k\in L^1_{\rm loc}\big([0,T[\,;L^q(\R^d)\big)$ for all $1\leq j,k\leq d$. \\
Then, one has $\nabla\pi_1=\nabla\pi_2$. Moreover, after defining  $(u_0,b_0)=\big(\frac{\alpha_0+\beta_0}{2},\frac{\alpha_0-\beta_0}{2}\big)$ and
$(u,b)=\big(\frac{\alpha+\beta}{2},\frac{\alpha-\beta}{2}\big)$, the triplet $(R,u,b)$ 
is a weak solution (in the sense of Definition \ref{d:MHDIdeal}) to system \eqref{i_eq:MHD-I} related to the initial datum $(R_0,u_0,b_0)$,
for a suitable hydrodynamic pressure function $\Pi$ such that $\nabla\Pi=\nabla\big(\pi_1-|b|^2/2\big)$.
\end{enumerate}
\end{thm}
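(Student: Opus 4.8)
\emph{Proof plan.} The two implications are of very different natures: statement (i) is essentially bookkeeping, while the entire difficulty of the theorem is concentrated in the identity $\nabla\pi_1=\nabla\pi_2$ of statement (ii). For (i), I would first record the algebraic identities
$$\al\otimes\bt\,=\,(u\otimes u-b\otimes b)-(u\otimes b-b\otimes u),\qquad \bt\otimes\al\,=\,(u\otimes u-b\otimes b)+(u\otimes b-b\otimes u),\qquad R(\al+\bt)=2\,Ru,$$
which show at once that the integrability requirements of Definition \ref{d:MHDab} follow from those of Definition \ref{d:MHDIdeal}, and that $\div\al=\div\bt=0$. The $R$-equation is unchanged once one writes $u=(\al+\bt)/2$. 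For the other two, I would simply add (resp. subtract) the weak formulations \eqref{eq:WeakMomentum} and \eqref{eq:WeakMagnetic} tested against the same divergence-free field $\psi$: with $\al=u+b$, $\bt=u-b$ and the identities above, this produces exactly the weak $\al$- and $\bt$-equations of Definition \ref{d:MHDab}. The claim about the pressures then follows from Lemma \ref{l:Integrability}: testing in time reduces \eqref{eq:WeakMomentum} to a spatial identity, so the momentum operator equals a gradient $-\nabla\pi$ (an MHD pressure), while the magnetic equation is pressure-free since it is tested against all fields; adding and subtracting these strong forms yields $\nabla\pi_1=\nabla\pi_2=\nabla\pi$.

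For (ii), I would begin by applying Lemma \ref{l:Integrability} to the weak $\al$- and $\bt$-equations to produce the pressures $\pi_1,\pi_2$ and the associated strong equations. Defining $u,b$ as in the statement, the hypotheses $\al-\bt\in L^p$ and $\al^j\bt^k\in L^q$ guarantee, via the same identities as above, that $(R,u,b)$ satisfies the requirements of Definition \ref{d:MHDIdeal}; the $R$-equation and the divergence-free–tested momentum equation \eqref{eq:WeakMomentum} then follow by taking the half-sum of the $\al$- and $\bt$-equations, exactly reversing the computation of part (i).

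The heart of the matter is $\nabla\pi_1=\nabla\pi_2$. Subtracting the strong $\bt$-equation from the strong $\al$-equation, the Coriolis terms cancel and I obtain
$$2\,\partial_t b\,+\,2\,\div\big(u\otimes b-b\otimes u\big)\,+\,\nabla(\pi_1-\pi_2)\,=\,0.$$
Taking the spatial divergence, the first term vanishes because $\div b=\tfrac12(\div\al-\div\bt)=0$, and the second vanishes because $u\otimes b-b\otimes u$ is antisymmetric, so that $\partial_i\partial_j(u_ib_j-b_iu_j)=0$. Hence $\Delta(\pi_1-\pi_2)=0$: the pressure difference is spatially harmonic. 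To conclude that its gradient is zero I would test the displayed identity against $\eta(t)\,\zeta(x)$ and integrate by parts in time, turning it into a purely spatial relation $\nabla(\pi_1-\pi_2)_\eta=f-\div g$ with $f\in L^p$ (built from $b$) and $g\in L^q$ (built from $u\otimes b-b\otimes u$). Since the left-hand side is harmonic and tempered, it is a harmonic polynomial; a scaling argument, pairing with $\zeta(\cdot/\lambda)$ and letting $\lambda\to+\infty$ (the polynomial grows like $\lambda^{d+\deg}$, while the $L^p$ and $\div L^q$ contributions grow at most like $\lambda^{d/p'}$ and $\lambda^{d/q'-1}$, both strictly below $\lambda^d$ because $p,q<+\infty$), forces this polynomial to vanish. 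This is exactly the step where finite integrability is indispensable — in a pure $L^\infty$ framework a nonzero constant gradient would survive, which is the mechanism behind the announced counterexamples. I expect this harmonicity-plus-Liouville step to be the main obstacle. Once it is done, $\nabla\pi_1=\nabla\pi_2$, the strong interior magnetic equation $\partial_t b+\div(u\otimes b-b\otimes u)=0$ holds, and the pressure identification $\nabla\Pi=\nabla(\pi_1-|b|^2/2)$ follows from $\pi=\pi_1$ together with $\pi=\Pi+|b|^2/2$.

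It remains to upgrade this interior equation to the full weak formulation \eqref{eq:WeakMagnetic}, valid against \emph{all} test fields. Since $\partial_t b=-\div(u\otimes b-b\otimes u)\in L^1_{\rm loc}([0,T[\,;W^{-1,q})$, the field $b$ has a trace $b(0)\in W^{-1,q}$, and integrating \eqref{eq:WeakMagnetic} by parts in time while using the interior equation reduces the defect $\int\{b\cdot\partial_t\zeta+(u\otimes b-b\otimes u):\nabla\zeta\}+\int b_0\cdot\zeta(0)$ to the boundary term $\langle b_0-b(0),\zeta(0)\rangle$. The divergence-free–tested magnetic equation (the half-difference of the $\al$- and $\bt$-equations, which does hold) shows $\langle b_0-b(0),\phi\rangle=0$ for every divergence-free $\phi$; Lemma \ref{l:Integrability} then expresses $b_0-b(0)$ as a gradient, and since it is also divergence-free (hence harmonic) and lies in $L^p+W^{-1,q}$, the same scaling argument yields $b_0=b(0)$. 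Thus the defect vanishes for all $\zeta$, completing the proof.
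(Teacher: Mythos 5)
Your proposal is correct and follows the same overall strategy as the paper: the algebraic identities for the tensor products, Lemma \ref{l:Integrability} to produce the pressures, harmonicity of $\pi_1-\pi_2$ (hence a harmonic polynomial, being tempered), and the observation that the finite integrability exponents $p,q<+\infty$ force this polynomial's gradient to vanish. Where you genuinely diverge is in the implementation of the two delicate technical steps. For the Liouville step, the paper extends the solution by zero to negative times (absorbing the initial datum as $\delta_0(t)\otimes b_0(x)$ in the equation), mollifies in time, embeds $L^p+W^{-1,q}$ into a single space $B^{-k}_{p,\infty}$, and then uses that the low-frequency block $\Delta_{-1}$ fixes polynomials together with $\R[x]\cap L^p=\{0\}$; you instead pair against $\eta(t)\zeta(x/\lambda)$ and let $\lambda\to+\infty$, comparing the growth $\lambda^{d+\deg}$ of a nonzero polynomial with the $O(\lambda^{d/p'})$ and $O(\lambda^{d/q'-1})$ bounds on the right-hand side. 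Your scaling argument is more elementary and self-contained (no Besov embeddings or mollification needed for the interior statement), at the cost of a short induction on the degree of the top homogeneous part. For the initial datum, the paper recovers the full weak formulation \eqref{eq:WeakMagnetic} directly by passing to the limit $\veps\to0^+$ in the mollified equation containing the Dirac mass, whereas you first derive the interior equation, deduce that $b$ has a time trace $b(0)\in L^p+W^{-1,q}$, show $b_0-b(0)$ is orthogonal to divergence-free test functions, and kill it by the same gradient-plus-harmonic-plus-scaling argument. Both routes are sound; yours makes the role of the initial trace more explicit, while the paper's Dirac-mass device treats interior and initial time in one stroke.
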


Before proving the previous theorem, some remarks are in order. 

\begin{rmk} \label{r:integr_1}
Here, we comment about the integrability hypotheses of the previous statement. We will discuss their sharpness in Subsection \ref{ss:integrab} below.
\begin{enumerate}[(a)]
 \item The integrability assumption $\al^j\,\bt^k\in L^1_{\rm loc}\big([0,T[\,;L^q(\R^d)\big)$ for all $1\leq j,k\leq d$ has been formulated for simplicity of presentation.
As it will appear clear from our proof, the result still holds true if one replaces it by the following
weaker condition: for all $1\leq j,k\leq d$, there exists Lebesgue exponents $q_{jk}\in[1,+\infty[\,$ such that $\al^j\,\bt^k\in L^1_{\rm loc}\big([0,T[\,;L^{q_{jk}}(\R^d)\big)$.

\item The space integrability on $\al-\bt$ comes for free if one works with initial data possessing some global integrability conditions. As long as $\al$ and $\bt$
remain Lipschitz continuous, those integrability properties are ``automatically'' propagated in time, thanks to the transport structure underlying 
system \eqref{eq:MHDab}.

\item Owing to the (formal) energy conservation law \eqref{eq:BasicEN} for the quasi-homogeneous ideal MHD system (which is also shared by the Els\"asser counterpart),
it is natural to work with $p=2$. This will be also the case in the present paper.
\end{enumerate}
\end{rmk}

\begin{rmk} \label{r:equiv}
The equivalence between systems \eqref{i_eq:MHD-I} and \eqref{eq:MHDab}, stated in Theorem \ref{th:symm}, covers two interesting cases: firstly weak solutions in the energy space
$(R, u, b) \in L^\infty_T(L^\infty \times L^2 \times L^2)$, in which case we may take $p =  2$ and $q = 1$,
and secondly solutions which lie in the Besov spaces $L^\infty_T(B^{s}_{p,r})$, provided that $1\leq p<+\infty$ and that $B^s_{p,r}\hookrightarrow W^{1, \infty}$
(\tsl{i.e.} under condition \eqref{i_eq:Lip2} above), this situation falling into the scope of the previous statement with $p=q$.

As we will see later on, to cover the case of the Lebesgue exponent $p = +\infty$, we will work with solutions that are both in the Besov space $L^\infty_T(\B)$ \text{and} in the energy space.
\end{rmk}

We can now prove the previous theorem.

\begin{proof}[Proof of Theorem \ref{th:symm}]
First of all, assume that $(R, u, b)$ is a weak solution of \eqref{i_eq:MHD-I}, according to Definition \ref{d:MHDIdeal}.
We check that $(R,\al, \bt)$ satisfies Definition \ref{d:MHDab}.

Writing the tensor product differences that appear in \eqref{i_eq:MHD-I} as functions of $\alpha$ and $\beta$, we get
\begin{align}
u \otimes u - b \otimes b & = (u + b) \otimes (u - b) + b \otimes u  - u \otimes b = \alpha \otimes \beta - \big( u \otimes b  - b \otimes u \big) \label{eq:bilin} \\
& = (u - b) \otimes (u + b) - b \otimes u + u \otimes b = \beta \otimes \alpha + \big( u \otimes b - b \otimes u \big) \,. \nonumber
\end{align}
Taking the sum and the difference of the weak forms \eqref{eq:WeakMomentum} and \eqref{eq:WeakMagnetic} of the momentum and the magnetic field equations in Definition \ref{d:MHDIdeal}, we immediately see that $(R,\al, \bt)$ satisfies Definition \ref{d:MHDab}, with the appropriate initial datum. The local integrability of $\al \otimes \bt$ follows from \eqref{eq:bilin}.

\medbreak

Conversely, suppose that $(R, \al, \bt)$ is a weak solution of system \eqref{eq:MHDab}, as in Definition \ref{d:MHDab}.
As we wish to manipulate the ``pressure'' terms in \eqref{eq:MHDab}, we use Lemma \ref{l:Integrability}. Thus, we may fix two tempered distributions
$\pi_1, \pi_2 \in \mathcal{S}'\big([0, T[\, \times \mathbb{R}^d\big)$ such that 
\begin{equation} \label{eq:a-b}
\left\{\begin{array}{l}
        \partial_t \alpha + (\beta \cdot \nabla) \alpha + \dfrac{1}{2}  R \mathfrak{C} (\al + \bt) + \nabla \pi_1 = 0 \\[1ex]
	 \partial_t \beta + (\alpha \cdot \nabla) \beta + \dfrac{1}{2}  R \mathfrak{C} (\al + \bt) + \nabla \pi_2  = 0\,.
       \end{array}
       \right.
\end{equation}
Taking the sum and the difference of these two relations, we recover the equations for the quantities $u=(\al+\bt)/2$ and $b=(\al-\bt)/2$:
\begin{equation} \label{eq:u-b}
\left\{\begin{array}{l}
\partial_t u + \D \big(u \otimes u - b \otimes b \big) + R \mathfrak{C} u + \dfrac{1}{2} \nabla \big( \pi_1 + \pi_2 \big) = 0 \\[1ex]
\partial_t b + \D \big( u \otimes b - b \otimes u \big) = \dfrac{1}{2} \nabla \big( \pi_2 - \pi_1 \big)\,.
       \end{array}
       \right.
\end{equation}
Notice that the integrability of the non-linear terms appearing in \eqref{eq:u-b} immediately follows from taking the sum and the difference of the two equations in \eqref{eq:bilin}.
Thus, in order to prove that $(R,u,b)$ is a weak solution of the original system \eqref{i_eq:MHD-I} in the sense of Definition \ref{d:MHDIdeal}, we 
must show that $\nabla \pi_1 = \nabla \pi_2$. From this property it will also follow that we can take $\nabla\pi=\nabla\pi_1$ in the first equation of \eqref{eq:u-b},
and then define the hydrodynamic pressure $\Pi$ according to \eqref{eq:MHD-p}.

To prove that $\nabla \pi_1 = \nabla \pi_2$, we start by remarking that $\pi_2 - \pi_1$ is in fact a harmonic function: taking the divergence of the magnetic field equation, we get the Laplace equation
\begin{equation*}
- \Delta \big( \pi_1 - \pi_2 \big) = 2\sum_{i,j}\partial_i \partial_j \big( u_i b_j - b_i u_j \big) = 0\,.
\end{equation*}
Since both $\pi_1$ and $\pi_2$ are tempered distributions, this means that the distributions $Q(t)\,:=\,\pi_1(t) - \pi_2(t)$ must be harmonic polynomials $Q(t) \in \mathbb{R}[x]$.
However, taking the difference of the equations in \eqref{eq:a-b} yields
\[
\nabla Q\,=\,-\div\big(\bt\otimes\al\,-\,\al\otimes\bt\big)\,-\,\d_t(\al-\bt)\, ,
\]
so, in view of the integrability assumptions on $\alpha-\beta$ and $\alpha\otimes\beta$, we retrieve integrability also for $Q=\pi_1-\pi_2$:
we discover that, for all $t\in[0,T[\,$, one has
\begin{equation*}
\nabla Q\, \in\, W^{-1,1}\big([0,t];L^p \big)\, +\, L^1\big([0,t];W^{-1,q} \big) \hookrightarrow W^{-1, 1} \big(  [0, t[ ; L^p + W^{-1, q} \big) \,.
\end{equation*}
Now, the heart of the argument consists in proving that the only polynomial lying in the space $L^p  + W^{-1, q}$ is zero: functions belonging to this space are
(in some loose sense) small at infinity, since \emph{both Lebesgue exponents are finite}, that is to say $p, q < +\infty$. 

We implement this argument. To avoid dealing with functions of low time regularity, we take a mollification kernel
$\big( K_\veps \big)_{\veps > 0}$, with each $K_\eps (t) = \frac{1}{\veps}\,K_1 \left( \frac{t}{\veps} \right)$ having compact support. Before convoluting $K_\epsilon$ with the momentum equation, we need to extend the solution $(R, u, b)$ to a function defined on all times $t \in \mathbb{R}$. As we wish to capture the behavior of $\nabla Q$ on the whole semi-closed interval $[0, T[$, that is including the initial time $t = 0$, we must be careful on how we make this extention.

By noting $\widetilde{f}(t)$ the extension to $t \in \mathbb{R}$ of a function $f(t)$ (of nonnegative times $t \geq 0$) defined by $f(t) = 0$ whenever $t < 0$,
we see from the weak form \eqref{eq:WeakMagnetic} of the magnetic field equation that $\big(\widetilde{u}, \widetilde{b}\big)$ solves a new equation
\begin{equation}\label{eq:tensorInitial}
\partial_t \widetilde{b} + \D \left( \widetilde{u} \otimes \widetilde{b} - \widetilde{b} \otimes \widetilde{u} \right) + \nabla \widetilde{Q} = \delta_{0}(t) \otimes b_0(x),
\end{equation}
where the initial datum condition is incorporated in the left-hand side of the equation. The tensor product $\delta_{ 0}(t) \otimes b_0(x)$ of these two distributions of the time and space variables is defined by
\begin{equation*}
\forall \phi \in \mc D \left( \mathbb{R} \times \mathbb{R}^d;\R^d \right), \qquad \big\langle \delta_{0}(t) \otimes b_0(x), \phi(t, x) \big\rangle_{\mc D' \times \mc D} :=
\int_{\mathbb{R}^d} b_0(x) \cdot \phi(0, x) \dx.
\end{equation*}
We may now convolute \eqref{eq:tensorInitial} by $K_\epsilon$. We find
\begin{align*}
\partial_t (K_\veps * \wtilde{b}) + K_\veps * \D \big( \wtilde{u} \otimes \wtilde{b} - \wtilde{b} \otimes \wtilde{u} \big) + \nabla (K_\veps * \wtilde{Q}) & = \big(K_\epsilon * \delta_{0}\big)(t) \otimes b_0(x) \\
&  = K_\epsilon(t) b_0(x),
\end{align*}
so that we have
\begin{equation*}
\nabla (K_\epsilon * \widetilde{Q}) \in C^\infty \big( \mathbb{R} ;  L^p + W^{-1, q} \big).
\end{equation*}
Assume for a while the condition $q\leq p$: then, thanks to Proposition \ref{p:embed}, we get the chain of embeddings \begin{equation*}
W^{-1, q} \hookrightarrow B^{-1}_{q, \infty} \hookrightarrow B^{-1 - d \left( 1/q - 1/p \right)}_{p, \infty} := B^{-k}_{p, \infty}\,.
\end{equation*}
Therefore, we have the inclusion $L^p + W^{-1, q} \hookrightarrow B^{-k}_{p, \infty}$. In the case $p<q$, instead, we get a similar inclusion, in the space $B^{-k'}_{q,\infty}$, for a new $k'>0$.
Now, assume that $q\leq p$ (the case $p < q$ is perfectly symmetric) and work with $B^{-k}_{p,\infty}$ during the rest of the proof.
Now, recall from Subsection \ref{ss:LP} the low frequency Littlewood-Paley block $\Delta_{-1} = \chi(D)$. In view of the embedding above, we have, for all times $t \in \,]-T, T[\,$,
\begin{equation*}
\Delta_{-1} \nabla \big( K_\veps * \wtilde{Q} \big)(t) \in L^p\,.
\end{equation*}
On the other hand, since the Fourier transform of a polynomial is merely a linear combination of the Dirac mass $\delta_0$ and its derivatives, the space of polynomials stays untouched
by $\Delta_{-1}$, which is a Fourier multiplier by a smooth function $\chi$ equal to unit value on a neighborhood of
$\xi = 0$. 
Therefore, we conclude that
\begin{equation*}
\forall\, t \in\, ]-T, T[\,, \qquad \nabla \big(K_\veps * \wtilde{Q}\big) (t) = \Delta_{-1} \nabla \big(K_\veps * \wtilde{Q}\big) (t) \,\in\, \mathbb{R}[x] \cap L^p = \{ 0 \}\,,
\end{equation*}

We have shown that, for all $\epsilon > 0$, we have
\begin{equation*}
\partial_t (K_\epsilon * \widetilde{b}) + K_\epsilon * \D \left( \widetilde{u} \otimes \widetilde{b} - \widetilde{b} \otimes \widetilde{u} \right) = K_\epsilon \, b_0.
\end{equation*}
We may take the limit $\epsilon \rightarrow 0^+$ in the equation, in the sense of distributions, to obtain, in the end
\begin{equation*}
\partial_t \widetilde{b} + \D \left( \widetilde{u} \otimes \widetilde{b} - \widetilde{b} \otimes \widetilde{u} \right) = \delta_{0}(t) \otimes b_0(x),
\end{equation*}
in the space $\mc D' (\mathbb{R} \times \mathbb{R}^d)$, which is equivalent to saying that $(R, u, b)$ is indeed a weak solution of \eqref{i_eq:MHD-I}, in the sense of Definition \ref{d:MHDIdeal}.
This completes the proof of the theorem.
\end{proof}

Next, we can prove that, under similar global integrability assumption for the Els\"asser variables $\big(\al,\bt\big)$, equations \eqref{eq:MHDab} can be projected onto the
space of divergence-free vector fields, finding a new
equivalent formulation of the system. For this, we are going to make use of the Leray projector $\P$, whose definition and basic properties have been recalled at the end of Subsection \ref{ss:LP}.

More precisely, we are going to show the following result.
\begin{thm} \label{t:equiv-P}
Consider a weak solution $(R,\al,\bt)$ to system \eqref{eq:MHDab} in the sense of Definition \ref{d:MHDab}, related to the initial datum $(R_0,\al_0,\bt_0)$, with
both $\alpha_0$ and $\beta_0$ being divergence-free. 
Assume that there exists a triplet of indices $(p_1,p_2,p_3)\in[1,+\infty[\,^3$ such that the following conditions are verified: $\al_0, \bt_0 \in L^{p_1}$, 
$\al,\bt\,\in\, L^1_{\rm loc}\big([0,T[\,;L^{p_1}(\R^d;\R^d)\big)$, $R\,(\al+\bt)\in L^1_{\rm loc}\big([0,T[\,;L^{p_2}(\R^d;\R^d)\big)$ and
$\al^j\,\bt^k\in L^1_{\rm loc}\big([0,T[\,;L^{p_3}(\R^d)\big)$ for all $1\leq j,k\leq d$.


Then, the Els\"asser variables $(R, \al, \bt)$ solve the following system, in the weak sense:
\begin{equation}\label{eq:EquivEq}
\begin{cases}
\partial_t R + \dfrac{1}{2}\D \big( R (\al + \bt) \big) = 0 \\[1ex]
\partial_t \al + \mathbb{P} \D (\bt \otimes \al) + \dfrac{1}{2} \mathbb{P} \big( R \mathfrak{C} (\al + \bt) \big) = 0\\[1ex]
\partial_t \bt + \mathbb{P} \D (\al \otimes \bt) + \dfrac{1}{2} \mathbb{P} \big( R \mathfrak{C} (\al + \bt) \big) = 0\,.
\end{cases}
\end{equation}
\end{thm}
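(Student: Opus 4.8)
The plan is to eliminate the pressure not by applying the Leray projector $\P$ to the low-integrability terms of the equations, but by transferring $\P$ onto the test functions, where it acts on explicit smooth objects and creates no difficulty. Since the $R$-equation in \eqref{eq:EquivEq} is identical to the one in \eqref{eq:MHDab}, there is nothing to prove for it, and by symmetry it suffices to treat the $\al$-equation. Recalling from \eqref{eq:P-op} that $\P\psi_0=\psi_0+\nabla h$ with $h=(-\Delta)^{-1}\D\psi_0$, for any $\psi_0\in\mc D\big([0,T[\,\times\R^d;\R^d\big)$ the field $\Psi:=\P\psi_0$ (with $\P$ acting on the space variable only) is smooth, divergence-free, and decays like $|x|^{-d}$ at spatial infinity, while $\nabla\Psi$ decays like $|x|^{-d-1}$. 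I would then \emph{define} the two projected nonlinear terms by duality, setting
\begin{equation*}
\big\langle \P\,\D(\bt\otimes\al)\,,\,\psi_0\big\rangle\,:=\,-\int_0^T\!\!\int_{\R^d}(\bt\otimes\al):\nabla\Psi\,\dx\,\dt\,,\qquad
\big\langle \P\big(R\,\mathfrak{C}(\al+\bt)\big)\,,\,\psi_0\big\rangle\,:=\,\int_0^T\!\!\int_{\R^d}R\,\mathfrak{C}(\al+\bt)\cdot\Psi\,\dx\,\dt\,.
\end{equation*}
These pairings are finite precisely because the exponents are finite: $\Psi\in L^{p_2'}$ and $\nabla\Psi\in L^{p_3'}$ with $p_2',p_3'>1$, so they pair with $R\,\mathfrak{C}(\al+\bt)\in L^{p_2}$ and $\bt\otimes\al\in L^{p_3}$, respectively.

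With these definitions, proving that $(R,\al,\bt)$ solves the $\al$-equation of \eqref{eq:EquivEq} weakly is exactly the statement that, for every $\psi_0$,
\begin{equation*}
\int_0^T\!\!\int_{\R^d}\Big\{\al\cdot\partial_t\psi_0+(\bt\otimes\al):\nabla\Psi-\tfrac12\,R\,\mathfrak{C}(\al+\bt)\cdot\Psi\Big\}\,\dx\,\dt+\int_{\R^d}\al_0\cdot\psi_0(0)\,\dx=0\,.
\end{equation*}
Now $\Psi$ and $\psi_0$ differ by the gradient $\nabla h$, and since $\al$ and $\al_0$ are divergence-free the gradient is annihilated, giving $\int\al\cdot\partial_t\psi_0=\int\al\cdot\partial_t\Psi$ and $\int\al_0\cdot\psi_0(0)=\int\al_0\cdot\Psi(0)$. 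Hence the displayed identity is nothing but the weak formulation of the $\al$-equation of \eqref{eq:MHDab}, in the sense of Definition \ref{d:MHDab}, with the divergence-free field $\Psi=\P\psi_0$ used as a test function. Since $\Psi$ is divergence-free by construction, the whole theorem reduces to a single point: that the weak formulation of Definition \ref{d:MHDab}, which is assumed only for \emph{compactly supported} divergence-free test fields, remains valid for the merely decaying field $\Psi$.

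This extension is the crux of the argument, and it is here that the hypothesis $p_1,p_2,p_3<+\infty$ is essential. The integrability assumptions make each term of the weak formulation a continuous functional of the test field with respect to the norms $L^\infty_T(L^{p_1'})$ (for the field and its time derivative, against $\al$ and $\al_0$), $L^1_T(L^{p_2'})$ (against $R\,\mathfrak{C}(\al+\bt)$) and $L^1_T(L^{p_3'})$ for the gradient (against $\bt\otimes\al$); all dual exponents exceed $1$, and $\Psi$, $\partial_t\Psi$, $\nabla\Psi$ lie in the corresponding spaces by the decay recalled above. It then suffices to approximate $\Psi$ by compactly supported divergence-free fields in these norms. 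I would do this through an antisymmetric potential $A=(A_{jk})$, with $A_{jk}=\partial_j(-\Delta)^{-1}\Psi_k-\partial_k(-\Delta)^{-1}\Psi_j$, so that $\Psi_j=\sum_k\partial_kA_{jk}$, and truncate it as $A^{(n)}:=\zeta(\cdot/n)\,A$ for a fixed cutoff $\zeta\in\mc D$ equal to $1$ near the origin. The fields $\Psi^{(n)}_j:=\sum_k\partial_kA^{(n)}_{jk}$ are then compactly supported and divergence-free, the antisymmetry of $A^{(n)}$ forcing $\D\Psi^{(n)}=0$, and estimating the commutators between the truncation and the derivatives shows $\Psi^{(n)}\to\Psi$, $\partial_t\Psi^{(n)}\to\partial_t\Psi$ and $\nabla\Psi^{(n)}\to\nabla\Psi$ in the relevant $L^{p_i'}$ norms, once more because every $p_i'>1$. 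Writing Definition \ref{d:MHDab} for each $\Psi^{(n)}$ and passing to the limit $n\to+\infty$ gives it for $\Psi$, and the theorem follows. The mechanism breaks down precisely when some $p_i=+\infty$: a dual exponent then equals $1$, the decay of $\P\psi_0$ no longer guarantees the required integrability, and $\P$ would effectively have to act on genuinely bounded, non-decaying data, which is the source of the counterexamples announced in the introduction.
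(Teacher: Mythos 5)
Your proof is correct, but it takes a genuinely different route from the paper's. The paper argues through the pressure: it first invokes (the proof of) Theorem \ref{th:symm} to obtain a single gradient $\nabla\pi_1=\nabla\pi_2=\nabla\pi$, then takes the divergence of the equations to identify $\nabla\pi$ with the singular-integral expression $\nabla(-\Delta)^{-1}(\cdots)$ up to the gradient of a harmonic polynomial $Q(t)$, and finally kills $\nabla Q$ by extending by zero in time, mollifying in $t$, applying the low-frequency block $\Delta_{-1}$, and observing that a polynomial lying in a Besov space built on $L^{p_1}$ with $p_1<+\infty$ must vanish. You bypass the pressure entirely: you transfer $\P$ onto the test function by duality, which reduces the whole theorem to the legitimacy of using the decaying divergence-free field $\P\psi_0$ as a test function in Definition \ref{d:MHDab}, and you obtain that by truncating an antisymmetric potential. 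This is more elementary (no harmonic polynomials, no time mollification, no detour through Theorem \ref{th:symm}) and it isolates very cleanly where the finiteness of the exponents enters: $\P\psi_0$ decays like $|x|^{-d}$, hence lies in $L^q$ only for $q>1$, and the cut-off errors vanish in $L^{p_i'}$ only when $p_i<+\infty$. What the paper's route buys in exchange is an explicit identification of the pressure as $\nabla(-\Delta)^{-1}$ of the nonlinearity with no harmonic part, which is reused later when establishing the regularity of $\nabla\pi_1$.

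Three points deserve to be made explicit in a final write-up. First, you should check that your duality definition of $\P\,\D(\bt\otimes\al)$ and $\P\big(R\,\mathfrak{C}(\al+\bt)\big)$ coincides with the Fourier-multiplier definition \eqref{eq:LerayFM} adopted in the paper; this follows from the symmetry of the symbol $m$ and the membership of the data in $L^{p_i}$ with $p_i<+\infty$, but it is what guarantees you are proving the same statement. Second, replacing $\partial_t\psi_0$ by $\partial_t\Psi$ and $\psi_0(0)$ by $\Psi(0)$ pairs the divergence-free fields $\al,\al_0\in L^{p_1}$ against gradients that are only decaying, not compactly supported, so it requires the same cut-off argument as your main step (and again uses $p_1<+\infty$). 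Third, in dimension $d=2$ the potential $A_{jk}$ should be defined directly through the locally integrable symbol $i\xi_j|\xi|^{-2}$ acting on $\widehat{\Psi_k}$, since $(-\Delta)^{-1}\Psi_k$ by itself is not a well-defined tempered distribution there; the antisymmetric combination is fine and still decays like $|x|^{-(d-1)}$, which is all your estimates need.
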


Notice that the functions $\P\big(R\,\mf C(\al+\bt)\big)$ and $\mathbb{P} \D (\bt \otimes \al)$ are well-defined, since $R\,\mf C(\al+\bt)\in L^1_{\rm loc}([0,T[\,;L^{p_2})$ and
$\bt \otimes \al \in L^1_{\rm loc}([0,T[\,;L^{p_3})$, with $p_2,p_3 < +\infty$. So, each term in \eqref{eq:EquivEq} is well-defined. Also note that, as we will see later on,
Theorem \ref{t:equiv-P} is no longer true if $\al$ and $\bt$ are merely bounded.

Let us now present the proof of the previous theorem. The argument is similar to the one used for proving Theorem \ref{th:symm} above.

\begin{proof}[Proof of Theorem \ref{t:equiv-P}]
From the proof of Theorem \ref{th:symm}, we already know that, under our assumptions, the couple $(\al,\bt)$ solves equations
\eqref{eq:a-b} with $\nabla\pi_1=\nabla\pi_2\,=\,\nabla\pi$, for a suitable tempered distribution $\pi$ which is uniquely determined up to an additive constant.

At this point, we proceed more or less as before: taking the divergence of the equations and solving the elliptic equation thus produced, we obtain
\begin{equation}\label{eq:QPolynomial}
\nabla \pi = \nabla (- \Delta)^{-1} \left(\sum_{i,j} \partial_i \partial_j \big( \al_i \bt_j\big)  +  \frac{1}{2} \D \big(R \mathfrak{C} (\al + \bt) \big) \right) + \nabla Q\,,
\end{equation}
where the functions $Q(t) \in \mathbb{R}[x]$ are harmonic polynomials. Incidentally, note that the inverted Laplacian, defined as a Fourier multiplier, poses no problem in \eqref{eq:QPolynomial} above,
as the operator $\nabla (-\Delta)^{-1}\D$ has a bounded symbol and as both functions $\al \otimes \bt$ and $R \mathfrak{C}(\al + \bt)$ belong to some $L^r$ space, with $r < +\infty$.
Inserting \eqref{eq:QPolynomial} in the equations, we see that 
\begin{equation*}
- \nabla Q = \partial_t \al + \big( I + \nabla (- \Delta)^{-1} \D  \big)   \Big( \D (\bt \otimes \al) + \frac{1}{2} R \mathfrak{C}(\al + \bt)  \Big)
\end{equation*}
so the polynomial function $\nabla Q$ must belong to the space $W^{-1,1}_t(L^{p_1}) + L^1_t(W^{-1,p_3}+L^{p_2})$, for all $t<T$. Assuming, without loss of generality, that $p_2,p_3\leq p_1$,
using embeddings as above we infer that 
$\nabla Q\in  W^{-1, 1}_{\rm loc}([0,T[\,;B^{-k}_{p_1, \infty})$, for some finite $k>0$.
Arguing exactly as before, by incorporating the initial value condition $\delta_{0}(t) \otimes \al_0(x)$ in the equation, noting $\nabla \wtilde{Q} (t)$ the extention by $0$ of $\nabla Q (t)$ to $t \in \mathbb{R}$ and convoluting by $K_\veps$, we see that
\begin{equation*}
\forall\, t \in \,]-T, T[\,, \qquad  \Delta_{-1} \nabla \big( K_\veps * \wtilde{Q}  \big)  (t)\, \in\, \mathbb{R}[x] \cap L^{p_1}\,,
\end{equation*}
which implies that $\nabla (K_\veps * \widetilde{Q}) = 0$ in the space $\mc D'(\mathbb{R} \times \mathbb{R}^d)$. Taking the limit $\veps \rightarrow 0^+$, we infer that 
\begin{equation*}
\partial_t \widetilde{\al} + \big( I + \nabla (- \Delta)^{-1} \D  \big)   \Big( \D \left( \widetilde{\bt} \otimes \widetilde{\al} \right) + 
\frac{1}{2} \widetilde{R} \mathfrak{C} \left( \widetilde{\al} + \widetilde{\bt} \right)  \Big) = \delta_{ 0}(t) \otimes \al_0(x)
\end{equation*}
in the space $\mc D' (\mathbb{R}\times \mathbb{R}^d)$, and likewise for $\bt$. These relations give us exactly \eqref{eq:EquivEq}.
\end{proof}

\begin{rmk} \label{r:commutator}
By introducing commutators of operators, we see that \eqref{eq:EquivEq} is in fact a system of transport equations:
\begin{equation*} 
\begin{cases}
\partial_t R + \dfrac{1}{2}\,\D \big( R(\al + \bt) \big) = 0 \\[1ex]
\partial_t \alpha + (\beta \cdot \nabla) \alpha + \dfrac{1}{2} \mathbb{P} \big( R \mathfrak{C} (\al + \bt) \big) = \big[ \beta \cdot \nabla, \, \mathbb{P} \big]\alpha \\[1ex]
\partial_t \beta + (\alpha \cdot \nabla) \beta + \dfrac{1}{2} \mathbb{P} \big( R \mathfrak{C} (\al + \bt) \big) = \big[ \alpha \cdot \nabla, \, \mathbb{P} \big]\beta\,.
\end{cases}
\end{equation*}
In the above system, the transport operators are to be understood in the weak sense: thanks to the divergence-free conditions, we can write
\begin{equation*}
(\bt \cdot \nabla) \al = \D (\bt \otimes \al) \qquad \text{and} \qquad \big[ \bt \cdot \nabla, \mathbb{P} \big] \al = (I - \mathbb{P}) \D (\bt \otimes \al)\,.
\end{equation*}
\end{rmk}

Formulation \eqref{eq:EquivEq} of the quasi-homogeneous ideal MHD system, or better the last formulation of Remark \ref{r:commutator}, has been broadly exploited
in \cite{Cobb-F_Rig}, for studying well-posedness in critical Besov spaces $B^s_{p,r}\hookrightarrow W^{1,\infty}$ when $1<p<+\infty$.
Here, for dealing with the case $p=+\infty$, we will not apply the Leray projector, and work instead with the vorticity formulation
of the equations.

\subsection{Counterexamples to the equivalence, and uniqueness issues} \label{ss:integrab}

Now, we make a few concluding remarks concerning the equivalence of formulations \eqref{i_eq:MHD-I}, \eqref{eq:MHDab} and \eqref{eq:EquivEq} of the quasi-homogeneous ideal MHD system.
This reveals to be a rather delicate issue, as one may guess looking at the statements of Theorems \ref{th:symm} and \ref{t:equiv-P}.

\subsubsection{About the integrability assumptions}

We start by observing the following fact: proving that \eqref{eq:EquivEq} is equivalent to its counterpart in the usual physical variables $(R, u, b)$, \tsl{i.e.}
\begin{equation}\label{eq:EquivUB}
\begin{cases}
\partial_t R + \D (R u) = 0 \\[1ex]
\partial_t u + \mathbb{P} \Big( \D \big( u \otimes u - b \otimes b \big) + R \mathfrak{C}u \Big) = 0 \\[1ex]
\partial_t b + \D \big( u \otimes b - b \otimes u \big) = 0\,,
\end{cases}
\end{equation}
is easy, as it simply relies on algebraic manipulations (notice that the magnetic field equation remains unchanged by the action of the Leray projector $\mathbb{P}$).

However, it is not obvious that weak solutions of the quasi-homogeneous ideal MHD equations \eqref{i_eq:MHD-I} necessarily solve \eqref{eq:EquivUB}, even if they are regular, just as it is unclear that all weak solutions of \eqref{eq:MHDab} are also solutions of \eqref{eq:EquivEq}.

The chief reason for this is that we cannot apply the Leray projector to \eqref{i_eq:MHD-I} or \eqref{eq:MHDab} without first ascertaining whether all terms are well defined. For instance, if we apply
the Leray projector to the momentum equation in \eqref{i_eq:MHD-I}, one must first check that the images $\mathbb{P} \, ( \nabla \pi )$ and $\mathbb{P} \, ( \partial_t u )$ have any
sense. As a matter of fact, it is shown in \cite{MY} (see equations (2.50) to (2.53) therein) that if $u$ and $b$ are regular and $L^\infty$,
then $\mathbb{P} \D (u \otimes u - b \otimes b)$ can always be defined.
On the contrary, this does not apply to the two functions $\partial_t u$ and $\nabla \pi$. 
In the $L^p$ framework (with $1 < p < +\infty$), it is, in fact, an \textsl{a posteriori} consequence of Theorems \ref{th:symm} and \ref{t:equiv-P} that both functions $\nabla \pi$ and $\partial_t u$ have enough integrability (\tsl{i.e.} no polynomial part) for the Leray projector to be applied to the momentum equation.

We have proven the equivalence of the three systems \eqref{i_eq:MHD-I}, \eqref{eq:MHDab} and \eqref{eq:EquivEq} for weak solutions that are small enough (in some sense)
at infinity. 
More precisely, the equivalence is proven under some \emph{global integrability} assumptions, which,
roughly speaking, 
act as a boundary condition for systems \eqref{i_eq:MHD-I} and \eqref{eq:MHDab}: the fluid is assumed to be at rest at infinity.
Such a boundary condition is instead implicit in both systems \eqref{eq:EquivEq} and \eqref{eq:EquivUB}, because the inverted Laplace operator $(- \Delta)^{-1}$ has its range
in the space of functions which have no harmonic part.

\subsubsection{Counterexamples to the equivalence}
Here, we discuss the the sharpness of the assumptions of Theorems \ref{th:symm} and \ref{t:equiv-P}. We will construct two counterexamples,
which prove the \emph{failure} of the equivalence between \eqref{i_eq:MHD-I} and \eqref{eq:EquivUB} the first one, between \eqref{i_eq:MHD-I} and \eqref{eq:MHDab} the second one,
in absence of any integrability condition on the solutions.

\subsubsection*{Failure of the equivalence between systems \eqref{i_eq:MHD-I} and \eqref{eq:EquivUB}}
For our first counterexample, we work in space dimension $d = 2$. Define the uniform flow by
\begin{equation}\label{eq:UnifFlow}
u(t, x) = \big( f(t), 0 \big)\,,\qquad\qquad \text{ with }\quad R = b = 0\,,
\end{equation}
where $f \in C^\infty(\mathbb{R})$ is a non-constant function of time. Observe that $\div(u\otimes u)=0$.
Then, $u$ thus defined solves the Euler equations \eqref{i_eq:MHD-I} with the associated pressure given by 
\begin{equation*}
\pi (t, x) = - f'(t) x_1 + C\,,
\end{equation*}
for some constant $C\in\R$.
By contrast, $u$ does \emph{not} solve system \eqref{eq:EquivUB}, because
\begin{equation}\label{eq:lastLabel}
\partial_t u + \mathbb{P} \, \D (u \otimes u)\, =\, \big(f'(t),0\big)\, \neq\, 0\,.
\end{equation}
Thus, regular bounded functions do not solve equivalently \eqref{i_eq:MHD-I} and \eqref{eq:EquivUB}, so these systems are not equivalent in a purely $L^\infty$ framework.
We refer to Paragraph \ref{sss:P} below for some additional details on this issue.

Note that this solution we give is not simply an artefact of the galilean invariance of the equations, as the uniform flow \eqref{eq:UnifFlow} is not always at rest in a given inertial reference frame. 
On the other hand, system \eqref{eq:EquivUB} naturally preserves certain boundary properties: if the fluid is initially at rest at infinity in some inertial reference frame,
then \eqref{eq:EquivUB} implies that it will always be so {in the same reference frame}. 


\medbreak
As a final comment, we remark that $f(t)$ need not be smooth for the uniform flow \eqref{eq:UnifFlow} to solve the ideal MHD problem \eqref{i_eq:MHD-I}.
By taking $f(t) = \mathds{1}_{[t_0,+\infty[}(t)$, for some $t_0>0$, we see that \eqref{i_eq:MHD-I} possesses bounded weak solutions that are \emph{not continuous} with respect to the time variable,
even in the $\mc D'$ topology.

\subsubsection*{Failure of the equivalence between systems \eqref{i_eq:MHD-I} and \eqref{eq:MHDab}}

In the same spirit as \eqref{eq:UnifFlow}, we define a solution of the Els\"asser system \eqref{eq:MHDab} by setting, in two dimensions of space $d = 2$,
\begin{equation*} 
\al(t, x) = \big(f(t), 0\big) = - \bt(t, x) \,, \qquad \qquad \text{ with } \quad R = 0\,.
\end{equation*}
This solution is associated to the following ``pressure'' functions:
\begin{equation*}
\pi_1(t, x) = - f'(t) x_1 + C \qquad\qquad \mbox{ and }\qquad\qquad  \pi_2(t,x) = - \pi_1 (t, x)\,.
\end{equation*}
However, when trying to recover the usual ``physical'' variables, we see that $u = (\al + \bt)/2 = 0$, whereas $b = (\al - \bt)/2 = \big(f(t), 0\big)$
does not solve the magnetic field equation in \eqref{i_eq:MHD-I}, since
\begin{equation*}
\partial_t b + (u \cdot \nabla)b - (b \cdot \nabla)u = \big(f'(t), 0\big) \neq 0\,.
\end{equation*}

This simple example shows that, for the same reasons invoked above, systems \eqref{i_eq:MHD-I} and \eqref{eq:MHDab} are not, \textsl{per se}, equivalent.
Some kind of boundary condition is needed alongside. Notice that, according to the assumptions of Theorem \ref{th:symm}, the velocity field $u$ constructed in this example is smooth and integrable,
whereas no integrability property is available for $b$, which is only bounded.

\subsubsection{Lack of uniqueness in a purely $L^\infty$ framework}

The two counterexamples constructed above highlight in fact a \emph{lack of uniqueness} of solutions to systems \eqref{i_eq:MHD-I} and \eqref{eq:MHDab} respectively,
in a purely $L^\infty$ framework. 


As a matter of fact, we know from \cite{MY} that (for $R\equiv0$) system \eqref{eq:EquivUB} is well-posed in $B^1_{\infty,1}$. Solving with the initial datum
$\wtilde u_0=u(0)=\big(f(0),0\big)$, $\wtilde b_0=0$ and $\wtilde R_0=0$, where $u$ is the uniform flow defined in \eqref{eq:UnifFlow}, we find a unique (constant) solution 
$\big(\wtilde u, \wtilde b, 0\big)$ to system \eqref{eq:EquivUB} in $L^\infty_T(B^{1}_{\infty,1})$, which then also solves \eqref{i_eq:MHD-I}, for a suitable pressure $\nabla\Pi$.
On the other hand, the triplet $(u,b,R)$ we have exhibited in our first example is another solution to the original problem \eqref{i_eq:MHD-I}, belonging to the same functional class
and related to the same initial datum\footnote{In other words, the uniqueness result of \cite{MY} for system \eqref{eq:EquivUB} in the $B^1_{\infty,1}$ functional framework does \emph{not} imply uniqueness 
for the original MHD system \eqref{i_eq:MHD-I} in that class, without any additional condition.}.

Observe that the issue is not regularity. Propagating higher order norms\footnote{Technically, this is not done in \cite{MY}, but it follows from the same analysis.} and repeating the argument above,
we can conclude that the quasi-homogeneous ideal MHD system \eqref{i_eq:MHD-I} is ill-posed in $C^\infty_b\,:=\,\bigcap_{m\in\N}C^m_b$,
with $C^m_b\,:=\,C^m\cap W^{m,\infty}$. Focusing on the case when $R\equiv0$,
we infer as well that the classical ideal MHD equations are ill-posed in the $C^\infty_b$ setting.

\medbreak
Even more simply, we may well choose our solution \eqref{eq:UnifFlow} so that $f(t)$ is compactly supported away from $t=0$, producing in this way another solution (apart from the zero one) starting with
zero initial datum.
Of course, the same argument applies also to our second counterexample. Hence we can conclude that the Els\"asser system \eqref{eq:MHDab} does not behave better
than the original MHD problem \eqref{i_eq:MHD-I} regarding well-posedness, inasmuch as it exhibits the same lack of uniqueness in a purely $L^\infty$ framework.

\subsubsection{Final remarks on the Leray projector} \label{sss:P}

We conclude this part with a few additional remarks on the Leray projector $\P$.

At first sight, it might seem rather suprising that applying $\P$ to the momentum equation in the usual MHD system \eqref{i_eq:MHD-I} does not yield equations \eqref{eq:EquivUB},
especially considering that, for any divergence-free function $w=w(x)$, we may \emph{formally} write
\begin{equation*}
\mathbb{P} w = w + \big[\nabla (- \Delta)^{-1}\big] \D (w) = w,
\end{equation*}
so that the image $\mathbb{P} w$ is (at least formally) well defined.

The fact is that several definitions of the Leray projector coexist (as a Fourier multiplier, or as a composition of operators as in the previous relation). Their equivalence 
must not be taken for granted, as it depends on the considered functional framework.
In particular, the Leray projector is an unbounded operator on spaces of bounded functions (such as $L^\infty$, $C^\infty_b$ or $B^1_{\infty, 1}$, for instance), 
on which it is \emph{not} densely defined. Therefore, its domain $\mf D (\mathbb{P})$ may depend on the precise definition of $\mathbb{P}$ we rely on.

To illustrate this point, let us make an example. Consider \tsl{e.g.} a constant function $w(x)\equiv\oline{w}\in\R^d$. Then $\P w$ has no meaning if we define $\P$ as a
Fourier multiplier (recall relation \eqref{eq:LerayFM} above),
because $\what w(\xi)=\oline{w}\,\de_0(\xi)$, while the symbol of $\P$ is not well-defined at $\xi=0$. On the other hand, if we define the projector as a composition of operators
by formula \eqref{eq:P-op}, with $(-\Delta)^{-1}$ being understood as (say) a convolution operator, and we perform computations precisely \emph{in that order} (in fact,
$\div$ and $(-\Delta)^{-1}$ do not commute anymore, in this case, because their domains and ranges are not compatible), then $\div w=0$, so $\P w$ makes sense, and we have $\P w=w$.

\medskip
A similar ambiguity of definition occurs also when trying to compute $\P(\nabla \theta)$, with $\theta$ being a harmonic function.
For instance, take the uniform flow \eqref{eq:UnifFlow}, which solves the Euler equations. Then, if we apply definition \eqref{eq:P-op} of $\P$, performing operations
in that precise order, we may formally compute the images $\mathbb{P}(\partial_t u)$ and $\mathbb{P}(\nabla \pi)$. In particular, since $\nabla\pi=\big(-f'(t),0\big)$, we have $\div \nabla\pi=0$,
which implies
\begin{equation}\label{eq:lastLabel2}
\mathbb{P}(\nabla \pi)\,=\,\nabla\pi\, =\, -\big(f'(t), 0\big)\neq 0\,.
\end{equation}
Note that this is exactly the missing term for obtaining the equality in \eqref{eq:lastLabel}. More precisely, by taking \eqref{eq:lastLabel2} into account, we see that
$$
\mathbb{P} \Big( \partial_t u + \D(u \otimes u) \Big)\,+\,\mathbb{P} ( \nabla \pi)\,=\,0\,,
$$
thus the uniform flow \eqref{eq:UnifFlow}, solution of the Euler equation \eqref{i_eq:MHD-I}, solves also the projected counterpart.
In particular, equations \eqref{eq:EquivUB} are not always the Leray projection of the ideal MHD problem \eqref{i_eq:MHD-I}: they are missing the $\P (\nabla \pi)$ term, which may be, as seen above,
non-zero.


\section{Well-posedness of the quasi-homogeneous ideal MHD system} \label{WP}

This section is devoted to the proof of Theorems \ref{th:BesovWP} and \ref{th:cont-crit}. In the first subsection, we prove uniqueness of solutions in the considered
functional framework.
In Subsection \ref{ss:existence} we prove existence of solutions, and exhibit a first lower bound (valid in any space dimension) for the lifespan of the solutions.
Finally, in Subsection \ref{ss:proof-cc} we show the proof of the continuation criterion.

\subsection{Uniqueness by an energy method} \label{ss:unique}

In this section, we focus on the uniqueness of solutions. As we have explained in Subsection \ref{ss:integrab}, a sufficient condition
for the quasi-homogeneous ideal MHD system \eqref{i_eq:MHD-I} to be well-posed consists in requiring the solutions to have some integrability property at infinity, whereas there is no hope of uniqueness for solutions that are solely bounded.
In our framework, this is guaranteed by the finite-energy condition on the initial data.

Uniqueness in our functional framework is a straightforward consequence of the following stability result, whose proof is based on an energy method.

\begin{thm} \label{th:w-s}
Let $(R_1, u_1, b_1)$ and $(R_2,u_2,b_2)$ be two solutions\footnote{To fix ideas, say weak solutions in the sense of Definition \ref{d:MHDIdeal}.}
to the quasi-homogeneous ideal MHD system \eqref{i_eq:MHD-I}. Assume that, for some $T>0$, one has the following properties:
\begin{enumerate}[(i)]
\item the three quantities $\de R\,:=\,R_1-R_2$, $\de u\,:=\,u_1-u_2$ and $\de b\,:=\,b_1-b_2$ all belong to the space $C^1\big([0,T];L^2(\R^d)\big)$;
\item $u_1 \in L^1\big([0,T];W^{1, \infty}(\R^d)\big)$ and $\nabla R_1, \nabla b_1 \in L^1\big([0,T];L^\infty(\R^d)\big)$;
\item $R_2\in L^1\big([0,T];L^{\infty}(\R^d)\big)$.
\end{enumerate}

Then, for all $t\in[0,T]$, we have the stability inequality 
\[
\big\| (\delta R, \delta u, \delta b)(t) \big\|_{L^2}\,\leq\,C\,\big\| (\delta R, \delta u, \delta b)(0) \big\|_{L^2}\,e^{CA(t)}\,, 
\]
for a universal constant $C>0$, where we have defined
$$
A(t)\,:=\,\int_0^t \Big\{ \| u_1(\t) \|_{W^{1, \infty}} + \| \nabla b_1(\t) \|_{L^\infty} + \| {R}_2(\t) \|_{L^\infty} + \| \nabla R_1(\t) \|_{L^\infty} \Big\} {\rm d}\t\,.
$$
\end{thm}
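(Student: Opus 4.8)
The plan is to derive evolution equations for the differences $\delta R$, $\delta u$, $\delta b$ and to run a Gr\"onwall argument on the quantity $\|(\delta R,\delta u,\delta b)(t)\|_{L^2}^2$. Subtracting the two copies of system \eqref{i_eq:MHD-I}, one obtains equations of the schematic form
\begin{align*}
\partial_t \delta R + \D(R_1 u_1 - R_2 u_2) &= 0\,,\\
\partial_t \delta u + \D(u_1\otimes u_1 - u_2 \otimes u_2) - \D(b_1\otimes b_1 - b_2\otimes b_2) + \delta(R\mathfrak{C}u) + \nabla\delta\pi &= 0\,,\\
\partial_t \delta b + \D(u_1\otimes b_1 - u_2\otimes b_2) - \D(b_1\otimes u_1 - b_2\otimes u_2) &= 0\,,
\end{align*}
where $\delta\pi$ collects the difference of the MHD pressures. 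The standard trick is to rewrite each bilinear difference as a sum of terms linear in the differences, e.g. $R_1 u_1 - R_2 u_2 = \delta R\, u_1 + R_2\, \delta u$, and similarly for the tensor products, so that every nonlinearity is split into a ``transport'' piece (the old solution acting on a difference) and a ``source'' piece (a difference multiplied by a coefficient from one of the solutions).

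First I would test the $\delta u$ and $\delta b$ equations against $\delta u$ and $\delta b$ respectively in $L^2$, and the $\delta R$ equation against $\delta R$. The crucial structural points are: the pressure term drops out because $\int \nabla\delta\pi\cdot\delta u\,\dx = 0$ thanks to $\div(\delta u)=0$ (here the finite-energy/$L^2$ framework is what makes this integration by parts legitimate, not merely formal); the pure transport terms of the form $\int (u_1\cdot\nabla)\delta u\cdot\delta u\,\dx$ and $\int(u_1\cdot\nabla)\delta b\cdot\delta b\,\dx$ vanish since $\div(u_1)=0$; and the symmetric magnetic exchange terms $\int (b_1\cdot\nabla)\delta b\cdot\delta u + (b_1\cdot\nabla)\delta u\cdot\delta b\,\dx$ should combine and integrate by parts to a harmless expression controlled by $\|\nabla b_1\|_{L^\infty}$. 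What remains are the source terms, each of which I would estimate by H\"older's inequality to be bounded by one of the coefficient norms appearing in $A(t)$ times $\|(\delta R,\delta u,\delta b)\|_{L^2}^2$. For instance, $\int \delta R\,(u_1\cdot\nabla)\,(\cdot)$-type terms and the terms carrying $\delta u \cdot \nabla b_1$ are controlled by $\|u_1\|_{W^{1,\infty}}$ and $\|\nabla b_1\|_{L^\infty}$; the coupling term $\delta(R\mathfrak{C}u)=\delta R\,\mathfrak{C}u_1 + R_2\,\mathfrak{C}\delta u$ contributes $\|R_2\|_{L^\infty}$ and a factor tied to $u_1$; and the density source terms involve $\|\nabla R_1\|_{L^\infty}$. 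Collecting everything, one arrives at the differential inequality
\[
\frac{\rm d}{\dt}\big\|(\delta R,\delta u,\delta b)(t)\big\|_{L^2}^2 \;\leq\; C\,A'(t)\,\big\|(\delta R,\delta u,\delta b)(t)\big\|_{L^2}^2\,,
\]
and Gr\"onwall's lemma gives the claimed bound with the exponential $e^{CA(t)}$.

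The main obstacle, and the reason the regularity assumptions (ii)--(iii) are asymmetric between the two solutions, is handling the density difference $\delta R$ and the terms it generates. Because $R$ is only transported (no smoothing) and is not assumed integrable, one must place all derivatives on the ``good'' solution: the term $\D(\delta R\, u_1)$ tested against $\delta R$ must be integrated by parts to land the derivative on $u_1$, using $\div(u_1)=0$ to kill the top-order piece and leaving a term bounded by $\|\nabla R_1\|_{L^\infty}\|\delta u\|_{L^2}\|\delta R\|_{L^2}$ after the roles are suitably distributed; this is exactly why $\|\nabla R_1\|_{L^\infty}$ (and not $\|\nabla R_2\|_{L^\infty}$) appears in $A(t)$. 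I would take care to always differentiate $R_1$, $u_1$, $b_1$ rather than the second solution, so that only the lower-regularity quantities $R_2$ (merely $L^\infty$) and the differences (merely $L^2$) enter undifferentiated. Once the bookkeeping of which solution absorbs each derivative is done consistently, the estimates are routine H\"older applications; the delicate part is purely organizational, ensuring every term closes against $A(t)$ as defined.
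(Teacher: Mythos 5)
Your strategy --- an $L^2$ energy estimate on the differences, with every derivative thrown onto the first solution so that only $\|\nabla u_1\|_{L^\infty}$, $\|\nabla b_1\|_{L^\infty}$, $\|\nabla R_1\|_{L^\infty}$, $\|R_2\|_{L^\infty}$ appear, followed by Gr\"onwall --- is the same as the paper's and closes against the same quantity $A(t)$. The one genuine difference is the choice of variables: the paper does not estimate $(\de R,\de u,\de b)$ directly but passes to the Els\"asser differences $\de\al=\al_1-\al_2$, $\de\bt=\bt_1-\bt_2$, which solve $\partial_t\de\al+(\bt_2\cdot\nabla)\de\al+(\de\bt\cdot\nabla)\al_1+\dots=0$ and its mirror image. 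The paper states explicitly why: to ``avoid unpleasant derivatives on those differences''. In the symmetrised form, every derivative of a difference sits in a self-transport term $\int(v\cdot\nabla)f\cdot f\,\dx$ with $\div v=0$, which vanishes by a routine regularisation even though $f$ is only $C^1_tL^2_x$. In your formulation the magnetic exchange terms do \emph{not} vanish individually: you must pair $\int(b_2\cdot\nabla)\de b\cdot\de u\,\dx$ from the momentum identity with $\int(b_2\cdot\nabla)\de u\cdot\de b\,\dx$ from the magnetic one to obtain $\int b_2\cdot\nabla(\de u\cdot\de b)\,\dx=0$, and each summand separately involves $\nabla\de u$ or $\nabla\de b$, which hypothesis (i) does not place in $L^2$; making this cancellation rigorous requires the same mollification/commutator argument applied to the product $\de u\cdot\de b$. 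So your route works but carries an extra technical step that the Els\"asser symmetrisation removes for free. Two bookkeeping corrections: since only the first solution may be differentiated, the undifferentiated coefficient in the transport and exchange terms is necessarily the \emph{second} solution's field, so the exchange terms carry $b_2$, not $b_1$ as you wrote (with $b_1$ the sources would involve $\nabla u_2,\nabla b_2$, which are not assumed bounded); and in the density equation the piece $\D(\de R\,u_1)$ tested against $\de R$ vanishes outright, while the $\|\nabla R_1\|_{L^\infty}\|\de u\|_{L^2}\|\de R\|_{L^2}$ contribution comes from the companion piece $\D(R_1\,\de u)=\de u\cdot\nabla R_1$ --- and to make the advection term a pure transport by a divergence-free field one should split $R_1u_1-R_2u_2=R_1\,\de u+\de R\,u_2$, i.e.\ transport by $u_2$.
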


\begin{rmk}
In the case where the matrix $\mathfrak{C}$ is skew-symmetric, we may dispense with the norm $\| {R}_2 \|_{L^\infty}$ in the definition of $A$, 
as well as in assumption \textit{(iii)} of the statement.

If $\mathfrak{C} = 0$, then we may also replace $\| u_1 \|_{W^{1, \infty}}$ by $\| \nabla u_1 \|_{L^\infty}$.
\end{rmk}

\begin{rmk} \label{r:w-s}
Employing similar arguments as the ones used in \cite{Cobb-F_Rig} (see the proof to Theorem 4.3 therein), it would be enough to assume $C^0_T(L^2)$ regularity for
$\de R$, $\de \al$ and $\de\bt$.
In that case, the previous theorem would become a full-fledged weak-strong uniqueness result.

For the sake of simplicity, we do not pursue that issue here, and we assume that $\de R$, $\de \al$ and $\de\bt$ belong to $C^1_T(L^2)$.
\end{rmk}

\begin{proof}
The claimed bound is simply based on energy estimates for the difference of the two solutions. System \eqref{i_eq:MHD-I} is symmetric, so one could implement that strategy directly
on the $(R,u,b)$-formulation of the equations. However, in order to avoid unpleasant derivatives on those differences (which we do not know to be smooth enough),
it is better to work in Els\"asser variables.

Therefore, with obvious notations, let us introduce the Els\"asser variables $(R_1, \al_1, \bt_1)$ and $(R_2,{\al}_2, {\bt}_2)$,
which solve system \eqref{eq:MHDab}. This is possible thanks to item (i) in Theorem \ref{th:symm}; notice that this step is based only on algebraic manipulations of the equations, and requires
no special integrability conditions. Set
$$
\de\al\,:=\,\al_1\,-\,\al_2\qquad\qquad\mbox{ and }\qquad\qquad \de\bt\,:=\,\bt_1\,-\,\bt_2\,.
$$
We take the difference of the two systems solved by $(R_1, \al_1, \bt_1)$ and $(R_2, {\al}_2, \bt_2)$ to obtain
\begin{equation} \label{eq:syst-diff}
\begin{cases}
\partial_t (\delta R) + \dfrac{1}{2} (\al_2 + \bt_2) \cdot \nabla \delta R = - \dfrac{1}{2} \big( \delta \al + \delta \bt \big) \cdot \nabla R_1 \\[1ex]
\partial_t (\delta \al) + ({\bt}_2 \cdot \nabla ) \delta \al + (\delta \bt \cdot \nabla ) \al_1 + \dfrac{1}{2} R_2 \mathfrak{C}\big( \delta \al + \delta \bt \big) +
\dfrac{1}{2} \delta R \mathfrak{C} \big( {\al}_1 + {\bt}_1 \big) + \nabla \de\pi_1 = 0 \\[1ex]
\partial_t (\delta \bt) + ({\al}_2 \cdot \nabla ) \delta \bt + (\delta \al \cdot \nabla ) \bt_1 + \dfrac{1}{2}  R_2 \mathfrak{C}\big( \delta \al + \delta \bt \big) +
\dfrac{1}{2} \delta R \mathfrak{C} \big( {\al}_1 + {\bt}_1 \big) + \nabla \de\pi_2 = 0\\[1ex]
\D(\delta \al)\, =\, \D (\delta \bt)\, =\, 0\,,
\end{cases}
\end{equation}
where we have denoted $\de \pi_1$ and $\de\pi_2$ the difference of the two pressure terms appearing in system \eqref{eq:MHDab} and related to the triplets $(R_1,\al_1,\bt_1)$ and 
$(R_2,\al_2,\bt_2)$.

We start by testing the first equation against $\de R$: we gather
\begin{equation*}
\frac{1}{2} \frac{\rm d}{\dt} \left\|\delta R\right\|_{L^2}^2 = - \frac{1}{2} \int (\delta \al + \delta \bt) \cdot \nabla R_1\,\delta R  \dx\, \leq\,
\|\nabla R_1 \|_{L^\infty} \big\| (\delta R, \delta \al, \delta \bt) \big\|_{L^2}^2\,.
\end{equation*}

Next, testing the second equation on $\delta \al$, owing to the divergence-free conditions on $\de\al$ and $\de\bt$, we obtain
\begin{equation*}
\frac{1}{2} \frac{\rm d}{\dt}\left\| \delta \al \right\|_{L^2}^2 = - \int (\delta \bt \cdot \nabla ) \al_1 \cdot \delta \al \dx -
\frac{1}{2} \int {R}_2 \mathfrak{C} \big( \delta \al + \delta \bt \big)\cdot\de\al \dx - \int \delta R \mathfrak{C} (\al_1 + \bt_1) \cdot \delta \al \dx\,.
\end{equation*}
Bounding the three integrals on the right-hand side of the previous equality is fairly easy: after using the Cauchy-Schwarz and Young inequalities, we get
\begin{equation*}
\frac{1}{2} \frac{\rm d}{\dt}\left\| \delta \al \right\|_{L^2}^2\,\leq\,C\,
\Big( \| \nabla \al_1 \|_{L^\infty} + \| {R}_2 \|_{L^\infty} + \| \al_1 + \bt_1 \|_{L^\infty} \Big)\, \big\| (\delta R, \delta \al, \delta \bt) \big\|_{L^2}^2\,,
\end{equation*}
for a universal constant $C>0$ depending only on the coefficients of $\mf C$.

Performing, \textsl{mutatis mutandi}, the same computations with the second equation, we find an analogous inequality:
\begin{equation*}
\frac{1}{2} \frac{\rm d}{\dt}\left\| \delta \bt \right\|_{L^2}^2\,\leq\,C\,
\Big( \| \nabla \bt_1 \|_{L^\infty} + \| {R}_2 \|_{L^\infty} + \| \al_1 + \bt_1 \|_{L^\infty} \Big)\, \big\| (\delta R, \delta \al, \delta \bt) \big\|_{L^2}^2\,.
\end{equation*}

Putting all the three inequalities together, we finally deduce
\begin{equation*}
\frac{1}{2} \frac{\rm d}{\dt} \big\| ( \delta R, \delta \al, \delta \bt ) \big\|_{L^2}^2\,\leq\,C\,\Big\{ \| u_1 \|_{W^{1, \infty}} + \| \nabla b_1 \|_{L^\infty} +
\| {R}_2 \|_{L^\infty} + \| \nabla R_1 \|_{L^\infty} \Big\}\, \big\| ( \delta R, \delta \al, \delta \bt ) \big\|_{L^2}^2\,.
\end{equation*}
An application of Gr\"onwall's lemma ends the proof.
\end{proof}

From the previous result, it is possible to deduce uniqueness of solutions in the considered functional framework.
\begin{proof}[Proof of uniqueness in Theorem \ref{th:BesovWP}]
Let us consider an initial datum $(R_0,u_0,b_0)$ satisfying the assumptions of Theorem \ref{th:BesovWP}. Let $(R_1,u_1,b_1)$ and $(R_2,u_2,b_2)$ be two solutions to system \eqref{i_eq:MHD-I}
related to that initial datum, and fulfilling the conditions stated in the same theorem.

It is not hard to see that all the assumptions made in Theorem \ref{th:w-s} are matched by those solutions. The only point which deserves some explanation is Condition (i):
let us give some details. We focus only on the regularity of the quantity $\de R$, the proof being similar for $\delta u$ and $\delta b$.

First of all, we notice that, under our hypotheses, the equivalence of systems \eqref{i_eq:MHD-I} and \eqref{eq:MHDab} is fully justified by Theorem \ref{th:symm}.
So, we can pass to Els\"asser variables and recover again that the quantities $(\de R,\de\al,\de\bt)$, defined as in the previous proof, satisfy system \eqref{eq:syst-diff}.

Let us focus on the first equation of \eqref{eq:syst-diff}: $\de R$ takes the value $\de R_{|t=0}=0$ at initial time and is transported by a divergence-free vector field, under the action
of the ``external force'' $f\,:=\,-(\de\al+\de\bt)\cdot\nabla R_1/2$. By the regularity properties stated in Theorem \ref{th:BesovWP} and embeddings, we know that
$$
u\,,\,b\;\in\,C^0\big([0,T];L^2\big)\qquad\qquad\mbox{ and }\qquad\qquad R_1\,\in\,C^0\big([0,T];W^{1,\infty}\big)\,.
$$
From this, we infer $f\,\in\,C^0\big([0,T];L^2\big)$. Therefore, by transport we get $\de R\in C^0\big([0,T];L^2\big)$. By the same token, we also see that $\d_t\de R$ belongs to
the same space, so finally $\de R\in C^1\big([0,T];L^2\big)$, as claimed.

The needed regularities of $\de u$ and $\de b$ follow from a similar argument, using again the equations in \eqref{eq:syst-diff} and the Leray projector. In the end, we can apply Theorem \ref{th:w-s}
to deduce that $\big\|(\de R,\de u,\de b)\big\|_{L^\infty_T(L^2)}=0$. By both time and space continuity, we infer that $\big(R_1,u_1,b_1\big)(t,x)=\big(R_2,u_2,b_2\big)(t,x)$
for \emph{all} $(t,x)\in[0,T]\times\R^d$. This means exactly the sought uniqueness.
\end{proof}

\subsection{Existence of solutions}\label{ss:existence}

In this subsection, we show existence of solutions. For this, we implement a nowadays classical scheme. First of all, in Paragraph \ref{sss:bounds} we will show \tsl{a priori} estimates
for smooth solutions in the relevant norms. From those estimates, we will also deduce a first lower bound (valid in any space dimension) on the lifespan of the solutions.
After that, in Paragraph \ref{sss:approx} we will give the explicit construction of smooth solutions to approximate problems,
and show their convergence to a ``true'' solution of the original equations.

\begin{rmk} \label{r:Els}
As a consequence of Theorem \ref{th:symm}, the two systems \eqref{i_eq:MHD-I} and \eqref{eq:MHDab} are equivalent in the functional framework considered in Theorem \ref{th:BesovWP}.
Hence, throughout we will use the two formulations equivalently, depending on which one is more convenient for our scopes.
\end{rmk}

\subsubsection{\tsl{A priori} estimates} \label{sss:bounds}

In order to avoid the use of the Leray projector $\P$ (broadly employed in the analysis of \cite{Cobb-F_Rig}), the basic idea for deriving \tsl{a priori} estimates in spaces $\B$
is to resort to the \emph{vorticity formulation} of the Els\"asser system \eqref{eq:MHDab}.

For want of better notations, we call $X$ and $Y$ the vorticity matrices, defined according to \eqref{def:vort}: more precisely,
\begin{equation*}
\big[ \curl(\al)\big]_{ij} = X_{ij} = \partial_j \al_i - \partial_i \al_j \qquad\quad \text{ and } \quad\qquad \big[ \curl(\bt)\big]_{ij} = Y_{ij} = \partial_j \bt_i - \partial_i \bt_j\,.
\end{equation*}
In dimension $d = 2$, the $\curl$ can be identified with the scalar function $X = \partial_1 \al_2 - \partial_2 \al_1$, and, when $d = 3$, with the vector field
$X = \nabla \times \al$. However, we work in any dimension $d \geq 2$ of space.

Applying the $\curl$ to the second and third equations of system \eqref{eq:MHDab}, we get
\begin{equation}\label{eq:MHDVorticity}
\begin{cases}
\partial_t  R + \dfrac{1}{2}(\al + \bt) \cdot \nabla R = 0 \\[1ex]
\partial_t X + (\bt \cdot \nabla) X  = \L \big( \nabla \al, \nabla \bt \big) - \dfrac{1}{2} \curl \big( R \mathfrak{C} (\alpha + \bt) \big)\\[1ex]
\partial_t Y + (\al \cdot \nabla) Y = \L \big( \nabla \bt, \nabla \al \big) - \dfrac{1}{2} \curl \big( R \mathfrak{C} (\alpha + \bt) \big)\,,
\end{cases}
\end{equation}
where $\mc L$ denotes the bilinear matrix-valued operator defined by
\begin{equation}\label{eq:LOperator}
\big[ \L(\nabla \al, \nabla \bt) \big]_{ij}\, =\, \sum_{k=1}^d\left(\partial_j \bt_k\, \partial_k \al_i \,-\, \partial_i \bt_k\, \partial_k \al_j\right)\,,
\end{equation}
or, in other words, $\L (\nabla \al, \nabla \bt) = {}^{t} (\nabla \bt \, \nabla \al) - \nabla \bt \, \nabla \al$. 

The main result of this part is stated in the next result, which contains basic \tsl{a priori} bounds for smooth solutions to system \eqref{eq:MHDab}.

\begin{prop} \label{p:a-priori}
Let $(s, r)$ be such that the Lipschitz condition \eqref{i_eq:Lip} is satisfied.
Let $(R,\al, \bt)$ be regular solutions to the symmetrised system \eqref{eq:MHDab}, related to regular initial data $(R_0,\al_0, \bt_0)$, with $\al_0$ and $\bt_0$ being divergence-free.

Then, there exist a constant $C>0$, which depends on the dimension $d$ and $(s, r)$, as well as a time $T^*>0$, which depends on the above and $\big\|(R_0,\al_0, \bt_0)\big\|_{\B}$, such that
\begin{equation*}
\big\|R(t)\big\|_{\B}\,+\, \big\|\big(\al(t), \bt(t)\big)\big\|_{B^{s}_{\infty, r}\cap L^2} \,\leq\, C\,
e^{C\,T^*\,\|R_0\|_{L^\infty}}\,\Big(\big\|R_0\big\|_{\B}\,+\,\big\|\big(\al_0, \bt_0\big)\big\|_{B^{s}_{\infty, r}\cap L^2}\Big)
\end{equation*}
for all $t\in[0,T^*]$.
Moreover, we have the inequality
\begin{equation*} 
T^*\, \geq\, \frac{C}{\|R_0\|_{L^\infty}}\; {\rm argsinh}\! \left(  \frac{ C\, \|R_0\|_{L^\infty}}{\big\|R_0\big\|_{\B}\,+\,\big\|\big(\al_0, \bt_0\big)\big\|_{B^{s}_{\infty, r}\cap L^2}}  \right)\,.
\end{equation*}
\end{prop}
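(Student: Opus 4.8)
The plan is to run an energy--transport scheme on the Els\"asser \emph{vorticity} formulation \eqref{eq:MHDVorticity}, which avoids the Leray projector $\P$ (ill-behaved for $p=+\infty$). I would control the solution through three pieces. First, the transported scalar $R$, whose $L^\infty$ norm is conserved: $\|R(t)\|_{L^\infty}=\|R_0\|_{L^\infty}$. Second, the energy part, governed by \eqref{eq:BasicEN}, which (after passing to Els\"asser variables) gives $\|(\al,\bt)(t)\|_{L^2}\lesssim \|(\al_0,\bt_0)\|_{L^2}\,e^{c\|R_0\|_{L^\infty}t}$. Third, the high-frequency Besov part, measured by $\|R(t)\|_{\B}$ together with the curls $X=\curl\al$ and $Y=\curl\bt$ in $B^{s-1}_{\infty,r}$. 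These last two are reconciled by a Biot--Savart reconstruction: since $\al,\bt$ are divergence-free, Bernstein's inequality (Lemma \ref{l:bern}) bounds the low block by the $L^2$ norm while $\nabla$ and $\curl$ are spectrally equivalent at high frequency, so that $\|\al\|_{\B}\lesssim \|\al\|_{L^2}+\|X\|_{B^{s-1}_{\infty,r}}$, and symmetrically for $\bt$.

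Next I would derive transport estimates for each equation of \eqref{eq:MHDVorticity}. For $R$, transported by $(\al+\bt)/2$, Theorem \ref{th:transport} yields $\frac{d}{dt}\|R\|_{\B}\lesssim \|(\al,\bt)\|_{\B}\,\|R\|_{\B}$. For $X$ and $Y$, which live one derivative lower, in $B^{s-1}_{\infty,r}$, the relevant tool is the second commutator bound \eqref{eq:lCommBLinfty} of Lemma \ref{l:CommBCD}, tailored exactly to that regularity. The forcing terms are treated by paradifferential calculus. The coupling $\curl(R\,\mf C(\al+\bt))$ is estimated in $B^{s-1}_{\infty,r}$ by $\|R\,\mf C(\al+\bt)\|_{\B}$ and then, since $\B$ is an algebra for $s\ge 1>0$ (Corollary \ref{c:tame}), by $\|R\|_{\B}\|(\al,\bt)\|_{L^\infty}+\|R_0\|_{L^\infty}\|(\al,\bt)\|_{\B}$, using again $\|R(t)\|_{L^\infty}=\|R_0\|_{L^\infty}$; crucially, this contribution is \emph{linear} in $\|R_0\|_{L^\infty}$. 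The bilinear term $\L(\nabla\al,\nabla\bt)$ is bounded in $B^{s-1}_{\infty,r}$ by the tame product estimate (Corollary \ref{c:tame}) when $s>1$, giving a contribution $\lesssim\|(\al,\bt)\|_{\B}^2$.

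Collecting these bounds with the reconstruction inequality, and setting $E(t):=\|R(t)\|_{\B}+\|(\al,\bt)(t)\|_{\B\cap L^2}$, I expect to close a Riccati-type inequality $\frac{d}{dt}E\le C\,E\,(E+\|R_0\|_{L^\infty})$, where the quadratic term gathers the genuinely nonlinear advection and $\L$ contributions and the linear term carries the $R\,\mf C$ coupling together with the exponential growth of the energy. Integrating this by separation of variables over $[0,T^*]$ yields the stated pointwise bound; since $E+\|R_0\|_{L^\infty}$ is comparable to $\sqrt{E^2+\|R_0\|_{L^\infty}^2}$ and $\int \mathrm{d}E/(E\sqrt{E^2+\|R_0\|_{L^\infty}^2})$ produces an $\mathrm{argsinh}$, one recovers exactly the announced lower bound for $T^*$. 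This explains the analytic origin of the $\mathrm{argsinh}$, tied to the exponential energy inequality as noted in Remark \ref{r:d-life}: when $\mf C$ is skew-symmetric or $R\equiv0$ the energy is conserved, the inequality degenerates to $\frac{d}{dt}E\le CE^2$, and one recovers the classical bound $T^*\gtrsim 1/E(0)$.

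The hard part will be the endpoint $s=r=1$, where $X,Y$ lie in $B^0_{\infty,1}$, which is \emph{not} an algebra (Remark \ref{r:tame}): the naive product estimate for $\L(\nabla\al,\nabla\bt)$ breaks down because the high--high (remainder) interaction of two $B^0_{\infty,1}$ functions lands only in $B^0_{\infty,\infty}$, losing the $\ell^1$ summability needed to close in $B^1_{\infty,1}$. To overcome this I would exploit the special structure $\L(\nabla\al,\nabla\bt)={}^{t}(\nabla\bt\,\nabla\al)-\nabla\bt\,\nabla\al$: the constraints $\D(\al)=\D(\bt)=0$ let one recast the offending products in divergence/commutator form, so that the dangerous remainder either cancels by antisymmetry or is absorbed into the transport operator, restoring $B^0_{\infty,1}$ control. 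Alternatively, one may establish the a priori bound first for $s>1$, where the tame estimates suffice, and then descend to $s=r=1$ a posteriori through the continuation criterion (Theorem \ref{th:cont-crit}) and the regularity-independence of the lifespan (Corollary \ref{c:reg}).
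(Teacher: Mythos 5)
Your proposal is correct and follows essentially the same route as the paper: vorticity formulation of the Els\"asser system, the splitting into $\|R\|_{L^\infty}$ conservation, the exponential $L^2$ energy bound and the Biot--Savart reconstruction $\|f\|_{\B}\lesssim\|f\|_{L^2}+\|\curl f\|_{B^{s-1}_{\infty,r}}$, commutator estimates via Lemma \ref{l:CommBCD}, and the rewriting of $\L(\nabla\al,\nabla\bt)$ in divergence form using $\div\al=\div\bt=0$ to handle the endpoint $B^0_{\infty,1}$ (this is exactly Lemma \ref{l:CommBPInfinite}); your concluding separation-of-variables argument on the Riccati inequality is a cosmetic variant of the paper's bootstrap on the integral inequality $E(t)\lesssim E(0)e^{ct\|R_0\|_{L^\infty}}+\int_0^tE^2$, and both yield the ${\rm argsinh}$ bound up to constants. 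One caveat: your fallback for the endpoint --- proving the estimate for $s>1$ and descending via Corollary \ref{c:reg} --- is not viable, since for data merely in $B^1_{\infty,1}\cap L^2$ there is no higher norm to propagate and Corollary \ref{c:reg} itself rests on the endpoint estimates you would be trying to avoid; only your primary structural argument works there.
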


The rest of this paragraph is devoted to the proof of Proposition \ref{p:a-priori}. Let us begin with a simple lemma, which allows to bound the forcing terms appearing
in equations \eqref{eq:MHDVorticity}.

\begin{lemma}\label{l:CommBPInfinite}
Let $(R,\al, \bt)$ be a triplet of functions in $B^s_{\infty, r}$, with $R$ being scalar and the vector fields $\al$ and $\bt$ being divergence-free.
Then we have the following inequalities: 
\begin{align*}
\left\| \curl \big( R\, \mathfrak{C}(\al + \bt) \big) \right\|_{B^{s-1}_{\infty, r}}\, &\lesssim\,\| R \|_{L^\infty}\, \|\al + \bt\|_{B^{s}_{\infty, r}}\, +\,
\| \al + \bt \|_{L^\infty}\, \| R \|_{B^s_{\infty, r}} \\
\big\| \L(\nabla \al, \nabla \bt) \big\|_{B^{s-1}_{\infty, r}}\, &\lesssim\,\| \nabla \al \|_{L^\infty}\, \| \bt \|_{B^s_{\infty, r}}\, +\, \| \nabla \bt \|_{L^\infty}\, \| \al \|_{B^s_{\infty, r}}\,.
\end{align*}
\end{lemma}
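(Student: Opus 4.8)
The plan is to treat the two inequalities separately, reducing each to the continuity properties of the paraproduct and remainder operators collected in Proposition \ref{p:op}, together with the tame estimate of Corollary \ref{c:tame}. Throughout I would use that under the Lipschitz condition \eqref{i_eq:Lip} one has $s\ge 1>0$, which is exactly what these tools require.

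For the first inequality, the key observation is that $\curl$ is a homogeneous first-order operator, hence maps $\B$ continuously into $B^{s-1}_{\infty,r}$ (this follows from the Bernstein inequalities of Lemma \ref{l:bern}, applied blockwise). Thus it suffices to bound $\|R\,\mathfrak{C}(\al+\bt)\|_{\B}$. Since $\mathfrak{C}$ has constant coefficients, each component of $\mathfrak{C}(\al+\bt)$ is a fixed linear combination of the components of $\al+\bt$, so that $\|\mathfrak{C}(\al+\bt)\|_{\B}\lesssim\|\al+\bt\|_{\B}$ and likewise in $L^\infty$. As $s>0$, Corollary \ref{c:tame} applies to the scalar-times-vector product $R\,\mathfrak{C}(\al+\bt)$ and yields directly
\[
\big\|\curl(R\,\mathfrak{C}(\al+\bt))\big\|_{B^{s-1}_{\infty,r}}\lesssim\|R\,\mathfrak{C}(\al+\bt)\|_{\B}\lesssim \|R\|_{L^\infty}\|\al+\bt\|_{\B}+\|R\|_{\B}\|\al+\bt\|_{L^\infty},
\]
which is the claimed bound.

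For the second inequality I would argue on a single representative bilinear block $A_{ij}:=\sum_k\partial_j\bt_k\,\partial_k\al_i$, since $[\L]_{ij}=A_{ij}-A_{ji}$ and the roles of $\al,\bt$ and of the indices are symmetric. First I would expand each scalar product through Bony's decomposition,
\[
\partial_j\bt_k\,\partial_k\al_i=\mathcal{T}_{\partial_j\bt_k}(\partial_k\al_i)+\mathcal{T}_{\partial_k\al_i}(\partial_j\bt_k)+\mathcal{R}(\partial_j\bt_k,\partial_k\al_i).
\]
The two paraproducts are harmless: the continuity of $\mathcal{T}$ from $L^\infty\times B^{s-1}_{\infty,r}$ into $B^{s-1}_{\infty,r}$ (Proposition \ref{p:op}), with the Lipschitz factor placed in $L^\infty$, produces respectively $\|\nabla\bt\|_{L^\infty}\|\al\|_{\B}$ and $\|\nabla\al\|_{L^\infty}\|\bt\|_{\B}$, using $\|\partial_k\al_i\|_{B^{s-1}_{\infty,r}}\lesssim\|\al\|_{\B}$ and the analogue for $\bt$.

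The delicate term is the remainder, and this is where I expect the main obstacle. A direct estimate fails at the endpoint $s=r=1$: one would then need to control $\mathcal{R}(\partial_j\bt_k,\partial_k\al_i)$ in $B^0_{\infty,1}$, but the regularity indices sum to $0$ and, by the last clause of Proposition \ref{p:op}, the remainder only lands in $B^0_{\infty,\infty}$ — consistent with $B^0_{\infty,1}$ not being an algebra (Remark \ref{r:tame}). To circumvent this I would exploit the divergence-free condition $\div\bt=0$ after summing in $k$. Using the Leibniz rule for $\mathcal{R}$ in the form $\mathcal{R}(\partial_k\al_i,\partial_j\bt_k)=\partial_k\mathcal{R}(\al_i,\partial_j\bt_k)-\mathcal{R}(\al_i,\partial_j\partial_k\bt_k)$ and summing over $k$, the second contribution collapses to $\mathcal{R}(\al_i,\partial_j\div\bt)=0$, whence
\[
\sum_k\mathcal{R}(\partial_j\bt_k,\partial_k\al_i)=\sum_k\partial_k\,\mathcal{R}(\al_i,\partial_j\bt_k).
\]
Now one derivative sits outside the remainder, so after losing it ($\partial_k:\B\to B^{s-1}_{\infty,r}$) it remains to bound $\mathcal{R}(\al_i,\partial_j\bt_k)$ in $\B$. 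Reading $\al_i\in B^{s}_{\infty,r}$ and $\partial_j\bt_k\in L^\infty\hookrightarrow B^0_{\infty,\infty}$, the indices now sum to $s>0$, so Proposition \ref{p:op} applies and gives $\|\mathcal{R}(\al_i,\partial_j\bt_k)\|_{\B}\lesssim\|\al\|_{\B}\|\nabla\bt\|_{L^\infty}$, which is within the target budget. Collecting the three pieces, and the symmetric block $A_{ji}$, then completes the proof. The whole difficulty is therefore concentrated in the remainder at the borderline regularity, and it is resolved precisely by the divergence-free structure rather than by a naive product estimate.
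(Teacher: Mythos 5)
Your proposal is correct and follows essentially the same route as the paper: the first bound via the tame estimate (Corollary \ref{c:tame}) composed with the first-order operator $\curl$, and the second via Bony's decomposition, with the paraproducts handled by Proposition \ref{p:op} and the remainder rescued at the endpoint $s=r=1$ by using the divergence-free condition to pull one derivative outside of $\mathcal{R}$. The only cosmetic difference is the order of operations — the paper first rewrites $\sum_k\partial_j\bt_k\,\partial_k\al_i=\sum_k\partial_k(\al_i\,\partial_j\bt_k)$ and then decomposes, whereas you decompose first and commute $\partial_k$ with $\mathcal{R}$ — but the resulting terms and estimates are identical.
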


\begin{proof}[Proof of Lemma \ref{l:CommBPInfinite}]
The first estimate is a simple consequence of the tame estimates for the Banach algebra $B^{s}_{\infty, r}$, see Corollary \ref{c:tame} above.
Likewise, the second estimate also follows from the tame estimates if $s>1$. The main difficulty is proving this last inequality for the endpoint case $s=r=1$, namely for $\nabla \al$ and $\nabla \bt$ lying in $B^0_{\infty, 1}$, which is not an algebra (keep in mind Remark \ref{r:tame}).

To overcome the problem of working with a $0$ regularity index, we use the fact that $\al$ and $\bt$ are divergence-free to rewrite things in the following way:
\begin{equation}\label{eq:FTrick}
\big[ \L(\nabla \al, \nabla \bt) \big]_{ij}\,  =\, \sum_{k=1}^d\Big(\partial_k \big(\al_i\, \partial_j \bt_k\big)\, -\, \partial_k \big(\al_j\, \partial_i \bt_k\big)\Big)\,.
\end{equation}

Now, making use of the Bony decomposition of a product, we get
\begin{equation*}
\L(\nabla \al, \nabla \bt)\, =\,  
\L_{\mc T}(\nabla\al,\nabla\bt)\,+\,\L_{\mc R}(\nabla\al,\nabla\bt)\,,
\end{equation*}
where we have defined
\begin{align*}
\big[\L_{\mc T}(\nabla\al,\nabla\bt)\big]_{ij}\,&:=\,\sum_{k=1}^d\Big(\mc T_{\d_k\al_i} (\d_j\bt_k) \,+\,\mc T_{\d_j\bt_k} (\d_k\al_i) \,-\,\mc T_{\d_k\al_j} (\d_i\bt_k) \,-\,\mc T_{\d_i\bt_k}(\d_k\al_j) \Big) \\
\big[ \L_{\mathcal{R}}(\nabla \al, \nabla \bt) \big]_{ij}\,&:=\,\sum_{k=1}^d \Big(\mathcal{R} (\d_k\al_i, \partial_j \bt_k)\, -\, \mathcal{R}(\d_k\al_j, \partial_i \bt_k)\Big)\,.
\end{align*}
On the one hand, thanks to Proposition \ref{p:op}, we can easily estimate the paraproducts: with a little abuse of notation, we may write
\begin{equation*}
\big\| \mathcal{T}_{\nabla \al} (\nabla \bt) \big\|_{B^0_{\infty, 1}} + \big\| \mathcal{T}_{\nabla \bt} (\nabla \al) \big\|_{B^0_{\infty, 1}}\,\lesssim\,
\| \nabla \al \|_{L^\infty}\, \| \nabla \bt \|_{B^0_{\infty, 1}}\, +\, \| \nabla \bt \|_{L^\infty}\, \| \nabla \al \|_{B^0_{\infty, 1}}\,.
\end{equation*}
On the other hand, using equation \eqref{eq:FTrick}, we can write the remainder terms in the following form:
\begin{equation*}
\big[ \L_{\mathcal{R}}(\nabla \al, \nabla \bt) \big]_{ij}\, =\,\sum_{k=1}^d\Big( \partial_k \mathcal{R} (\al_i, \partial_j \bt_k)\, -\, \partial_k \mathcal{R}(\al_j, \partial_i \bt_k)\Big)\,.
\end{equation*}
Now, each of the summands can be bounded thanks to Proposition \ref{p:op}. For instance, the first one is bounded by
\begin{equation*}
\big\| \partial_k \mathcal{R} (\al_j, \partial_i \bt_k) \big\|_{B^0_{\infty, 1}} \leq \big\| \mathcal{R} (\al_j, \partial_i \bt_k) \big\|_{B^1_{\infty, 1}} \lesssim \| \nabla \al \|_{B^0_{\infty, \infty}} \| \bt \|_{B^1_{\infty, 1}} \lesssim \| \nabla \al \|_{L^\infty} \| \bt \|_{B^1_{\infty, 1}}\,,
\end{equation*}
where the last inequality is due to the embedding $L^\infty \hookrightarrow B^0_{\infty, \infty}$. The other summand can be dealt with in a symmetric way. Putting all this together, we finally get the
sought bound for $\L(\nabla\al,\nabla\bt)$ in the space $B^0_{\infty,1}$.
The lemma is thus proved.
\end{proof}

With the estimates of Lemma \ref{l:CommBPInfinite} at hand, we can tackle the proof of the proposition.

\begin{proof}[Proof of Proposition \ref{p:a-priori}]
We start by bounding the $L^p$ norms of the solutions. First of all, since $R$ is merely transported by a divergence-free vector field, we get
\begin{equation} \label{est:R_inf}
\forall\,t\geq0\,,\qquad\qquad\|R(t)\|_{L^\infty}\,=\,\|R_0\|_{L^\infty}\,\leq\,\|R_0\|_{\B}\,.
\end{equation}
On the other hand, a simple energy estimate for the equations for $\al$ and $\bt$ in \eqref{eq:MHDab} yields, for some constant $c>0$ depending only on the coefficients of $\mf C$,
the inequality
\begin{equation} \label{est:en-ab}
\big\| \big(\al(t), \bt(t)\big) \big\|_{L^2}\, \leq\, \big\| \big(\al_0, \bt_0\big) \big\|_{L^2}\, e^{c\,t\, \| R_0 \|_{L^\infty}}\,,
\end{equation}
where we also used \eqref{est:R_inf} above and Gr\"onwall's lemma.

Next, 
assume the function $f \in B^s_{\infty, r} \cap L^2$ to be divergence-free, and denote by $\curl(f)$ its ``vorticity matrix'', defined according to \eqref{def:vort}:
$$
\curl(f)\,:=\,Df\,-\,\nabla f\,,\qquad\qquad\mbox{ so that }\qquad \big[\curl(f)\big]_{ij}\,:=\,\d_jf_i\,-\,\d_if_j\,.
$$
Then, using the divergence-free condition on $f$, we have the \emph{Biot-Savart law}
$$
\forall\,j\in[1,d]\,,\qquad\qquad f_j\,=\,(-\Delta)^{-1}\sum_{i=1}^d\d_i\big[\curl(f)\big]_{ij}\,.
$$
From the previous equality, by separating low and high frequencies, we deduce
\begin{align*}
\| f \|_{B^s_{\infty, r}} &\sim \sum_{j=1}^d \left\| \Delta_{-1} (- \Delta)^{-1}\sum_i\d_i\big[\curl(f)\big]_{ij} \right\|_{L^\infty} \\
&\qquad\qquad +\,\left\| \mds{1}_{\left\{\nu \geq 0\right\}}\, 2^{sj}\, \Big\| \Delta_\nu (- \Delta)^{-1}\sum_i\d_i\big[\curl(f)\big]_{ij} \Big\|_{L^\infty} \right\|_{\ell^r(\nu \geq 0)}\,.
\end{align*}
On the one hand, if $\nu \geq 0$, we know that $\Delta_\nu \big[\curl(f)\big]_{ij}$ is spectrally supported in an annulus, on which the symbol of the order $-1$ Fourier multiplier
$(-\Delta)^{-1} \d_i$ is smooth. Hence, by using the Bernstein inequalities of Lemma \ref{l:bern}, we get 
\begin{equation*}
2^{s\nu}\, \Big\| \Delta_\nu (- \Delta)^{-1}\sum_i\d_i\big[\curl(f)\big]_{ij} \Big\|_{L^\infty}\,\lesssim\, 2^{(s-1)\nu}\, \big\| \Delta_\nu\curl(f) \big\|_{L^\infty}\,.
\end{equation*}
On the other hand, using the fact that the symbol of $(-\Delta)^{-1}\nabla\curl$ is homogeneous of degree $0$ and bounded on the unit sphere $|\xi| = 1$, thus $L^\infty$ in a neighborhood of the origin,
Bernstein inequalities and Plancherel's theorem yield
\begin{equation*} 
\left\| \Delta_{-1} (- \Delta)^{-1}\sum_i\d_i\big[\curl(f)\big]_{ij} \right\|_{L^\infty}\,\lesssim\,\left\| \Delta_{-1} (- \Delta)^{-1}\,\nabla\curl(f)\right\|_{L^2}\,\lesssim\,\|f\|_{L^2}\,.
\end{equation*}
Therefore, in the end, we deduce
\begin{equation}\label{eq:abFromCurl}
\| f \|_{B^s_{\infty, r}}\, \lesssim\, \| f \|_{L^2}\, +\, \big\| \curl(f) \big\|_{B^{s-1}_{\infty, r}}\,.
\end{equation}
In view of \eqref{est:en-ab}, this bound tells us that, for bounding $(\al,\bt)$ in $\B$, we can focus on estimates for the $B^{s-1}_{\infty,r}$ norms of $X$ and $Y$, which solve system \eqref{eq:MHDVorticity}.

Let $\Delta_j$ be a dyadic block. Applying it to system \eqref{eq:MHDVorticity}, we find
\begin{equation*}
\begin{cases}
\partial_t(\Delta_j R) + \dfrac{1}{2} (\al + \bt)\cdot \nabla \Delta_j R = \dfrac{1}{2} \big[ (\al + \bt)\cdot \nabla , \Delta_j \big] R \\[1ex]
\partial_t (\Delta_j X) + (\bt \cdot \nabla) \Delta_j X = \big[ \bt \cdot \nabla, \Delta_j \big] X + \Delta_j \L (\nabla \al, \nabla \bt) - \dfrac{1}{2} \Delta_j \curl \big( R\, \mathfrak{C} (\al + \bt) \big)\\[1ex]
\partial_t (\Delta_j Y) + (\al \cdot \nabla) \Delta_j Y = \big[ \al \cdot \nabla, \Delta_j \big] Y +  \Delta_j \L (\nabla \bt, \nabla \al) - \dfrac{1}{2} \Delta_j \curl \big( R\, \mathfrak{C} (\al + \bt) \big)\,.
\end{cases}
\end{equation*}

Lemma \ref{l:CommBPInfinite} gives estimates in $B^{s-1}_{\infty, r}$ for all right-hand side terms, except the commutators:
\begin{equation}\label{eq:pinftyIN1}
\big\| \L (\nabla \al, \nabla \bt)  \big\|_{B^{s-1}_{\infty, r}} + \big\|  \curl \big( R\, \mathfrak{C} (\al + \bt) \big) \big\|_{B^{s-1}_{\infty, r}}\,
\lesssim\, \|\al\|_{B^s_{\infty, r}} \,\|\bt\|_{B^s_{\infty, r}}\, +\, \|\al + \bt\|_{B^s_{\infty, r}}\, \|R\|_{B^s_{\infty, r}}\,.
\end{equation}
For the commutator terms, we use instead Lemma \ref{l:CommBCD}. Note that, because we seek $B^s_{\infty, r}$ estimates on $(R,\al, \bt)$,
the vorticities $X$ and $Y$ will only be bounded in $B^{s-1}_{\infty, r}$, which does not contain the space $W^{1, \infty}$ of Lipschitz functions if $s< 2$, or $s=2$ and $r\neq 1$.
In those cases, we must use the second inequality of Lemma \ref{l:CommBCD}. In both situations, however, we get the bound
\begin{multline}\label{pinftyIN2}
2^{j(s-1)} \Big( \left\| \big[ \bt \cdot \nabla, \Delta_j \big] X \right\|_{L^\infty} + \left\| \big[ \al \cdot \nabla, \Delta_j \big] Y \right\|_{L^\infty} \Big) +
2^{js} \left\| \big[ (\al + \bt) \cdot \nabla, \Delta_j \big] R \right\|_{L^\infty}  \\
\lesssim c_j(t)\Big( \|\al\|_{B^s_{\infty, r}}\, \|\bt\|_{B^s_{\infty, r}}\, +\, \|\al + \bt\|_{B^s_{\infty, r}}\, \|R\|_{B^s_{\infty, r}} \Big)\,,
\end{multline}
for a suitable sequence $\big(c_j(t)\big)_{j\geq -1}$ belonging to the unit sphere of $\ell^r$. Using all this to write an $L^\infty$ estimate for $\Delta_j (R,X, Y)$, we get
\begin{align}\label{eq:ContCritLInftyEstimate}
&2^{j(s-1)} \big\| \Delta_j \big(X(t), Y(t)\big) \big\|_{L^\infty} + 2^{js} \| \Delta_j R(t) \|_{L^\infty}\,\lesssim\,
2^{j(s-1)} \big\| \Delta_j (X_0, Y_0) \big\|_{L^\infty} + 2^{js} \| \Delta_j R_0 \|_{L^\infty} \\
&\qquad\qquad\qquad\qquad\qquad\qquad\qquad
+\int_0^t c_j(\tau) \bigg\{ \|\al\|_{B^s_{\infty, r}} \|\bt\|_{B^s_{\infty, r}} + \|\al + \bt\|_{B^s_{\infty, r}} \|R\|_{B^s_{\infty, r}}  \bigg\} {\rm d} \tau\,, \nonumber
\end{align}

At this point, for all $t\geq0$, we set
\begin{equation*} 
E(t)\,:=\, \| R(t) \|_{B^s_{\infty_r}}\,+\, \big\| \big(\al(t), \bt(t) \big) \big\|_{L^2}\,+\,\big\| \big(X(t), Y(t)\big) \big\|_{B^{s-1}_{\infty, r}}\,.
\end{equation*}
Using the energy estimate \eqref{est:en-ab} and the previous inequality \eqref{eq:ContCritLInftyEstimate}, we get, thanks to \eqref{eq:abFromCurl} and the Minkowski inequality
(see Proposition 1.3 in \cite{BCD}), the bound
\begin{equation*}
E(t)\, \lesssim\, E(0)\;\exp\big(c\, t\, \|R_0\|_{L^\infty} \big)\, +\, \int_0^t E(\tau)^2\, {\rm d} \tau\,.
\end{equation*}
To end the proof, we define the time $T^*>0$ by
\begin{equation*}
T^*\, =\, \sup \left\{ T > 0\; \bigg|\quad \int_0^t E(\tau)^2\, {\rm d} \tau\, \leq\, E(0)\, e^{c\,t\, \| R_0 \|_{L^\infty}} \; \right\}\,.
\end{equation*}
Then we deduce $E(t) \leq CE(0)e^{ct \| R_0 \|_{L^\infty}}$ for all times $t \in [0, T^*]$ and for some positive constant $C = C(d,s,r)$. Therefore, for such times, the following
inequality holds true:
\begin{equation*}
\int_0^t E(\tau)^2\, {\rm d} \tau\, \leq\,  \frac{C\,E(0)^2}{2\,c\,\| R_0 \|_{L^\infty}} \left( e^{2ct\, \|R_0\|_{L^\infty} } -1  \right)\,.
\end{equation*}
By using the definition of $T^*$, we see that
\begin{equation*}
T^* \geq \frac{C_1}{\|R_0\|_{L^\infty}} {\rm argsinh} \left( C_2 \frac{\|R_0\|_{L^\infty}}{E(0)}  \right)\,,
\end{equation*}
for some suitable positive constants $C_1$ and $C_2$. This ends the proof of the proposition.
\end{proof}

\subsubsection{Proof of existence} \label{sss:approx}

In the previous paragraph, we have shown \tsl{a priori} bounds, in the relevant norms, for smooth solutions to system \eqref{i_eq:MHD-I}. Here, we present the proof of the existence
of solutions at the claimed level of regularity.

For this, we follow a standard procedure: first of all, we construct a sequence of smooth solutions
to approximate problems. Next, from the estimates of Paragraph \ref{sss:bounds} we deduce uniform bounds for that sequence of approximate solutions.
Finally, by use of those uniform bounds and an energy argument, we are able to show strong convergence properties for suitable quantities, which in turn allow us
to take the limit in the approximation parameter and gather the existence of a solution to the original problem.

Throughout this paragraph, we will exploit the equivalence of equations \eqref{i_eq:MHD-I}  with the Els\"asser formulation \eqref{eq:MHDab}, as established
by Theorem \ref{th:symm}. Also, for simplicity we are going to assume $r<+\infty$: the case $r=+\infty$ can be handled with minor modifications.

\paragraph{Construction of smooth approximate solutions.}
For any $n\in\N$, let us define
$$
\big(R^n_0\,,\,\al^n_0\,,\,\bt^n_0\big)\,:=\,\big(S_nR_0\,,\,S_n\al_0\,,\,S_n\bt_0\big)\,,
$$
where $S_n$ is the low frequency cut-off operator introduced in \eqref{eq:S_j}. By the finite energy assumption $\al_0, \bt_0 \in L^2$, one has, for any $n\in\N$, $\al^n_0,\bt^n_0\,\in H^\infty:=\bigcap_{\s\in\R}H^\s$, which is obviously embedded (for a suitable topology on $H^\infty$) in the space $C^\infty_b$ of $C^\infty$ functions which are globally bounded together with all their derivatives.
Analogously, for the density functions we have $R^n_0\in\bigcap_{\s\in\R}B^\s_{\infty,r}\hookrightarrow C^\infty_b$.
In addition, we have
\begin{equation} \label{conv:in-data}
R^n_0\,\tend_{n\ra+\infty}\,R_0\quad\mbox{ in }\ \B\qquad\mbox{ and }\qquad \big(\al^n_0\,,\,\bt^n_0\big)\,\tend_{n\ra+\infty}\,\big(\al_0\,,\,\bt_0\big)\quad \mbox{ in }\ L^2\cap\B\,.
\end{equation}

This having been done, we are going to define a sequence of approximate solutions to system \eqref{eq:MHDab} by induction. First of all, we set $\big(R^0\,,\,\al^0\,,\,\bt^0\big)\,:=\,\big(R^0_0\,,\,\al^0_0\,,\,\bt^0_0\big)$.
Obviously, for all $\s\in\R$, we have that $R^0\,\in\, C^0\big(\R_+;B^\s_{\infty,r}\big)$ and $\al^0\,,\,\bt^0\;\in\, C^0\big(\R_+;H^\s\big)$,
with $\div(\al^0)=\div(\bt^0)=0$.

Next, assume that the triplet $\big(R^n,\al^n,\bt^n\big)$ is given, with, for all $\s\in\R$, the properties
$$
R^n\,\in\, C^0\big(\R_+;B^\s_{\infty,r}\big)\,,\qquad \al^n\,,\,\bt^n\;\in\, C^0\big(\R_+;H^\s\big)\qquad\mbox{ and }\qquad \div(\al^n)\,=\,\div(\bt^n)\,=\,0\,.
$$
First of all, we define $R^{n+1}$ as the unique solution to the linear transport equation
\begin{equation} \label{eq:R^n}
\d_tR^{n+1}\,+\,\frac{1}{2}\,\left(\al^n\,+\,\bt^n\right)\cdot\nabla R^{n+1}\,=\,0\,,\qquad\mbox{ with }\quad R^{n+1}_{|t=0}\,=\,R^{n+1}_0\,.
\end{equation}
Since, by inductive hypothesis and embeddings, the transport field $\al^n+\bt^n$ is divergence-free, smooth and uniformly bounded with all its derivatives, we can apply Theorem \ref{th:transport}
to propagate all the $B^\s_{\infty,r}$ norms of the initial datum. We deduce that $R^{n+1}\in C^0\big(\R_+;B^\s_{\infty,r}\big)$, for any $\s\in\R$.

Next, we solve the two (linear) transport equations with divergence-free constraints
\begin{equation} \label{eq:al-bt^n}
\begin{cases}
\partial_t\,\al^{n+1}\, +\, (\bt^n \cdot \nabla) \al^{n+1}\,+\,\nabla\pi_1^{n+1}\, =\,-\,\dfrac{1}{2}\,R^{n+1}\,\mf C\left(\al^n\,+\,\bt^n\right)  \\[1ex]
\partial_t\,\bt^{n+1}\, +\, (\al^n \cdot \nabla) \bt^{n+1}\,+\,\nabla\pi_2^{n+1}\, =\,-\,\dfrac{1}{2}\,R^{n+1}\,\mf C\left(\al^n\,+\,\bt^n\right)  \\[1ex]
\div\left(\al^{n+1}\right)\,=\,\div\left(\bt^{n+1}\right)\,=\,0\,, 
\end{cases}
\end{equation}
with initial data $\al^{n+1}_{|t = 0}=\al^{n+1}_0$ and $\bt^{n+1}_{|t = 0}\, =\bt^{n+1}_0$,  to define the vector fields $\al^{n+1}$ and $\bt^{n+1}$. Notice that the right-hand side
of the previous equations belongs to $L^1_{\rm loc}\big(\R_+;H^\s\big)$ for any $\s\in\R$, thanks to the inductive assumption and the previous regularity property for $R^{n+1}$.
It is not hard to solve the previous linear probem by energy methods; see also Propositions 3.2 and 3.4 of \cite{D} in this respect. We thus find unique solutions $\al^{n+1}$ and $\bt^{n+1}$, belonging to the space $C^0\big(\R_+;H^\s\big)$ for all $\s\in\R$.

We omit here the analysis of the pressure gradients $\nabla\pi_1^{n+1}$ and $\nabla\pi_2^{n+1}$ (which are present to restore the divergence-free conditions on $\al^{n+1}$ and $\bt^{n+1}$),
since they are not needed in the rest of the present proof. However, this analysis can be performed following the argument we will use in the last paragraph of this section,
in order to establish the regularity of the (limit) pressure functions $\nabla\pi_1$ and $\nabla\pi_2$.

\paragraph{Uniform bounds for the approximate solutions.}
We now have to show uniform bounds for the sequence $\big(R^n,\al^n,\bt^n\big)_{n\in\N}$ we have constructed above. We argue by induction, and prove that there exists a time $T>0$ such that
the following property holds true: for all $t\in[0,T]$ and all $n\in\N$, one has
\begin{align}
&\left\|R^{n}(t)\right\|_{L^\infty}\,\leq\,C\,\|R_0\|_{L^\infty}
\qquad\mbox{ and }\qquad
\left\|\big(\al^n(t)\,,\,\bt^n(t)\big)\right\|_{L^2}\,\leq\,C\,\left\|\left(\al_0\,,\,\bt_0\right)\right\|_{L^2}\,e^{c\|R_0\|_{L^\infty}t}\,, \label{ind:energy} \\
&\big\| R^{n}(t) \big\|_{B^s_{\infty,r}}\,+\, \big\| \big(\al^{n}(t), \bt^{n}(t) \big) \big\|_{L^2\cap \B}\,\leq\,C\,K_0\,e^{C\,K_0\,t}\,, \label{ind:Besov}
\end{align}
where $c\,:=\,\sup_{j,k}|\mf C_{jk}|$, where the constant $C>0$ does not depend on the data or the solutions, and therefore not on $n$, and where we have set
$$
K_0\,:=\,\big\| R_0 \big\|_{B^s_{\infty,r}}\,+\, \big\| \big(\al_0, \bt_0 \big) \big\|_{L^2\cap \B}\,.
$$

It is obvious that the initial triplet $\big(R^0,\al^0,\bt^0\big)$ satisfies the previous requirements. Assume now that, for some $n\in\N$, the triplet $\big(R^n,\al^n,\bt^n\big)$
verifies the same conditions on some time interval $[0,T]$. We want to prove that, in the same time interval, $\big(R^{n+1},\al^{n+1},\bt^{n+1}\big)$
also enjoys the same properties.

First of all, we consider 
the propagation of $L^2$ and $L^\infty$ norms. By equation \eqref{eq:R^n} and the divergence-free condition on $\al^n$ and $\bt^n$, we immediately deduce that, for any $t\geq0$, one has
$$
\left\|R^{n+1}(t)\right\|_{L^\infty}\,=\,\left\|R^{n+1}_0\right\|_{L^\infty}\,\leq\,C\,\left\|R_0\right\|_{L^\infty}\,\leq\,C\,\|R_0\|_{\B}\,,
$$
so that the first property of \eqref{ind:energy} is also verified by $R^{n+1}$. 
Next, a simple energy estimate for equations \eqref{eq:al-bt^n} yields, for any $t\geq0$, the inequality
\begin{align*}
\left\|\big(\al^n(t),\bt^n(t)\big)\right\|_{L^2}\,&\leq\,\left\|\big(\al^n_0,\bt^n_0\big)\right\|_{L^2}\,+\,\int^t_0\left\|R^{n+1}\,\mf C(\al^n+\bt^n)\right\|_{L^2}\,{\rm d}\t \\
&\leq\,C\,\left\|\big(\al_0,\bt_0\big)\right\|_{L^2}\,+\,c\,\|R^{n+1}_0\|_{L^\infty}\int^t_0\left\|\big(\al^n,\bt^n\big)\right\|_{L^2}\,{\rm d}\t\,.
\end{align*}
Using the induction hypothesis, we also get the second inequality in \eqref{ind:energy} at step $n+1$, that is for $\big(\al^{n+1},\bt^{n+1}\big)$.

In order to get bounds for the Besov norms, we resort to the vorticity formulation of \eqref{eq:al-bt^n}: applying the $\curl$ operator to that system leads us to
\begin{equation}\label{eq:Vort^n}
\begin{cases}
\partial_t X^{n+1} + (\bt^n \cdot \nabla) X^{n+1}  = \L \big( \nabla \al^{n+1}, \nabla \bt^n \big) - \dfrac{1}{2} \curl \big( R^{n+1} \mathfrak{C} (\alpha^n + \bt^n) \big)\\[1ex]
\partial_t Y^{n+1} + (\al^n \cdot \nabla) Y^{n+1} = \L \big( \nabla \bt^{n+1}, \nabla \al^n \big) - \dfrac{1}{2} \curl \big( R^{n+1} \mathfrak{C} (\alpha^n + \bt^n) \big)\,.
\end{cases}
\end{equation}
Proceeding exactly as in the proof of Proposition \ref{p:a-priori}, but for the equations \eqref{eq:R^n} and \eqref{eq:Vort^n}, we find an estimate analogous to \eqref{eq:ContCritLInftyEstimate}:
for any $t\geq0$ one has
\begin{align}\label{est:Besov-n}
&2^{j(s-1)} \big\| \Delta_j \big(X^{n+1}(t), Y^{n+1}(t)\big) \big\|_{L^\infty} + 2^{js} \left\| \Delta_j R^{n+1}(t) \right\|_{L^\infty}\\
&\qquad\qquad\qquad\lesssim\,2^{j(s-1)} \big\| \Delta_j \big(X^{n+1}_0, Y^{n+1}_0\big) \big\|_{L^\infty} + 2^{js} \left\| \Delta_j R^{n+1}_0 \right\|_{L^\infty} \nonumber \\
&\qquad\qquad\qquad\qquad\qquad+\int_0^t c_j(\tau) \left\|\big(R^{n+1},\al^{n+1},\bt^{n+1}\big)\right\|_{B^s_{\infty, r}}\,\left\|\big(\al^n,\bt^n\big)\right\|_{B^s_{\infty, r}}
{\rm d} \tau\,, \nonumber
\end{align}
where, as usual, the sequence $\big(c_j(t)\big)_j$ belongs to the unit sphere of $\ell^r$.

At this point, for all $t\geq0$ we set
\begin{equation*} 
E^{n+1}(t)\,:=\, \big\| R^{n+1}(t) \big\|_{B^s_{\infty,r}}\,+\, \big\| \big(\al^{n+1}(t), \bt^{n+1}(t) \big) \big\|_{L^2\cap \B}\,. 
\end{equation*}
Recall that, in view of \eqref{eq:abFromCurl}, one has
\begin{equation} \label{est:equiv-norm}
\|f\|_{L^2\cap\B}\,\sim\,\|f\|_{L^2}\,+\,\|\curl(f)\|_{B^{s-1}_{\infty,r}}\,.
\end{equation}
Thus, taking the $\ell^r$ norm in \eqref{est:Besov-n} and
using the energy inequality \eqref{ind:energy} at level $n+1$, we obtain
$$
E^{n+1}(t)\,\leq\,C\,\left(E^{n+1}(0)\,e^{c\|R_0\|_{L^\infty}t}\,+\,\int^t_0E^{n+1}(\t)\,\left\|\big(\al^n(\t),\bt^n(\t)\big)\right\|_{B^s_{\infty, r}}\,{\rm d}\t\right)\,.
$$
An application of Gr\"onwall's lemma and the fact that $E^{n+1}(0)\lesssim K_0$ finally gives
\begin{equation} \label{est:E^n}
E^{n+1}(t)\,\leq\,C\,K_0\,\exp\left(C\int^t_0\left\|\big(\al^n(\t),\bt^n(\t)\big)\right\|_{B^s_{\infty, r}}\,{\rm d}\t\,+\,c\,\|R_0\|_{L^\infty}\,t\right)\,.
\end{equation}
Before going on, we remark that, from Theorem \ref{th:transport}, we could have obtained a more precise inequality for the Besov norm of the density $R^{n+1}$, \tsl{i.e.}
$$
\left\|R^{n+1}(t)\right\|_{\B}\,\leq\,C\,\left\|R^{n+1}_0\right\|_{\B}\,\exp\left(\int^t_0\left\|\big(\al^n(\t),\bt^n(\t)\big)\right\|_{\B}\,{\rm d}\t\right)\,.
$$
However, this inequality does not really simplify the inductive argument. Therefore, let us resume from estimate \eqref{est:E^n}: from the inductive assumption \eqref{ind:Besov},
we get
$$
\int^t_0\left\|\big(\al^n(\t),\bt^n(\t)\big)\right\|_{\B}\,{\rm d}\t\,\leq\,\frac{C}{c}\,\left(e^{c\,K_0\,t}\,-\,1\right)\,.
$$
Observe that, for $0\leq x\leq1$, one has $e^x-1\,\leq\,x+x^2\,\leq\,2x$. So, if $T>0$ is chosen so small that $cK_0T\leq1$, from the previous bound and \eqref{est:E^n} we finally deduce
$$
E^{n+1}(t)\,\leq\,C\,K_0\,\exp\left(\frac{C}{c}\,\left(e^{c\,K_0\,t}\,-\,1\right)\,+\,c\,\|R_0\|_{L^\infty}\,t\right)\,\leq\,C\,K_0\,e^{(2C+c)K_0\,t}\,,
$$
completing in this way the proof of \eqref{ind:Besov} at the level $n+1$.

\paragraph{Convergence.}
It remains us to show convergence of the sequence $\big(R^n,\al^n,\bt^n\big)_n$ towards a solution $(R,\al,\bt)$ of the original problem \eqref{eq:MHDab}: this is our next goal.

To begin with, we introduce the quantity $\rho^n\,:=\,R^n-R^n_0$, which satisfies the transport equation
$$
\d_t\rho^n\,+\,\frac{1}{2}\,\big(\al^{n-1}\,+\,\bt^{n-1}\big)\cdot\nabla\rho^n\,=\,-\,\frac{1}{2}\,\big(\al^{n-1}\,+\,\bt^{n-1}\big)\cdot\nabla R^n_0\,,\qquad\qquad\rho^n_{|t=0}\,=\,0\,.
$$
Thus, it is easy to check that the sequence $\big(\rho^n\big)_n$ is uniformly bounded (with respect to $n$) in the space
$C^0\big([0,T];L^2\big)$.

By an energy method, similar to the one used for proving uniqueness, we are going to show that $\big(\rho^n\big)_n$, $\big(\al^n\big)_n$ and $\big(\bt^n\big)_n$ are Cauchy sequences in the previous space
$C^0\big([0,T];L^2\big)$. For this, we introduce the following notation: for any couple $(n,p)\in\N^2$, we define the quantities
\begin{align*}
\de\rho^{n,p}\,:=\,\rho^{n+p}-\rho^n\,,\qquad
\de\al^{n,p}\,:=\,\al^{n+p}-\al^n\qquad\mbox{ and }\qquad
\de\bt^{n,p}\,:=\,\bt^{n+p}-\bt^n\,.
\end{align*}
Of course, $\div\de\al^{n,p}\,=\,\div\de\bt^{n,p}\,=\,0$ for any $(n,p)\in\N^2$. In addition, after setting $\de\pi_j^{n,p}:=\pi_j^{n+p}-\pi_j^n$ for $j=1,2$
and 
$\de R^{n,p}\,=\,R^{n+p}-R^n$,
simple computations yield the system of equations
\begin{equation} \label{eq:Cauchy}
\begin{cases}
\partial_t\delta\rho^{n,p} + \dfrac{1}{2} (\al^{n+p-1} + \bt^{n+p-1}) \cdot \nabla \delta\rho^{n,p} \\[1ex]
\qquad\qquad = - \dfrac{1}{2} \big( \delta \al^{n-1,p} + \delta \bt^{n-1,p} \big) \cdot \nabla R^n-\dfrac{1}{2}(\al^{n+p-1}+\bt^{n+p-1})\cdot\nabla\de R^{n,p}(0) \\[1ex]
\partial_t\delta \al^{n,p} + ({\bt}^{n+p-1} \cdot \nabla ) \delta\al^{n,p} + \nabla\de\pi^{n,p}_1 \\[1ex]
\qquad\qquad =-\de\bt^{n-1,p}\cdot\nabla\al^n- \dfrac{1}{2} R^{n+p}\mathfrak{C}\big( \delta \al^{n-1,p} + \delta \bt^{n-1,p} \big) -
\dfrac{1}{2} \delta R^{n,p} \mathfrak{C} \big( {\al}^{n-1} + {\bt}^{n-1} \big) \\[1ex]
\partial_t\delta \bt^{n,p} + ({\al}^{n+p-1} \cdot \nabla ) \delta\bt^{n,p} + \nabla\de\pi^{n,p}_2 \\[1ex]
\qquad\qquad =-\de\al^{n-1,p}\cdot\nabla\bt^n- \dfrac{1}{2} R^{n+p}\mathfrak{C}\big( \delta \al^{n-1,p} + \delta \bt^{n-1,p}\big) -
\dfrac{1}{2} \delta R^{n,p} \mathfrak{C} \big( {\al}^{n-1} + {\bt}^{n-1} \big)\,,
\end{cases}
\end{equation}
supplemented with initial data $\big(\de\rho^{n,p},\de\al^{n,p},\de\bt^{n,p}\big)_{|t=0}\,=\,\big(0,\de\al^{n,p}(0),\de\bt^{n,p}(0)\big)$.

An energy estimate for the first equation in \eqref{eq:Cauchy} gives
\begin{align*}
\left\|\de\rho^{n,p}(t)\right\|_{L^2}\,\lesssim\,\int^t_0
\left(\left\|\big(\de\al,\de\bt\big)^{n-1,p}\right\|_{L^2}\,\left\|\nabla R^n\right\|_{L^\infty}\,+\,\left\|\al^{n+p-1}+\bt^{n+p-1}\right\|_{L^2}\,\left\|\de R^{n,p}(0)\right\|_{L^\infty}\right)\,
{\rm d}\t\,.
\end{align*}
Similarly, from the second and third equations in \eqref{eq:Cauchy}, we get
\begin{align*}
\left\|\big(\de\al,\de\bt\big)^{n,p}(t)\right\|_{L^2}\,&\leq\,\left\|\big(\de\al,\de\bt\big)^{n,p}(0)\right\|_{L^2} \\
&\qquad +\,C\int^t_0\Big(\left\|\big(\de\al,\de\bt\big)^{n-1,p}\right\|_{L^2}\,\left\|\big(\nabla\al^n,\nabla\bt^n,R^{n+p}\big)\right\|_{L^\infty} \\
&\qquad\qquad\qquad +\,
\left(\left\|\de\rho^{n,p}\right\|_{L^2}\,+\,\left\|\de R^{n,p}(0)\right\|_{L^\infty}\right)\,\left\|\big(\al^{n-1},\bt^{n-1}\big)\right\|_{L^2\cap L^\infty}\Big)\,{\rm d}\t\,,
\end{align*}
where we have also used the fact that $\de R^{n,p}\,=\,\de\rho^{n,p}\,-\,\de R^{n,p}(0)$.

By using \eqref{ind:energy}, \eqref{ind:Besov} and Lipschitz embeddings, we know that
\begin{align*}
&\sup_{t\in[0,T]}\Big(\left\|\nabla\big(R^n,\al^n,\bt^n\big)(t)\right\|_{L^\infty}\,+\,\left\|R^{n+p}(t)\right\|_{L^\infty}\Big) \\
&\qquad\qquad
+\int^T_0\Big(\left\|\al^{n+p-1},\bt^{n+p-1}\right\|_{L^2}\,+\,\left\|\big(\al^{n-1},\bt^{n-1}\big)\right\|_{L^2\cap L^\infty}\Big)\dt\,\leq\,C_T\,,
\end{align*}
for a constant $C_T$ depending on $T$, but uniform with respect to $n$ and $p$.
Therefore, from the previous inequalities and Gr\"onwall's lemma, we deduce
\begin{multline*} 
\sup_{[0,t]}\left\|\big(\de\rho,\de\al,\de\bt\big)^{n,p}\right\|_{L^2}\,\leq\,C_T\,\bigg(\left\|\de R^{n,p}(0)\right\|_{L^\infty}\,+\,\left\|\big(\de\al,\de\bt\big)^{n,p}(0)\right\|_{L^2} \\
+\,\int^t_0 \sup_{[0,\t]}\left\|\big(\de\rho,\de\al,\de\bt\big)^{n-1,p}\right\|_{L^2}{\rm d}\t\bigg)\,.
\end{multline*}
After setting
$$
F^n(t)\,:=\,\sup_{p\geq0}\sup_{[0,t]}\left\|\big(\de\rho,\de\al,\de\bt\big)^{n,p}\right\|_{L^2}\ \mbox{ and }\
D^n_0\,:=\,\sup_{p\geq0}\left(\left\|\de R^{n,p}(0)\right\|_{L^\infty}\,+\,\left\|\big(\de\al,\de\bt\big)^{n,p}(0)\right\|_{L^2}\right)\,,
$$
the previous estimate 
implies that, for all $t\in[0,T]$, one has
\begin{equation} \label{est:Cauchy2}
F^n(t)\,\leq\,C_T\,D^n_0\,+\,C_T\int^t_0F^{n-1}(\t)\,{\rm d}\t\,.
\end{equation}
A simple induction argument yields, for all $t\in[0,T]$, the bound
\[
F^n(t)\,\leq\,C_T\sum_{k=0}^{n-1}\left(\frac{\left(C_T\,T\right)^k}{k!}\,D^{n-k}_0\right)\,+\,\frac{\left(C_T\,T\right)^n}{n!}\,F^0(t)\,.
\]
This having been established, we notice that, owing to \eqref{conv:in-data}, 
we have that
\[
\lim_{n\ra+\infty}\sup_{p\geq0}\left(\left\|\de R^{n,p}(0)\right\|_{L^\infty}\,+\,\left\|\big(\de\al,\de\bt\big)^{n,p}(0)\right\|_{L^2}\right)\,=\,0\,.
\]
Hence, using dominated convergence, we can take the limit for $n\ra+\infty$ in \eqref{est:Cauchy2} and conclude, thanks to Gr\"onwall's lemma, that
$$
\lim_{n\ra+\infty}\,\sup_{p\geq0}\,\sup_{t\in[0,T]}\left\|\big(\de\rho,\de\al,\de\bt\big)^{n,p}(t)\right\|_{L^2}\,=\,0\,.
$$
This property implies that $\big(\rho^n\big)_n$, $\big(\al^n\big)_n$ and $\big(\bt^n\big)_n$ are Cauchy sequences in $C^0\big([0,T];L^2\big)$, thus they converge respectively to
some $\rho$, $\al$ and $\bt$ in that space. Define $R\,:=\,\rho\,-\,R_0$.

Observe that, owing to the embedding $L^2\hookrightarrow B^{-d/2}_{\infty,2}$, to uniform bounds and to interpolation, the sequences $\big(\al^n\big)_n$ and $\big(\bt^n\big)_n$ also strongly converge in any intermediate space $L^\infty_T(B^\s_{\infty,r})$, with $\s<s$, and in particular in $L^\infty([0,T]\times\R^d)$.
On the other hand, we have that $R^n\,=\,\rho^n\,-\,R^n_0$ strongly converges to $R$ in $L^\infty_T(L^2_{\rm loc})$.
Thus, it is easy to pass to the limit in the weak formulation of equations \eqref{eq:R^n} and \eqref{eq:al-bt^n}, finding that the triplet $(R,\al,\bt)$ is a weak solution to
the original problem \eqref{eq:MHDab}, for suitable pressure gradients $\nabla\pi_1$ and $\nabla\pi_2$.
Space regularity for $\big(R,\al,\bt\big)$ in $\B$ follows by uniform bounds and Fatou's property in Besov spaces. By the analysis preformed in the proof of Theorem \ref{th:symm},
we also know that $\nabla\pi_1=\nabla\pi_2$.

\paragraph{Regularity of the pressure terms, and final checks.} Let us now devote some attention to the study of the regularity of $\nabla\pi_1$. First of all, similar computations as the ones
leading to \eqref{eq:abFromCurl} give the bound
\begin{equation} \label{est:press_1}
\left\|\nabla\pi_1\right\|_{L^2\cap\B}\,\lesssim\,\left\|\nabla\pi_1\right\|_{L^2}\,+\,\left\|\Delta\pi_1\right\|_{B^{s-1}_{\infty,r}}\,.
\end{equation}
Now, applying the $\div$ operator to the first equation in \eqref{eq:MHDab}, we deduce that $\pi_1$ satisfies the elliptic equation
\begin{equation} \label{eq:ell}
-\Delta\pi_1\,=\,\div F\,,\qquad\mbox{ where }\qquad F\,:=\,(\bt\cdot\nabla)\al\,+\,\frac{1}{2}\,R\,\mf C(\al+\bt)\,.
\end{equation}
On the one hand, an application of the Lax-Milgram theorem implies that
\[
\left\|\nabla\pi_1\right\|_{L^2}\,\lesssim\,\left\|F\right\|_{L^2}\,\lesssim\,\|\bt\|_{L^2}\,\|\nabla\al\|_{L^\infty}\,+\,\|R\|_{L^\infty}\,\|\al+\bt\|_{L^2}\,,
\]
so that $\nabla\pi_1\in L^\infty_T(L^2)$. On the other hand, we observe that, owing to the divergence-free condition on $\al$ and $\bt$, one has
$\div\big((\bt\cdot\nabla)\al\big)\,=\,\nabla\bt:\nabla\al\,=\,\sum_{j,k}\d_j\bt^k\,\d_k\al^j$. Therefore,
\begin{align*}
\left\|\Delta\pi_1\right\|_{B^{s-1}_{\infty,r}}\,&\lesssim\,\left\|\nabla\bt:\nabla\al\right\|_{B^{s-1}_{\infty,r}}\,+\,\left\|\div\big(R\,\mf C(\al+\bt)\big)\right\|_{B^{s-1}_{\infty,r}} \\
&\lesssim\,\|\bt\|_{\B}\,\|\al\|_{\B}\,+\,\|R\|_{\B}\,\|\al+\bt\|_{\B}\,.
\end{align*}
Notice that, when $s=1$, the estimate $\left\|\nabla\bt:\nabla\al\right\|_{B^{s-1}_{\infty,r}}\,\lesssim\,\|\bt\|_{\B}\,\|\al\|_{\B}$
still holds true. For proving this, one has to argue as in the proof of Lemma \ref{l:CommBPInfinite}, and use the divergence-free condition on $\al$ (or $\bt$) in order to bound the remainders
appearing in the Bony decomposition of the previous product $\nabla\bt:\nabla\al$.
In the end, we deduce that $\Delta\pi_1$ belongs to $L^\infty_T(B^{s-1}_{\infty,r})$.
Thus, from \eqref{est:press_1} and those two pieces of information, we conclude that $\nabla\pi_1\in L^\infty_T(L^2\cap\B)$. 

This having been established, we can use classical results on solutions to transport equations in Besov spaces (recall Theorem \ref{th:transport} above)
to infer the claimed time continuity of $R$, $\al$ and $\bt$ with values in $\B$, and of $\al$ and $\bt$ with values in $L^2$. Combining these properties with equation
\eqref{eq:ell}, we discover that also $\nabla\pi_1$ belongs to $C^0_T(L^2\cap \B)$. Since $b=(\al-\bt)/2$ belongs to the same space as well, from that property and Theorem \ref{th:symm}
it is easy to recover that $\nabla\Pi\in C^0_T(L^2\cap \B)$. Finally, the claimed regularity properties for the time derivatives $\d_tR$, $\d_tu$ and $\d_tb$
follow from an inspection of the equations in \eqref{i_eq:MHD-I}.

The proof of the existence is now completed.

\subsection{The continuation criterion in Els\"asser variables} \label{ss:proof-cc}

In this section, we seek to prove the continuation criterion of Theorem \ref{th:cont-crit} for the solutions of \eqref{i_eq:MHD-I} in $B^s_{\infty, r}$,
where the couple $(s, r)$ satisfies the Lipschitz condition \eqref{i_eq:Lip}. Thanks to the equivalence stated in Theorem \ref{th:symm}, it is enough to prove an analogous
continuation criterion for the Els\"asser variables $(R,\al,\bt)$ solving system \eqref{eq:MHDab}.

\begin{prop}\label{p:ContInfinite}
Let $\big(R_0,\al_0, \bt_0\big)\in \left(B^s_{\infty, r}\right)^3$, with $\D(\al_0) = \D(\bt_0) = 0$ and $\al_0,\bt_0\in L^2$.
Given a time $T > 0$, let $(R,\al, \bt)$ be a solution of \eqref{eq:MHDab} on $[0,T[\,$, related
to that initial datum and belonging to the space $L^\infty_t(\B)\times L^\infty_t(L^2\cap \B)\times L^\infty_t(L^2\cap \B)$ for any $0\leq t<T$.
Assume moreover that 
\begin{equation}\label{eq:ContCritPInfinite}
\int_0^T \Big( \|\nabla \al \|_{L^\infty} + \|\nabla \bt \|_{L^\infty} \Big) \dt\, <\, +\infty\,.
\end{equation}

Then $(R,\al, \bt)$ can be continued beyond $T$ into a solution of \eqref{eq:MHDab} with the same regularity.
\end{prop}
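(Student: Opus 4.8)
The plan is to show that, under hypothesis \eqref{eq:ContCritPInfinite}, the quantity
\[
\mathcal{E}(t)\,:=\,\big\|R(t)\big\|_{\B}\,+\,\big\|\big(\al(t),\bt(t)\big)\big\|_{L^2\cap\B}
\]
stays bounded as $t\to T^-$. Once this is established, the triplet $(R,\al,\bt)(t)$ admits a limit as $t\to T^-$ (strongly in the lower-order norms, weakly-$*$ in $\B$) lying in the space $\mbb X^s_r$ of Theorem \ref{th:BesovWP}; one then restarts the Cauchy problem from time $T$ with this datum. The existence time provided by \eqref{est:d-life} depends only on the (now bounded) norm of the restart datum, so the solution is prolonged past $T$, while the stability estimate of Theorem \ref{th:w-s} guarantees that the prolongation coincides with $(R,\al,\bt)$ on $[0,T[\,$, hence continues it with the same regularity. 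By Theorem \ref{th:symm} this transfers back to the physical variables $(R,u,b)$.

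To bound $\mathcal{E}$, I would work on the vorticity formulation \eqref{eq:MHDVorticity} and invoke the Biot--Savart inequality \eqref{eq:abFromCurl}, which reduces the control of $\|(\al,\bt)\|_{\B}$ to that of $\|(\al,\bt)\|_{L^2}$ together with $\|(X,Y)\|_{B^{s-1}_{\infty,r}}$. The $L^\infty$ norm of $R$ is conserved, $\|R(t)\|_{L^\infty}=\|R_0\|_{L^\infty}$, and the energy bound \eqref{est:en-ab} keeps $\|(\al,\bt)\|_{L^2}$ bounded on the finite interval $[0,T]$. For the vorticity part I would localise \eqref{eq:MHDVorticity} with $\Delta_j$ and run the transport estimates, estimating the nonlinear forcings by Lemma \ref{l:CommBPInfinite} and the commutators by the refined bound \eqref{eq:lCommBLinfty}. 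The decisive observation is that $\|X\|_{L^\infty}+\|Y\|_{L^\infty}\lesssim\|\nabla\al\|_{L^\infty}+\|\nabla\bt\|_{L^\infty}=:V(t)$ and $\|\al+\bt\|_{L^\infty}\lesssim\|(\al,\bt)\|_{L^2}+V(t)$, so that every commutator and every contribution of $\L(\nabla\al,\nabla\bt)$ carries a coefficient built only from $V$, the conserved $\|R_0\|_{L^\infty}$ and the energy-bounded $\|(\al,\bt)\|_{L^2}$, all with finite integral over $[0,T]$. Since by Corollary \ref{c:reg} it suffices to argue at $s=r=1$, the vorticities live in $B^0_{\infty,1}$ and one may even invoke the linear-in-$V$ estimate of Theorem \ref{th:AnnInnLinTV}.

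The hard part will be the density--velocity coupling through the forcing $\curl\big(R\mf C(\al+\bt)\big)$. Lemma \ref{l:CommBPInfinite} bounds this term by $\|R_0\|_{L^\infty}\|\al+\bt\|_{\B}+\|\al+\bt\|_{L^\infty}\|R\|_{\B}$, so $\|R\|_{\B}$ enters the estimate for $(X,Y)$; conversely, as $R$ is transported by $\frac12(\al+\bt)$, the propagation of its $\B$ norm is governed by $\int_0^t\|\al+\bt\|_{\B}$, so that $\|(\al,\bt)\|_{\B}$ feeds back into $\|R\|_{\B}$. Treated naively, this two-way coupling produces a genuinely quadratic Gronwall inequality which need not close on $[0,T]$. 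The way out is to exploit the special algebraic structure of the coupling term: writing $\mf C=\mf C_{\rm skew}+\mf C_{\rm sym}$, the skew part satisfies $\curl\big(R\mf C_{\rm skew}(\al+\bt)\big)\propto(\al+\bt)\cdot\nabla R$, which recombines with the transport operator (equivalently, is proportional to $\partial_t R$) and can be absorbed after forming a suitable combined unknown; the remaining pieces should be rewritten as divergences $\div(R\,\cdot\,)$ in order to keep $R$ measured in $L^\infty$ wherever a full derivative is available. The goal of this reorganisation is to ensure that $\|R\|_{\B}$ appears in the $(X,Y)$ estimate only against an integrable-in-time coefficient (with no persistent constant part), which is exactly what removes the dangerous cross term. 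This is the same mechanism that underlies the lifespan analysis, and it is what forces the extra iterated logarithms in the general, non skew-symmetric case.

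After this decoupling one is left with a single differential inequality of the form $\mathcal{E}'(t)\le C\,W(t)\,\mathcal{E}(t)$, with $W(t)\lesssim V(t)+\|R_0\|_{L^\infty}+\|(\al,\bt)(t)\|_{L^2}$ and $\int_0^T W<+\infty$; Gronwall's lemma then yields $\sup_{[0,T[}\mathcal{E}<+\infty$, which closes the a priori bound and hence the continuation argument. The step I expect to require the most care is precisely this algebraic separation of $R$ from $(\al,\bt)$ inside $\curl\big(R\mf C(\al+\bt)\big)$, since it is the only point where the two top-order norms genuinely interact and where the integrability of the final coefficient $W$ is decided.
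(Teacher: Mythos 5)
Your overall skeleton (a priori bound on $\|R\|_{\B}+\|(\al,\bt)\|_{L^2\cap\B}$ via the vorticity formulation, Biot--Savart, commutator estimates and Gr\"onwall, followed by a restart of the Cauchy problem) is the paper's strategy, and your final differential inequality $\mathcal{E}'\leq C\,W\,\mathcal{E}$ with $W\in L^1_T$ is the right target. However, the paragraph you flag as the ``hard part'' rests on a misdiagnosis, and the fix you propose there is both unnecessary and not a workable argument. The coupling through $\curl\big(R\,\mf C(\al+\bt)\big)$ does \emph{not} produce a quadratic Gr\"onwall inequality: the tame estimate of Lemma \ref{l:CommBPInfinite} bounds it by $\|R\|_{L^\infty}\|\al+\bt\|_{\B}+\|\al+\bt\|_{L^\infty}\|R\|_{\B}$, and \emph{both} coefficients in front of the top-order norms are under control on $[0,T]$ --- $\|R\|_{L^\infty}=\|R_0\|_{L^\infty}$ is conserved, and $\|\al+\bt\|_{L^\infty}\lesssim\|(\al_0,\bt_0)\|_{L^2}e^{ct\|R_0\|_{L^\infty}}+\|\nabla(\al+\bt)\|_{L^\infty}$ by a low/high frequency splitting, hence is integrable by hypothesis \eqref{eq:ContCritPInfinite}. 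The quadratic appearance only arises if you propagate $\|R\|_{\B}$ through Theorem \ref{th:transport} (exponential of $\int\|(\al,\bt)\|_{\B}$); the paper instead keeps the $R$-equation at the level of the commutator estimate of Lemma \ref{l:CommBCD}, whose second term is $\|\nabla(\al+\bt)\|_{B^{s-1}_{\infty,r}}\,\|\nabla R\|_{L^\infty}$, i.e.\ linear in $\|(\al,\bt)\|_{\B}$ with coefficient $\|\nabla R\|_{L^\infty}$. The one ingredient your proposal is actually missing is the separate bound \eqref{est:DR}: differentiating the transport equation for $R$ and doing an $L^\infty$ estimate gives $\|\nabla R(t)\|_{L^\infty}\leq\|\nabla R_0\|_{L^\infty}\exp\big(C\int_0^T\|\nabla(\al+\bt)\|_{L^\infty}\big)$, finite precisely under \eqref{eq:ContCritPInfinite}. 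With that, every top-order norm in the coupled system for $\big(\|R\|_{\B},\|(X,Y)\|_{B^{s-1}_{\infty,r}}\big)$ carries an integrable coefficient and Gr\"onwall closes. No decomposition $\mf C=\mf C_{\rm skew}+\mf C_{\rm sym}$, no recombination with the transport operator, and no ``combined unknown'' are needed; the structure of $\mf C$ plays a role only in the energy law (hence in the lifespan bounds of Theorem \ref{th:lifespan}), not in the continuation criterion.

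Two further points. First, your appeal to Corollary \ref{c:reg} to reduce to $s=r=1$ is circular: that corollary is itself a consequence of the continuation criterion you are proving. The paper's proof works directly at general $(s,r)$, choosing between the two commutator estimates of Lemma \ref{l:CommBCD} according to whether $X,Y\in B^{s-1}_{\infty,r}$ embeds in $W^{1,\infty}$ or not. Second, the linear-in-$\nabla v$ estimate of Theorem \ref{th:AnnInnLinTV} is not needed here (the exponential Gr\"onwall bound suffices for finiteness on $[0,T]$); it becomes essential only for the quantitative lifespan estimate of Section \ref{s:lifespan}.
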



\begin{proof} 
The previous statement is somewhat classical for quasi-linear hyperbolic problems. A standard continuation argument, which is based on uniqueness of solutions (and which we omit here for the sake of
conciseness), allows us to reduce the proof to showing that, under assumption \eqref{eq:ContCritPInfinite}, the solution $(R,\al,\bt)$ remains bounded in
$L^\infty_T(\B)\times L^\infty_T(L^2\cap\B)\times L^\infty_T(L^2\cap\B)$.

For obtaining this latter property, the main point is to exhibit more precise estimates for the non-linear terms than the ones we used in \eqref{eq:ContCritLInftyEstimate}.
To begin with, we make full use of the inequalities of Lemma \ref{l:CommBPInfinite} to get, instead of \eqref{eq:pinftyIN1}, the following bounds:
\begin{multline*}
\big\| \L (\nabla \al, \nabla \bt)  \big\|_{B^{s-1}_{\infty, r}} + \big\|  \curl \big( R \mathfrak{C} (\al + \bt) \big) \big\|_{B^{s-1}_{\infty, r}} \\ 
\lesssim \Big( \|\nabla \al \|_{L^\infty} + \|\nabla \bt \|_{L^\infty} + \| \al+\bt \|_{L^\infty} + \| R \|_{L^\infty} \Big)\, \big\| (R,\al, \bt) \big\|_{B^s_{\infty, r}}\,.
\end{multline*}
We do the same with the commutator terms, employing Lemma  \ref{l:CommBCD} above: having the precaution of distinguishing among the cases $s>2$, $s<2$ and $s=2$ and $r\neq 1$, like we have done
for obtaining \eqref{pinftyIN2}, we can replace that inequality by
\begin{multline*}
2^{j(s-1)} \Big( \left\| \big[ \bt \cdot \nabla, \Delta_j \big] X \right\|_{L^\infty} + \left\| \big[ \al \cdot \nabla, \Delta_j \big] Y \right\|_{L^\infty} \Big) +
2^{js}\left\| \big[ (\al + \bt) \cdot \nabla, \Delta_j \big] R \right\|_{L^\infty} \\
\lesssim c_j(t)\,\Big( \|\nabla \al \|_{L^\infty} + \|\nabla \bt \|_{L^\infty} + \| \nabla R \|_{L^\infty} \Big)\, \big\| (R\,\al, \bt) \big\|_{B^s_{\infty, r}}\,,
\end{multline*}
where we have also used the trivial fact that, by definition, $\|X\|_{L^\infty}\,\leq\,2\,\|\nabla\al\|_{L^\infty}$ and $\|X\|_{B^{s-1}_{\infty,r}}\,\leq\,2\,\|\al\|_{\B}$
(and the same relations hold for $Y$ and $\beta$).
Here above, as usual, the sequence $\big(c_j(t)\big)_j$ belongs to the unit ball of $\ell^r$.

Thanks to the previous bounds, we get the following estimate: for all $t\in[0,T[\,$, one has
\begin{multline} \label{est:cont-crit}
\left\|R(t)\right\|_{\B}\,+\,\big\| \big(X(t), Y(t)\big) \big\|_{B^{s-1}_{\infty, r}}\,\lesssim\,\left\|R_0\right\|_{\B}\,+\,\big\| \big(X_0, Y_0\big) \big\|_{B^{s-1}_{\infty, r}} \\
+ \int_0^t  \Big( \| \nabla \al \|_{L^\infty} + \| \nabla \bt \|_{L^\infty} + \|R\|_{W^{1,\infty}} + \| \al + \bt \|_{L^\infty} \Big)\,
\big\|\big(R,\al, \bt\big) \big\|_{B^s_{\infty, r}} {\rm d} \tau\,.
\end{multline}
It remains to find estimates for $\| \al + \bt \|_{L^\infty}$ and $\|R\|_{W^{1,\infty}}$. Let us start by dealing with the first term: separating low and high frequencies yields
\begin{equation*}
\|\al + \bt\|_{L^\infty} \leq \|\Delta_{-1} (\al + \bt)\|_{L^\infty} + \sum_{j \geq 0} \|\Delta_j (\al + \bt) \|_{L^\infty}.
\end{equation*}
Using the first Bernstein inequality (see Lemma \ref{l:bern} above), we have $\|\Delta_{-1} (\al + \bt)\|_{L^\infty} \leq \|\al + \bt\|_{L^2}$, for which we can employ \eqref{est:en-ab}
to deduce that this quantity is bounded with respect to time. As for the high frequency terms, we write, thanks to the second Bernstein inequality, 
\begin{equation*}
\sum_{j \geq 0} \|\Delta_j (\al + \bt) \|_{L^\infty} \lesssim \sum_{j \geq 0} 2^{-j} \|\Delta_j \nabla (\al + \bt) \|_{L^\infty} \lesssim \|\nabla (\al + \bt)\|_{L^\infty}\,.
\end{equation*}
Therefore, we have
\begin{equation} \label{est:a-b_inf}
\| \al + \bt \|_{L^\infty}\, \lesssim\, \big\|\big(\al_0, \bt_0\big)\big\|_{L^2}\,e^{c\,t\,\| R_0 \|_{L^\infty}}\, +\, \| \nabla (\al + \bt)\|_{L^\infty}\,.
\end{equation}
Concerning the second term, \tsl{i.e.} $\|R\|_{W^{1,\infty}}$, we notice that estimate \eqref{est:R_inf} still holds true. On the other hand, combining
a simple differentiation of the equation for $R$ and an $L^\infty$ estimate for the resulting transport equation, we are led to
\begin{equation} \label{est:DR}
\big\|\nabla R(t)\big\|_{L^\infty_T(L^{\infty})}\, \leq\, \big\|\nabla R_0 \big\|_{L^{\infty}}\, \exp\left( C \int_0^T \big\|\nabla(\al + \bt)\big\|_{L^\infty}\, \dt \right)\,.
\end{equation}

Thus, under assumption \eqref{eq:ContCritPInfinite}, we deduce that
\begin{align*}
\sup_{t\in[0,T[}\left\|R(t)\right\|_{W^{1,\infty}}\,&\lesssim\,\left\|R_0\right\|_{W^{1,\infty}} \\
\| \al(t) + \bt(t) \|_{L^2\cap L^\infty}\, &\lesssim\, \big\|\big(\al_0, \bt_0\big)\big\|_{L^2}\,e^{C\,T}\, +\, \big\| \nabla\big(\al + \bt\big)(t)\big\|_{L^\infty}\,,
\end{align*}
where the second inequality holds true for every $t\in[0,T[\,$.
Inserting those bounds into \eqref{est:cont-crit} and using the fact that
$$
\big\|\big(\al(t),\bt(t)\big)\big\|_{L^2}+\big\| \big(X(t), Y(t)\big) \big\|_{B^{s-1}_{\infty, r}}\,\sim\,\big\|\big(\al(t),\bt(t)\big)\big\|_{L^2\cap\B}\,,
$$
which holds in view of \eqref{est:equiv-norm}, we finally find
\begin{align*} 
&\left\|R(t)\right\|_{\B}+\big\|\big(\al(t),\bt(t)\big)\big\|_{L^2\cap\B}\,\lesssim\,e^{CT}\bigg(\left\|R_0\right\|_{\B}+\big\|\big(\al_0,\bt_0\big)\big\|_{L^2\cap\B} \\
&\quad 
+ \int_0^t  \Big( \| \nabla \al \|_{L^\infty} + \| \nabla \bt \|_{L^\infty} + \|R_0\|_{W^{1,\infty}} + \big\|\big(\al_0, \bt_0\big)\big\|_{L^2}\Big)\,
\left(\|R\|_{\B}\,+\,\big\|\big(\al, \bt\big) \big\|_{L^2\cap\B}\right) {\rm d} \tau\bigg).
\end{align*}
An application of Gr\"onwall's lemma, together with assumption \eqref{eq:ContCritPInfinite}, finally yields
$$
\sup_{t\in[0,T[}\left(\left\|R(t)\right\|_{\B}+\big\|\big(\al(t),\bt(t)\big)\big\|_{L^2\cap\B}\right)\,<\,+\,\infty\,,
$$
and this bound, as already claimed at the beginning of the proof, implies the result.
\end{proof}


\section{Improved lifespan in the two-dimensional case} \label{s:lifespan}

In this section we prove Theorem \ref{th:lifespan}, namely an improved lower bound on the lifespan of the solutions in the two-dimensional case.


\subsection{The structure of the bilinear term} \label{ss:structure}
In all that follows, we consider the case of space dimension $d=2$. Thus, the $\curl$ of a function $f : \mathbb{R}^2 \longrightarrow \mathbb{R}^2$ can be identified with the scalar function
\begin{equation*}
\curl(f) = \partial_1 f_2 - \partial_2 f_1\,.
\end{equation*}
As a consequence, the bilinear term $\L (\nabla \al, \nabla \bt)$ introduced in \eqref{eq:LOperator} reads
\begin{equation}\label{eq:2DLOperator}
\L(\nabla \al,\nabla \bt) = \partial_1 \al_1\, \left( \partial_1 \bt_2 + \partial_2 \bt_1 \right)\, +\, \partial_2 \bt_2\, \left( \partial_1 \al_2 + \partial_2 \al_1 \right)\,.
\end{equation}
By noting $\mf T_2$ the linear space of traceless $2 \times 2$ matrices, we may see $\L$ as a bilinear operator $\L : \mf T_2 \times \mf T_2 \tend \mathbb{R}$ which is, by virtue of \eqref{eq:2DLOperator}, and in the special case $d = 2$, skew-symmetric. Now, by writing $\al$ and $\bt$ as functions of $u$ and $b$, and by noting that, by skew-symmetry, one has $\L(\nabla f, \nabla f) = 0$ for any divergence-free vector field $f$, we get
\begin{equation}\label{eq:LRewriteUB}
\L(\nabla \al, \nabla \bt)\, =\, \L \big( \nabla (u + b), \nabla (u - b) \big)\,=\, -\,2\, \L (\nabla u, \nabla b)\,.
\end{equation}

\begin{rmk} \label{r:life-b}
Owing to the bilinearity of $\L$, a result similar to Theorem \ref{th:lifespan} holds true for solutions to the Els\"asser system \eqref{eq:MHDab} with respect to small values of $\bt$,
\emph{regardless the space dimension}.
Moreover, as we have a better equation for $\bt$ in \eqref{eq:MHDab} then we have for $b$ in \eqref{i_eq:MHD-I}, we have no need of initial data in the higher regularity space
$B^2_{\infty, 1}$. Specifically, assuming $R\equiv0$ for simplicity, we have, in all dimensions, 
\begin{equation*}
T^*\, \geq\, \frac{C}{\big\|\big(\al_0, \bt_0\big)\big\|_{B^1_{\infty, 1} \cap L^2 }}\;
\bigg[\log\big(1\,+\,C\,\cdot\,\big)\bigg]^{\bigcirc2}\left( \frac{\big\|\big(\al_0, \bt_0\big) \big\|_{B^1_{\infty, 1} \cap L^2 } }{\| \bt_0 \|_{B^1_{\infty, 1}}} \right)\,.
\end{equation*}
However, the regime of small $\beta$ is not of great interest, since in that case the ``true'' MHD system \eqref{i_eq:MHD-I} degenerates.
For this reason, we focus on the result of Theorem \ref{th:lifespan}.
\end{rmk}

\subsection{Proof of Theorem \ref{th:lifespan}} \label{ss:life}

We now present the proof of Theorem \ref{th:lifespan}.
As usual in our approach, we work in Els\"asser variables, thanks to the equivalence provided by Theorem \ref{th:symm} in our functional framework.

As was already the case in \cite{DF} and \cite{F-L} in dealing with different systems, the main idea of the proof is to take advantage
of the linear estimates of Theorem \ref{th:AnnInnLinTV} for the transport equations in Besov spaces with $0$ regularity index.
For this, it is fundamental to work with the vorticity formulation \eqref{eq:MHDVorticity} of the equations, since $X,Y\in B^{0}_{\infty,1}$.

We divide the proof of Theorem \ref{th:lifespan} into three main steps.

\subsubsection*{Step 1: an estimate with loss of derivatives.}
Although the data possess additional regularity, by virtue of Theorem \ref{th:cont-crit}, it is enough to bound the lifespan in the lower regularity space $B^1_{\infty,1}$.
For this, we define
\begin{equation*}
\E(t)\,:=\, \big\| \big(\al(t), \bt(t)\big) \|_{L^2}\, +\, \big\| \big(X(t), Y(t)\big) \big\|_{B^0_{\infty, 1}}\,\sim\,\big\| \big(\al(t), \bt(t)\big) \|_{L^2\cap B^1_{\infty,1}} \,.
\end{equation*}

Let us focus for a while on bounding the Besov norm of the vorticities $X$ and $Y$. Since they solve system \eqref{eq:MHDVorticity}, Theorem \ref{th:AnnInnLinTV} provides us with the estimate
\begin{multline*}
\big\| \big(X(t), Y(t)\big) \big\|_{B^0_{\infty, 1}}\,\lesssim\, \left( 1 + \int_0^t \|(\nabla \al, \nabla \bt) \|_{L^\infty}\,{\rm d}\t \right) \\
\times\,\left\{ \big\| \big(X_0, Y_0\big) \big\|_{B^0_{\infty, 1}} + \int_0^t \Big( \|\L(\nabla \al, \nabla \bt) \|_{B^0_{\infty, 1}} +
\left\|\curl\big(R\,\mf C(\al + \bt)\big)\right\|_{B^0_{\infty, 1}} \Big) {\rm d} \tau \right\}\,.
\end{multline*}
The key point of our proof is that, owing to \eqref{eq:LRewriteUB} above,
the integral terms in the second line are in fact linear with respect to both $R$ and $\nabla b$.
By use of Lemma \ref{l:CommBPInfinite} and relation \eqref{eq:LRewriteUB}, we gather
\begin{align*}
\left\| \curl \big( R\, \mathfrak{C}(\al + \bt) \big) \right\|_{B^{0}_{\infty, 1}}\, &\lesssim\,\| R \|_{L^\infty}\, \|\al + \bt\|_{B^{1}_{\infty, 1}}\, +\,
\| \al + \bt \|_{L^\infty}\, \| R \|_{B^1_{\infty, 1}} \\
&\lesssim\,\|R\|_{B^1_{\infty,1}}\,\E(t) \\
\big\| \L(\nabla \al, \nabla \bt) \big\|_{B^{0}_{\infty, 1}}\,=\,2\,\big\| \L(\nabla u, \nabla b) \big\|_{B^{0}_{\infty,1}}\,&\lesssim\,\| \nabla u \|_{L^\infty}\, \|b \|_{B^1_{\infty, 1}}\, +\,
\| \nabla b \|_{L^\infty}\, \| u \|_{B^1_{\infty, 1}} \\
&\lesssim\,\|b\|_{B^1_{\infty,1}}\,\E(t) \,.
\end{align*}
Thanks to those inequalities and the energy estimate \eqref{est:en-ab}, we deduce
\begin{equation}\label{eq:2DIntIneq}
\E(t)\,\lesssim\,\left( 1 + \int_0^t \E(\tau)\, {\rm d} \tau \right)\,
\left\{ \E(0)\,e^{c\|R_0\|_{L^\infty}t} \,+\, \int_0^t \E(\tau)\,\Big( \| b \|_{B^1_{\infty, 1}} + \| R \|_{B^1_{\infty, 1}} \Big)\, {\rm d} \tau \right\}\,.
\end{equation}

In order for this to be practical, we must finds bounds on $b$ and $R$ which depend only on the initial data. First of all, 
Theorem \ref{th:transport} implies that
\begin{equation}\label{eq:REstimate}
\| R \|_{B^1_{\infty, 1}}\, \leq\, \| R_0 \|_{B^1_{\infty, 1}}\, \exp \left( C \int_0^t \E(\tau)\, {\rm d} \tau \right)\,.
\end{equation}
Next, applying the dyadic block $\Delta_j$ to the transport equation solved by $b$, see system \eqref{i_eq:MHD-I}, we get
\begin{equation*}
\big( \partial_t + u\cdot \nabla \big) \Delta_j b = \Delta_j\big( (b \cdot \nabla) u\big) + \big[ u \cdot \nabla, \Delta_j  \big]b\,.
\end{equation*}
Using Corollary \ref{c:tame} and Lemma \ref{l:CommBCD}, we can estimate the right-hand side by
\begin{equation*}
2^j \left\| \Delta_j\big( (b \cdot \nabla) u\big) \right \|_{L^\infty} + 2^j \left\| \big[ u \cdot \nabla, \Delta_j  \big]b \right\|_{L^\infty}\,\lesssim\,c_j(t)\,\|b\|_{B^1_{\infty, 1}}\,
\|u\|_{B^2_{\infty, 1}}\,,
\end{equation*}
where, as usual, the sequence $\big(c_j(t)\big)_j$ belongs to $\ell^1$, with $\sum_{j\geq -1} c_j(t) = 1$. This gives rise to the differential inequality
\begin{equation*}
\big\| b(t) \big\|_{B^1_{\infty, 1}}\, \lesssim\, \big\| b_0 \big\|_{B^1_{\infty, 1}}\, +\, \int_0^t \|b(\t)\|_{B^1_{\infty, 1}}\, \|u(\t)\|_{B^2_{\infty, 1}}\,  {\rm d} \tau\,,
\end{equation*}
and an application of Gr\"onwall's lemma yields
\begin{equation}\label{eq:BEstimate}
\big\| b(t) \big\|_{B^1_{\infty, 1}}\, \lesssim\, \big\| b_0 \big\|_{B^1_{\infty, 1}} \,\exp\left( C \int_0^t \| u(\t) \|_{B^2_{\infty, 1}}\,{\rm d}\t  \right)\,.
\end{equation}
Inserting both estimates \eqref{eq:REstimate} and \eqref{eq:BEstimate} into inequality \eqref{eq:2DIntIneq} gives
\begin{multline}\label{eq:LRGronEstimate}
\E(t)\, \lesssim\,\left( 1 + \int_0 \E(\tau)\, {\rm d} \tau \right) \\
\times\left\{ \E(0)\,e^{ct\|R_0\|_{L^\infty}} \,+\,\big\|\big(R_0, b_0\big)\big\|_{B^1_{\infty, 1}}\int_0^t\E(\tau)\,
\exp \left( \int_0^\tau \big\| \big(\al(s), \bt(s)\big)\big\|_{B^2_{\infty, 1}}\, {\rm d}s \right)\,{\rm d} \tau \right\}\,.
\end{multline}

\subsubsection*{Step 2: bounding the higher order norms.}

Inequality \eqref{eq:LRGronEstimate} presents an apparent one derivative loss. The key point, now, is to find a way to write $B^2_{\infty, 1}$ bounds for $(\al, \bt)$ by using only the function $\E(t)$.

For simplicity of notation, let us introduce the quantity
$$
\H(t)\,:=\,\big\|R(t)\big\|_{B^2_{\infty,1}}\,+\,\big\|\big(\al(t),\bt(t)\big)\big\|_{L^2\cap B^2_{\infty,1}}
$$
and notice that, by virtue of \eqref{est:equiv-norm}, we have
\begin{equation} \label{est:H_first}
\H(t)\,\sim\,\|R(t)\|_{B^2_{\infty, 1}}\, +\, \big\|\big(\al(t), \bt(t)\big)\big\|_{L^2}\, +\, \big\|\big(X(t), Y(t)\big)\big\|_{B^1_{\infty, 1}}\,.
\end{equation}

The function $\H(t)$ is the higher regularity quantity which we wish to estimate by $\E(t)$. For this, the starting point is
inequality \eqref{est:cont-crit}, which was used to prove the continuation criterion, where we take $s=2$ and $r=1$. Inserting \eqref{est:a-b_inf}, \eqref{est:R_inf} and \eqref{est:DR}
directly therein, we are led to
\begin{multline*}
\left\|R(t)\right\|_{B^2_{\infty,1}}+\big\| \big(X(t), Y(t)\big) \big\|_{B^{1}_{\infty,1}}\,\lesssim\,\H(0)\,+\,
\int^t_0\Big(\| \nabla \al \|_{L^\infty} + \| \nabla \bt \|_{L^\infty} + \|R_0\|_{L^{\infty}} \\
+\|\nabla R_0\|_{L^{\infty}}\,e^{C\int^\t_0\|\nabla(\al,\bt)(s)\|_{L^\infty}{\rm d}s}\,+\,\big\|\al_0 + \bt_0 \|_{L^2}\,e^{c\t\|R_0\|_{L^\infty}} \Big)\,
\H(\t) {\rm d} \tau\,.
\end{multline*}
where we have used also \eqref{est:H_first} to control the initial data and the $B^2_{\infty,1}$ norm of $(R,\al,\bt)$ appearing inside the integral. 
From this inequality and \eqref{est:H_first} again, we deduce
\begin{equation*}
\H(t)\,\lesssim\,e^{ct\|R_0\|_{L^\infty}}\H(0) +\int_0^t \H(\tau) \left\{ \E(\tau) +
\left(\|R_0\|_{B^1_{\infty, 1}}+\big\|\al_0+\bt_0\big\|_{L^2}\right)\,e^{C \int_0^\tau \E(s) {\rm d}s}  \right\} {\rm d} \tau\,.
\end{equation*}
Notice that the first exponential factor multiplies only $\H(0)$, since it appears from the control of the $L^2$ norm of $(\al,\bt)$. For the exponential factor appearing inside the integral, instead, we have used the following trick: $t\|R_0\|_{L^\infty}\,=\,\int^t_0\|R(\t)\|_{L^\infty}{\rm d}\t\,\leq\,\int^t_0\E(\t){\rm d}\t$.

In the end,  applying Gr\"onwall's lemma to the previous estimate gives the upper bound
\begin{equation}\label{eq:HRGronEstimate}
\H(t)\,\lesssim\,e^{ct\|R_0\|_{L^\infty}}\,\H(0)\,\exp\left\{ C\int_0^t \left[ \E(\tau) +
\left(\|R_0\|_{B^1_{\infty, 1}}+\big\|\al_0+\bt_0\big\|_{L^2}\right)\,e^{C \int_0^\tau \E(s) {\rm d}s} \right]\, {\rm d} \tau  \right\}\,.
\end{equation}

\subsubsection*{Step 3: end of the proof.}
All that remains to do is to use inequalities \eqref{eq:LRGronEstimate} and \eqref{eq:HRGronEstimate} to find bounds on $\E(t)$ on a good time interval. With that idea in mind, define
\begin{equation*}
T^*\,:=\,\sup \left\{ T\,>\,0 \, \Big| \quad \big\| \big(R_0, b_0\big) \big\|_{B^1_{\infty, 1}}\, \int_0^T \E(\tau) \exp \left( \int_0^\tau \H(s) {\rm d}s \right)\, {\rm d} \tau\,\leq\,
\E(0)\,e^{c\H(0)T}\right\}     
\end{equation*}

To simplify notations in the next computations, we introduce the functions
$$
f(t)\,:=\,e^{c\H(0)t}
\qquad\mbox{ and }\qquad F(t)\,:=\,\int^t_0f(\t)\,{\rm d}\t\,=\,\frac{1}{c\,\H(0)}\left(e^{c\H(0)t}-1\right)\,.
$$
Using the definition of $T^*$, from \eqref{eq:LRGronEstimate} and Gr\"onwall's lemma, it is straightforward to deduce, for all times $t\in[0,T^*]$, the bounds
\begin{equation}\label{eq:EstimateForE}
\int_0^t \E(\tau)\, {\rm d} \tau \,\leq\, e^{C\E(0)F(t)}-1 \qquad \text{ and } \qquad \E(t) \,\leq\, C\, \E(0)\,f(t)\,e^{C\E(0)F(t)}\,.
\end{equation}
We now employ these estimates in \eqref{eq:HRGronEstimate}: for all $t\in[0,T^*]$, we infer
\begin{align*}
\H(t)\,&\leq\, C\,f(t)\,\H(0)\,\exp \bigg\{ C\,\left(e^{C\E(0)F(t)}-1\right) \\
&\qquad\qquad\qquad\qquad\qquad
+\,C\left(\|R_0\|_{B^1_{\infty, 1}}+\big\|\al_0+\bt_0\big\|_{L^2}\right)\int_0^t \exp\Big( e^{C\E(0)F(\t)} - 1 \Big){\rm d} \tau  \bigg\} \\
&\leq\, C\,f(t)\,\H(0)\,\exp \bigg\{ C\,\left(e^{C\H(0)F(t)}-1\right) \\
&\qquad\qquad\qquad\qquad\qquad
+\,C\left(\|R_0\|_{B^1_{\infty, 1}}+\big\|\al_0+\bt_0\big\|_{L^2}\right)\int_0^t \exp\Big( e^{C\H(0)F(\t)} - 1 \Big){\rm d} \tau  \bigg\}\,.
\end{align*}
After noticing that $f(t)\geq1$ and $F(t)\geq0$, we can compute
\begin{align*}
 \int_0^t \exp\Big( e^{C\H(0)F(\t)} - 1 \Big){\rm d} \tau\,&\leq\,\frac{1}{C\,\H(0)}\int^t_0C\,\H(0)\,f(\t)\,e^{C\H(0)F(\t)} \exp\Big( e^{C\H(0)F(\t)} - 1 \Big){\rm d} \tau \\
 &=\,\frac{1}{C\,\H(0)}\,\left(\exp\Big( e^{C\H(0)F(t)} - 1 \Big)\,-\,1\right)\,.
\end{align*}
Observing that $x\leq e^x-1$ for $x\geq0$ and that
$$
\frac{\|R_0\|_{B^1_{\infty, 1}}+\big\|\al_0+\bt_0\big\|_{L^2}}{\H(0)}\,\leq\,1\,,
$$
we finally find (take $x=e^{C\H(0)F(t)}-1$ in the previous estimate for $\H$) the inequality
\begin{equation*}
 \H(t)\,\leq\, C\,f(t)\,\H(0)\,\exp\bigg\{C\,\Big(\exp\left(e^{C\H(0)F(t)}-1\right)\,-\,1\Big)\bigg\}\,,
\end{equation*}
which holds true for all $t\in[0,T^*]$.

It remains to find a control on the integral of $\H(t)$. For this, we can use the same trick as above of introducing a total derivative in the inequalities.
Indeed, after observing that both $e^{C\H(0)F(t)}\geq1$ and $\exp\big(e^{C\H(0)F(t)}-1\big)\geq1$, we can compute
\begin{align*}
\int^t_0\H(\t)\,{\rm d}\t\,&\leq\,C\,\H(0)\int^t_0f(\t)\,\exp\bigg\{C\,\Big(\exp\left(e^{C\H(0)F(\t)}-1\right)\,-\,1\Big)\bigg\}\,{\rm d}\t \\
&\leq\,C\,\H(0)\int^t_0f(\t)\,e^{C\H(0)F(\t)}\,\exp\big(e^{C\H(0)F(\t)}-1\big) \\
&\qquad\qquad\qquad\qquad\qquad\qquad\qquad\qquad
\times\exp\bigg\{C\,\Big(\exp\left(e^{C\H(0)F(\t)}-1\right)\,-\,1\Big)\bigg\}\,{\rm d}\t \\
&=\,C\,\left(\exp\bigg\{C\,\Big(\exp\left(e^{C\H(0)F(t)}-1\right)\,-\,1\Big)\bigg\}\,-\,1\right)\,.
\end{align*}
Having established this estimate, we can now employ it, together with \eqref{eq:EstimateForE}, in the inequality which defines $T^*$:
using again the trick of making a total time derivative appear, we see that, for all times $t \in [0, T^*]$, one has
\begin{equation*}
\begin{split}
&\big\| \big(R_0, B_0\big) \big\|_{B^1_{\infty, 1}} \int_0^t \E(\tau)\,\exp \left( \int_0^\tau  \H(s) {\rm d}s \right)\, {\rm d} \tau \\
&\qquad\leq\, C\,\big\| \big(R_0, B_0\big) \big\|_{B^1_{\infty, 1}} \int_0^t \E(0)f(\t) e^{C\E(0)F(\t)} \\
&\qquad\qquad\qquad\qquad\qquad\qquad\qquad
\times \exp\bigg\{C\,\left(\exp\bigg\{C\,\Big(\exp\left(e^{C\H(0)F(\t)}-1\right)\,-\,1\Big)\bigg\}\,-\,1\right) \bigg\}  {\rm d} \tau \\
&\qquad\leq\, C\,\big\| \big(R_0, B_0\big) \big\|_{B^1_{\infty, 1}} \int_0^t \H(0)f(\t) e^{C\H(0)F(\t)} \\
&\qquad\qquad\qquad\qquad\qquad\qquad\qquad
\times \exp\bigg\{C\,\left(\exp\bigg\{C\,\Big(\exp\left(e^{C\H(0)F(\t)}-1\right)\,-\,1\Big)\bigg\}\,-\,1\right) \bigg\}  {\rm d} \tau \\
&\qquad\leq\,C\,\big\| \big(R_0, B_0\big) \big\|_{B^1_{\infty, 1}}\,\bigg[\exp\bigg\{C\,\left(\exp\bigg\{C\,\Big(\exp\left(e^{C\H(0)F(t)}-1\right)\,-\,1\Big)\bigg\}\,-\,1\right) \bigg\}\,-\,1\bigg]\,.
\end{split}
\end{equation*}
On the other hand, by definition of $T^*$ and continuity, at $t=T^*$ we must have
$$
\big\| \big(R_0, B_0\big) \big\|_{B^1_{\infty, 1}} \int_0^{T^*} \E(\tau)\,\exp \left( \int_0^\tau  \H(s) {\rm d}s \right)\, {\rm d} \tau\,=\,\E(0)\,e^{cT^*\|R_0\|_{L^\infty}}\,\geq\,\E(0)\,.
$$
Thus, putting those last bounds together, we deduce that
\begin{align*}
C\,\H(0)\,F(T^*)\,\geq\,\bigg[\log\big(1\,+\,C\,\cdot\,\big)\bigg]^{\bigcirc4}\left(\frac{\E(0)}{\big\| \big(R_0, B_0\big) \big\|_{B^1_{\infty, 1}}}\right)\,.
\end{align*}
Using the definition of $F$ finally gives the claimed lower bound on $T^*$, ending in this way the proof of Theorem \ref{th:lifespan}.


\begin{rmk} \label{r:life-C}
In the case when the matrix $\mf C$ is skew-symmetric, \tsl{i.e.} it satisfies (up to a constant factor) relation \eqref{eq:MatrixC},
%
the energy is conserved, \tsl{i.e.} \eqref{eq:BasicENantiSymmetric} holds true. Thus, the exponential factors in front of the initial data $\E(0)$
and $\H(0)$ disappear from \eqref{eq:LRGronEstimate} and \eqref{eq:HRGronEstimate}, as well as from the right-hand side of the inequality defining $T^*$.
So, we can take $f(t)\equiv1$ and $F(t)=t$. In the end, we get
$$
C\,\H(0)\,T^*\,\geq\,\bigg[\log\big(1\,+\,C\,\cdot\,\big)\bigg]^{\bigcirc4}\left(\frac{\E(0)}{\big\| \big(R_0, B_0\big) \big\|_{B^1_{\infty, 1}}}\right)\,,
$$
which proves the claim stated in Remark \ref{r:lifespan} after Theorem \ref{th:lifespan}.
\end{rmk}


\addcontentsline{toc}{section}{References}
{\small

}

\end{document}